\def\O{\Omega}
\newtheorem{prob}{Problem}
\renewcommand\sp{\mathop{\mathrm{Sp}}\nolimits}
\newtheorem{remark}{Remark}[section]
\newtheorem{lemma}{Lemma}[section]
\newtheorem{corollary}{Corollary}[section]
\newtheorem{thm}{Theorem}[section]
\newtheorem{definition}{Definition}
\newcommand\bu{\boldsymbol{u}}
\newcommand\bw{\boldsymbol{w}}
\newcommand\bT{\boldsymbol{T}}
\def\CT{{\mathcal T}}
\newcommand\bPi{\boldsymbol{\Pi}}
\renewcommand\O{\Omega}
\renewcommand\sp{\mathop{\mathrm{sp}}\nolimits}
\newcommand{\vertiii}[1]{{\left\vert\kern-0.25ex\left\vert\kern-0.25ex\left\vert #1 
    \right\vert\kern-0.25ex\right\vert\kern-0.25ex\right\vert}}
\begin{document}

\title[VEM for the elasticity eigenproblem with small edges]
{A virtual element method for the elasticity spectral  problem allowing small edges}

\author{Danilo Amigo}
\address{GIMNAP-Departamento de Matem\'atica, Universidad del B\'io - B\'io, Casilla 5-C, Concepci\'on, Chile.}
\email{danilo.amigo2101@alumnos.ubiobio.cl}
\author{Felipe Lepe}
\address{GIMNAP-Departamento de Matem\'atica, Universidad del B\'io - B\'io, Casilla 5-C, Concepci\'on, Chile.}
\email{flepe@ubiobio.cl}
\thanks{The first and second authors were partially supported by
DIUBB through project 2120173 GI/C Universidad del B\'io-B\'io and ANID-Chile through FONDECYT project 11200529 (Chile).}

\author{Gonzalo Rivera}
\address{Departamento de Ciencias Exactas,
Universidad de Los Lagos, Casilla 933, Osorno, Chile.}
\email{gonzalo.rivera@ulagos.cl}
\thanks{The third author was partially supported by
 Universidad de Los Lagos through regular project R02/21.}


\subjclass[2000]{Primary 35J25, 65N15, 65N25, 65N30, 65N12,74B05}

\keywords{Elasticity equations, eigenvalue problems,  error estimates, virtual element method}

\begin{abstract}
In this paper we analyze a virtual element method for the two dimensional elasticity spectral problem allowing small edges. Under this approach, and with the aid of the theory of compact operators, we prove convergence of the proposed VEM and error estimates, where the influence of the Lam\'e constants  is presented. We present a series of numerical tests to assess the performance of the method where we analyze the effects of the Poisson ratio on the computation of the order of convergence, together with the effects of the stabilization term on the arising of spurious eigenvalues.
\end{abstract}

\maketitle

\section{Introduction}
\label{sec:intro}
The virtual element method (VEM), introduced in \cite{MR2997471} as an alternative to solve partial differential equations, has proved through time several applications to approximate accurately the solutions of different problems. In \cite{ABM2022} we find recent advances in the applications of VEM, which have been possible thanks to several works developed in fluid problems \cite{beirao2019stokes}, elasticity problems \cite{MR3033033,MR3767447,MR3881593}, eigenvalue problems \cite{MR3867390, MR4284360, MR4229296, MR4050542, MR3340705}, among others. 

The VEM results to be attractive since its nature allows to discretize with different polygonal meshes, domains that can be difficult to mesh, for example, domains with cracks or nonconvex domains. Despite  the fact that some methods as the discontinuous Galerkin method (DG) allow to consider hanging nodes, those methods consider a triangle of the mesh, for instance, as a triangle but with an extra point that is a vertex of other triangle, whereas VEM considers this fact a vertex of a new polygon,  allowing a new treatment and discretization for the geometrical domain. Of course, VEM is simple to implement and reduces computational costs compared with some classic FEM, as for example, the discretization of fourth order elliptic problem. Although these interesting advantages, the research on VEM is in ongoing process, and more general methods involving virtual spaces have emerged.

One of the hypotheses that  \cite{MR2997471} show to perform the VEM analysis is that the polygons on the mesh must have sides (or faces) which are not allowed to be arbitrary small. This assumption has been relaxed in \cite{MR3714637, MR3815658} where, according to the theory developed in these references, it  is sufficient to require the star-shapedness of the polygonal elements of the mesh. This is clearly an important advantage for the VEM, bust there is a cost to pay, since for the best of the author's knowledge,  not any problem can be discretized with this new approach.

In first place, the VEM allowing small edges are constructed for subspaces of $H^1$ and for second order elliptic differential operators. The second is related  to the regularity of the functions, since according to \cite{MR3714637}, to use only star-shaped polygons, the regularity of the solution in order to obtain approximation properties must be such that $H^{1+s}$ with $s>1/2$. This is an essential restriction to use in a clean way the small edges approach. In this same line, the regularity will depend on the differential operator, the geometry of the domain, boundary conditions, etc.. Let us remark that  the VEM allowing small edges has been applied in some problems as \cite{ALR:22,MR4359996,MR4284360,MR4461634}, and the research is in progress.

In particular we are interested in the application of VEM allowing small edges on the linear elasticity equations. This research begun with the load problem analyzed in \cite{ALR:22}, where the two dimensional elasticity problem is analyzed in a convex domain with Lipschitz boundary.  The convexity of the domain is a key ingredient for the analysis, since the regularity of the solution lies precisely in the requirements of \cite{MR3714637}. Let us remark that if mixed boundary conditions are considered, the solution has less regularity due to the reentrant angles that may appear (see \cite{MR840970}) and the small edges framework still hold, but it is necessary to assume a further condition on the geometry, which is that the number of edges of the polygons must be bounded (see \cite{MR3714637,MR3815658}). Here the price to pay is more expensive and is reflected in the error estimate of the solution, which will depend strongly on a constant depending on the mesh size. A discussion on this subject can be found in  \cite{MR3714637}. 

This is a drawback that cannot be avoided and strongly deteriorates the elasticity eigenvalue problem, since it is not possible to ensure the convergence in norm of the respective solutions operators and hence, the spectral convergence. This is the reason why only Dirichlet boundary conditions (clamped conditions in particular) are considered to perform the analysis.

The paper is organized as follows: In section \ref{sec:model} we present the spectral problem of our interest and summarize some important properties related to the solution. The continuous solution operator is presented, the regularity of the eigenfunctions, and the corresponding spectral characterization. The core of the manuscript begins in section \ref{sec:vem}, where the virtual element method is presented. In this context, we introduce the necessary ingredients to perform the analysis for the small edges scheme. We present the discrete eigenvalue problem and with the aid of the results proved in \cite{ALR:22} together with the classic theory of \cite{MR1115235}  we prove convergence in norm for the operators, spectral convergence, and error estimates for eigenvalues and eigenfunctions. Finally, in section \ref{sec:numerics} we present a complete and rigorous computational analysis of the method. This section presents the computation of eigenvalues, analysis of spurious eigenvalues with respect to the stabilization terms and its influence, and computation order of convergence  for the eigenvalues.

\section{Model problem}
\label{sec:model}
Let $\O\subset\mathbb{R}^2$ be a open, bounded and convex domain with Lipschitz boundary $\partial\O$. The model problem is the following: Find $\kappa\in\mathbb{R}$ and the displacement $\textbf{w}$ such that 
\begin{equation}
\label{loadspect}
\left\{\begin{array}{cccc}
\textbf{div}(\boldsymbol{\sigma}(\textbf{w})) &=& -\varrho \kappa \textbf{w} \quad \text{in} \; \Omega, \\
\textbf{w} &=&  0 \quad \text{on} \;  \partial \Omega, 
\end{array}\right.
\end{equation}

A variational formulation for \eqref{loadspect} is the following.
\begin{prob}
\label{eigen_continuo}
Find $(\kappa, \textbf{w}) \in \mathbb{R} \times \mathbf{H}_{0}^{1}(\Omega)$ with  $\textbf{w} \neq 0$ such that  
\begin{equation*}
a(\textbf{w}, \textbf{v}) = \kappa b(\textbf{w}, \textbf{v}) \quad \forall \textbf{v} \in \mathbf{H}_{0}^{1}(\Omega),
\end{equation*}
\end{prob} 
\noindent where the symmetric and continuous bilinear forms $a(\cdot,\cdot)$ and $b(\cdot,\cdot)$ are defined by
\begin{equation*}\label{a:spec}
a :  \mathbf{H}_{0}^{1}(\Omega) \times \mathbf{H}_{0}^{1}(\Omega) \longrightarrow \mathbb{R}, \quad a(\textbf{u}, \textbf{v}) := \displaystyle{\int_{\Omega}} \boldsymbol{\sigma}(\textbf{u}) : \boldsymbol{\varepsilon}(\textbf{v}) \quad \forall \textbf{u}, \textbf{v} \in \mathbf{H}_{0}^{1}(\Omega),
\end{equation*}
and 
\begin{equation*}\label{b}
b :  \mathbf{H}_{0}^{1}(\Omega) \times \mathbf{H}_{0}^{1}(\Omega) \longrightarrow \mathbb{R}, \quad b(\textbf{u}, \textbf{v}) := \displaystyle{\int_{\Omega} \varrho\textbf{u}\cdot\textbf{v}}  \quad \forall \textbf{u}, \textbf{v} \in \mathbf{H}_{0}^{1}(\Omega).
\end{equation*}

From  Korn's inequality, the coercivity of $a(\cdot,\cdot)$ on $\mathbf{H}^1_0(\O)$ is direct. This allows us to introduce the solution operator $\textbf{\text{T}}$, defined by \begin{equation*}
\textbf{T}:  \mathbf{H}_{0}^{1}(\Omega)\longrightarrow\mathbf{H}_{0}^{1}(\Omega), \quad
\textbf{f}\longmapsto\textbf{Tf} = \widetilde{\textbf{w}},
\end{equation*} 
where  $\widetilde{\textbf{w}}\in\mathbf{H}_{0}^{1}(\Omega)$ is the solution of the following source problem
\begin{equation*}\label{fuente}
a(\widetilde{\textbf{w}}, \textbf{v}) = b(\textbf{f}, \textbf{v}), \qquad \forall \; \textbf{v} \in \mathbf{H}_{0}^{1}(\Omega), 
\end{equation*}
which is well posed due  Lax-Milgram's lemma, implying  that  $\textbf{T}$ is well defined and satisfies
\begin{equation*}
\|\textbf{Tf}\|_{1, \Omega} = \|\widetilde{\textbf{w}}\|_{1, \Omega} \lesssim\|\textbf{f}\|_{0, \Omega},
\end{equation*} 
where the hidden constant depends on $\O$. It is easy to check that $\textbf{T}$ is selfadjoint with respect to  $a(\cdot, \cdot)$.  Moreover, from the compact embedding of  $\mathbf{H}_{0}^{1}(\Omega)$ onto  $\textbf{L}^{2}(\Omega)$ we have that  $\textbf{T}$ is compact.
\begin{remark} Let  $(\textbf{w},\kappa) \in  \mathbf{H}^{1}_{0}(\Omega)\times \mathbb{R}$ be the  solution of Problem \ref{eigen_continuo}. Then, if  $\bu \in \mathbf{H}^{1}_{0}(\Omega)$ is such that  $\textbf{T}\textbf{u} = \textbf{w}$, then for each $\textbf{v} \in \mathbf{H}^{1}_{0}(\Omega)$ there holds
\begin{center}
$a(\textbf{w}, \textbf{v}) = b(\textbf{u}, \textbf{v}) = \dfrac{\kappa}{\kappa}b(\textbf{u}, \textbf{v}) = \dfrac{1}{k}a(\textbf{u}, \textbf{v}) = a(\eta \textbf{u}, \textbf{v})$, \quad $\eta := \dfrac{1}{\kappa}$,
\end{center}
 implying  $\textbf{Tu} = \eta \textbf{u}$. Hence,  $(\textbf{w},\kappa)\in\mathbf{H}^{1}_{0}(\Omega)\times \mathbb{R}$ solves  Problem \ref{eigen_continuo} if and only if  $(\textbf{w},\eta)$ is an eigenpair of  $\textbf{T}$. 
\end{remark}

Let us recall the following regularity result (see \cite{MR840970} for instance).
\begin{lemma}
\label{reg2} 
Let $\Omega\subset\mathbb{R}^2$ be an open, bounded, and convex domain. If  $(\textbf{w},\kappa)\in\mathbf{H}_0^1(\O)\times\mathbb{R}$ solves Problem  \ref{eigen_continuo}, then $\textbf{w} \in \mathbf{H}^{2}(\Omega)$ and the following estimate holds
\begin{center}
$\|\textbf{w}\|_{2, \Omega} \lesssim \|\textbf{w}\|_{0, \Omega}$,
\end{center}
where the hidden constant depends on the eigenvalue $\kappa$.
\end{lemma}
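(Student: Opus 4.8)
The plan is to recognize the eigenfunction $\bw$ as the solution of an elasticity source problem driven by an $\mathbf{L}^2$ load, and then to invoke the classical $\mathbf{H}^2$ elliptic shift theorem for the linear elasticity operator on a convex domain.

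First I would exploit the variational identity defining Problem \ref{eigen_continuo}: since $(\bw,\kappa)$ is a solution, $a(\bw,\bv)=\kappa\,b(\bw,\bv)$ holds for every $\bv\in\mathbf{H}_0^1(\O)$. Rewriting the right-hand side as $b(\kappa\bw,\bv)$, this is exactly the weak formulation of the source problem defining $\mathbf{T}$ with data $\kappa\bw$; equivalently $\bw=\mathbf{T}(\kappa\bw)$. Thus $\bw$ is the weak solution of the boundary value problem $-\bdiv(\bsig(\bw))=\varrho\kappa\bw$ in $\O$ with $\bw=0$ on $\partial\O$, that is, problem \eqref{loadspect} with the eigenfunction reinterpreted as its own (scaled) forcing term.

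Next, since $\bw\in\mathbf{H}_0^1(\O)\subset\mathbf{L}^2(\O)$, the load $\varrho\kappa\bw$ belongs to $\mathbf{L}^2(\O)$. The remaining step is to apply the global $\mathbf{H}^2$-regularity result for the linear elasticity system with homogeneous Dirichlet boundary conditions on a convex domain, the shift theorem cited in \cite{MR840970}. This delivers $\bw\in\mathbf{H}^2(\O)$ together with the a priori bound $\|\bw\|_{2,\O}\lesssim\|\varrho\kappa\bw\|_{0,\O}$. Absorbing the fixed density $\varrho$ and the eigenvalue $\kappa$ into the hidden constant, which therefore depends on $\kappa$, I would conclude $\|\bw\|_{2,\O}\lesssim\|\bw\|_{0,\O}$, as claimed.

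The main obstacle is precisely this regularity step: full $\mathbf{H}^2$ regularity for a second-order elliptic \emph{system} is delicate on merely Lipschitz domains and genuinely fails in the presence of reentrant corners, where only $\mathbf{H}^{1+s}$ with $s\le 1/2$ can be expected. Convexity is exactly the geometric hypothesis that excludes reentrant angles and restores the full gain of two derivatives; the dependence of the regularity constant on the Lamé parameters is also concealed at this point. Since the estimate is borrowed from \cite{MR840970}, the actual work reduces to checking that the convexity assumption on $\O$ places the problem within the hypotheses of that reference.
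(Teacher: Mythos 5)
Your proposal is correct and follows the same route the paper intends: the paper gives no detailed proof, simply recalling the result from \cite{MR840970}, and the natural justification is exactly yours --- view $\bw=\mathbf{T}(\kappa\bw)$ as the solution of the elasticity source problem with load $\varrho\kappa\bw\in\mathbf{L}^2(\O)$ and apply the $\mathbf{H}^2$ shift theorem for the Dirichlet elasticity system on a convex domain. Your closing remarks on why convexity is the essential hypothesis, and on the hidden dependence on $\kappa$ and the Lam\'e parameters, are accurate and consistent with the paper's discussion.
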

We end this section with the spectral characterization of  $\textbf{T}$
\begin{thm}  
The spectrum of $\bT$    satisfies $\sp(\bT)=\{0\}\cup\{\mu_k\}_{k\in\mathbb{N}}$, where $\{\mu_k\}_{k\in\mathbb{N}}$ is a sequence of positive eigenvalues such that $\mu_k\rightarrow 0$ as $k\rightarrow+\infty$.
\end{thm}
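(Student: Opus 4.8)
The plan is to recognize the statement as a direct consequence of the spectral theorem for compact, self-adjoint operators on a Hilbert space, since every structural property of $\bT$ that is needed has already been established. First I would observe that $\left(\mathbf{H}_0^1(\O),\,a(\cdot,\cdot)\right)$ is a Hilbert space: by Korn's inequality the form $a(\cdot,\cdot)$ is coercive on $\mathbf{H}_0^1(\O)$ and, being continuous as well, it induces a norm equivalent to $\|\cdot\|_{1,\O}$. On this space $\bT$ is compact and self-adjoint with respect to $a(\cdot,\cdot)$, as recorded above, so the Hilbert--Schmidt theorem applies verbatim: $\sp(\bT)$ is real and at most countable, every nonzero element of $\sp(\bT)$ is an eigenvalue of finite multiplicity, and the only possible accumulation point of the eigenvalues is $0$.

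It then remains to pin down the three specific features asserted in the statement. To see that $0\in\sp(\bT)$, I would invoke infinite-dimensionality: $\mathbf{H}_0^1(\O)$ is infinite-dimensional and a compact operator on such a space cannot be boundedly invertible, so $0$ necessarily lies in the spectrum. In fact $\bT$ is injective, since $\bT\bF=\0$ forces $b(\bF,\bF)=a(\bT\bF,\bF)=0$ and hence $\bF=\0$; thus $0$ belongs to the spectrum without being an eigenvalue, which is exactly why it is displayed separately as $\{0\}$ in the characterization.

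For the positivity of the nonzero eigenvalues, let $\bu\neq\0$ satisfy $\bT\bu=\mu\bu$ with $\mu\neq0$. Testing the definition of $\bT$ with the eigenfunction itself gives $a(\bT\bu,\bu)=b(\bu,\bu)=\int_\O\varrho\,\abs{\bu}^2>0$, while $a(\bu,\bu)>0$ by coercivity; therefore $\mu=b(\bu,\bu)/a(\bu,\bu)>0$. Enumerating the nonzero eigenvalues as $\{\mu_k\}_{k\in\mathbb{N}}$, the fact that $0$ is their only accumulation point forces $\mu_k\to0$ as $k\to+\infty$, completing the characterization. I do not anticipate a genuine obstacle here: the only substantive verification is confirming the hypotheses of the spectral theorem, which are supplied by the preceding remarks, so the single new ingredient is the short positivity computation above.
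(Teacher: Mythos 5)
Your proof is correct and follows exactly the route the paper implicitly relies on: the theorem is stated there without proof as a direct consequence of the compactness and self-adjointness of $\bT$ with respect to $a(\cdot,\cdot)$, which are established in the preceding paragraphs. Your additional observations (injectivity of $\bT$ so that $0$ is in the spectrum but is not an eigenvalue, and the positivity computation $\mu=b(\bu,\bu)/a(\bu,\bu)>0$) are the standard details one would supply, and they are all accurate.
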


\section{The virtual element method}
\label{sec:vem}
In the present section we introduce the virtual element method that we consider to approximate the solution of Problem \ref{eigen_continuo}. To do this task, we will consider a more relaxed   conditions compared with those introduced in  \cite{MR2997471} for the classic VEM, where there is not possible to assume more general polygonal meshes allowing  arbitrary edges, more precisely, small edges. Hence, and inspired in  \cite{MR3714637}, if $\{\mathcal{T}_h\}_{h>0}$ represents a family of polygonal meshes to discretize $\O$,  $E\in\CT_h$ is an arbitrary element of the mesh, and  $h := \underset{E \in \mathcal{T}_{h}}{\max} \; h_{E}$ represents the mesh size,   we assume the following assumption on $\CT_h$:
\begin{itemize}
\item[\textbf{A1.}] There exists  $\gamma \in \mathbb{R}^{+}$ such that each polygon $E \in \{\mathcal{T}_{h}\}_{h>0}$ is star-shaped with respect to a ball  $B_{E}$ with center  $\textbf{x}_{E}$ and radius  $\rho_{E} \geq \gamma h_{E}$.
\end{itemize}
Let us write the bilinear form $a(\cdot, \cdot)$ and the functional $F(\cdot)$ as follows
\begin{center}
$a(\textbf{u}, \textbf{v}) = \displaystyle{\sum_{E \in \mathcal{T}_{h}} a^{E}(\textbf{u}, \textbf{v})}$ \quad where $a^{E}(\textbf{u}, \textbf{v}) := \displaystyle{\int_{E} \boldsymbol{\sigma}(\textbf{u}) : \boldsymbol{\varepsilon}(\textbf{v})}$ \quad $\forall \; \textbf{u}, \textbf{v} \in \mathbf{H}^{1}_{0}(\Omega)$,
\end{center}
\begin{center}
$b(\textbf{u}, \textbf{v}) = \displaystyle{\sum_{E \in \mathcal{T}_{h}} b^{E}(\textbf{u}, \textbf{v})}$, with $b^{E}(\textbf{u}, \textbf{v}) = \displaystyle{\int_{E} \varrho\textbf{u}\cdot\textbf{v}}$ \quad $\forall \textbf{u}, \textbf{v} \in \mathbf{H}_{0}^{1}(\Omega)$.
\end{center}

\subsection{Virtual spaces}
Now we introduce the virtual spaces of our interest. Following \cite{MR4567234} and \cite{MR3714637}, we introduce the following local spaces
\begin{center}
$\mathbb{B}_{\partial E} := \{\textbf{v}_{h} \in \boldsymbol{\mathcal{C}}^{0}(\partial E) \; : \; \textbf{v}_{h}\lvert_{e} \in \mathbb{P}_{k}(e) \; \forall e \subset \partial E\}$,
\end{center}
\begin{center}
$\boldsymbol{\mathcal{W}}_{h}^{E} := \{\textbf{v}_{h} \in \mathbf{H}^{1}(E) \; : \; \Delta \textbf{v}_{h} \in [\mathbb{P}_{k}(E)]^{2} \; \text{y} \; \textbf{v}_{h}\lvert_{\partial E} \; \in \mathbb{B}_{\partial E}\}$.
\end{center}
For each,  $E \in \{\mathcal{T}_{h}\}_{h>0}$, we introduce the projection $\boldsymbol{\Pi}_{k,E} : \boldsymbol{\mathcal{W}}_{h}^{E} \longrightarrow [\mathbb{P}_{k}(E)]^{2},$
defined for every  $\textbf{v}_{h} \in \boldsymbol{\mathcal{W}}_{h}^{E}$ as the solution of 
\begin{equation*}
\label{elastt}
\left\{\begin{array}{ccl}
\displaystyle{\int_{E} \boldsymbol{\varepsilon}(\boldsymbol{\Pi}_{k,E}\textbf{v}_{h}):\boldsymbol{\varepsilon}(\boldsymbol{p})} &=& \displaystyle{\int_{E} \boldsymbol{\varepsilon}(\textbf{v}_{h}) : \boldsymbol{\varepsilon}(\boldsymbol{p})}  \quad \forall \; \boldsymbol{p} \in [\mathbb{P}_{k}(E)]^{2}, \\
\displaystyle{\int_{E} \text{rot}(\boldsymbol{\Pi}_{k,E}\textbf{v}_{h})} &=& \displaystyle{\int_{E} \text{rot}(\textbf{v}_{h})}, \\
\displaystyle{\int_{\partial E} \boldsymbol{\Pi}_{k,E}\textbf{v}_{h}} &=& \displaystyle{\int_{\partial E} \textbf{v}_{h}}.
\end{array}\right.
\end{equation*}


We define the local virtual space by 
\begin{center}
$\boldsymbol{\mathcal{V}}_{h}^{E} := \left\lbrace \textbf{v}_{h} \in \boldsymbol{\mathcal{W}}_{h}^{E} \; : \; \displaystyle{\int_{E} \boldsymbol{p}\cdot (\textbf{v}_{h} - \boldsymbol{\Pi}_{k,E}\textbf{v}_{h}) = 0, \forall \; \boldsymbol{p} \in [\mathbb{P}_{k}(E)]^{2}/[\mathbb{P}_{k-2}(E)]^{2}} \right\rbrace$,
\end{center}
where  the space $[\mathbb{P}_{k}(E)]^{2}/[\mathbb{P}_{k-2}(E)]^{2}$ denotes the polynomials in $[\mathbb{P}_{k}(E)]^{2}$ in   which are orthogonal to  $[\mathbb{P}_{k-2}(E)]^{2}$ with respect to the $\textbf{L}^{2}(E)$ product.
We choose the same degrees of freedom as those  in \cite[Section 4.1]{MR2997471} for the local virtual space defined above.

Now we are in position to introduce the global virtual space which we define by 
\begin{equation*}
\boldsymbol{\mathcal{V}}_{h} := \{ \textbf{v}_{h} \in \mathbf{H}_{0}^{1}(\Omega) \; : \; \textbf{v}_{h}\lvert_{E} \; \in \boldsymbol{\mathcal{V}}_{h}^{E}\}.
\end{equation*}
Let us introduce the following stabilization term  $S^{E}(\cdot, \cdot)$ defined for $\mathbf{u}_{h},\mathbf{v}_{h}\in\boldsymbol{\mathcal{V}}_h$ by 
\begin{equation*}
S^{E}(\textbf{u}_{h}, \textbf{v}_{h}) := h_{E}\displaystyle{\int_{\partial E} \partial_{s}\textbf{u}_{h}\cdot\partial_{s}\textbf{v}_{h}},
\end{equation*}
which corresponds to a scaled inner product between $\partial_{s}\textbf{u}_{h}$ and $\partial_{s}\textbf{v}_{h}$ in $\textbf{L}^{2}(\partial E)$. Let us introduce the discrete bilinear form  $a_{h}(\cdot, \cdot):\boldsymbol{\mathcal{V}}_h\times \boldsymbol{\mathcal{V}}_h\rightarrow\mathbb{R}$ defined by 
\begin{equation*}
a_{h}(\textbf{u}_{h}, \textbf{v}_{h}) := \displaystyle{\sum_{E \in \mathcal{T}_{h}} \left[a^{E}(\boldsymbol{\Pi}_{k,E}\textbf{u}_{h}, \boldsymbol{\Pi}_{k,E}\textbf{v}_{h}) + S^{E}(\textbf{u}_{h} - \boldsymbol{\Pi}_{k,E}\textbf{u}_{h}, \textbf{v}_{h} - \boldsymbol{\Pi}_{k,E}\textbf{v}_{h})\right]}.
\end{equation*}

Now, the local discrete bilinear forms are the following 
\begin{equation*}
a_{h}^{E}(\textbf{u}_{h}, \textbf{v}_{h}) := a^{E}(\boldsymbol{\Pi}_{k,E}\textbf{u}_{h}, \boldsymbol{\Pi}_{k,E}\textbf{v}_{h}) + S^{E}(\textbf{u}_{h} - \boldsymbol{\Pi}_{k,E}\textbf{u}_{h}, \textbf{v}_{h} - \boldsymbol{\Pi}_{k,E}\textbf{v}_{h}) \quad \forall \textbf{u}_{h}, \textbf{v}_{h} \in \boldsymbol{\mathcal{V}}_{h}^{E}, 
\end{equation*}
and 
\begin{equation*}
b_{h}^{E}(\textbf{u}_{h}, \textbf{v}_{h}):= b^{E}(\boldsymbol{\Pi}_{k,E}^{0}\textbf{u}_{h}, \boldsymbol{\Pi}_{k,E}^{0}\textbf{v}_{h}) \quad \forall \textbf{u}_{h}, \textbf{v}_{h} \in \boldsymbol{\mathcal{V}}_{h}^{E},
\end{equation*}
Let us remark that  $b^{E}_{h}(\cdot, \cdot)$ is directly computable from the degrees of freedom.

Finally we introduce the global discrete bilinear forms as follows
\begin{equation*}
a_{h}(\textbf{u}_{h},\textbf{v}_{h}) := \displaystyle{\sum_{E \in \mathcal{T}_{h}} a_{h}^{E}(\textbf{u}_{h},\textbf{v}_{h})} \quad\text{and}\quad b_{h}(\textbf{u}_{h},\textbf{v}_{h}) := \displaystyle{\sum_{E \in \mathcal{T}_{h}} b_{h}^{E}(\textbf{u}_{h},\textbf{v}_{h})},
\end{equation*}
which allows us to define the VEM discretization of Problem \ref{eigen_continuo}.
\begin{prob}\label{discretelast}
Find $(\kappa_{h}, \textbf{w}_{h}) \in \mathbb{R} \times \boldsymbol{\mathcal{V}}_{h}$ with  $\textbf{w}_{h} \neq 0$ such that
\begin{equation*}
a_{h}(\textbf{w}_{h}, \textbf{v}_{h}) = \kappa_{h}b_{h}(\textbf{w}_{h}, \textbf{v}_{h}) \quad \forall \textbf{v}_{h} \in \boldsymbol{\mathcal{V}}_{h}.
\end{equation*}
\end{prob}

To show that $a_{h}(\cdot,\cdot)$ is coercive, we recall some results (see \cite{ALR:22} for details).

\begin{corollary}\label{cor}
Assume that \textbf{A1} holds. Then, the following estimate holds
\begin{equation*}
|\textbf{v}_{h}|_{1,E} \lesssim \max\{\lambda_{S}\mu_{S},1\}\left(h_{E}^{-1}\vertiii{\textbf{v}_{h}}_{k,E} + h_{E}^{1/2}\|\partial_{s}\textbf{v}_{h}\|_{0,\partial E}\right) \quad \forall \textbf{v}_{h} \in \boldsymbol{\mathcal{V}}_{h},
\end{equation*}
where the hidden constant depends on $\rho_{E}$ and $k$, and not on $h_{E}$.
\end{corollary}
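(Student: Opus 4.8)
The plan is to reduce this vector-valued stability bound to the scalar estimates for virtual functions that underpin the small-edges framework of \cite{MR3714637, MR3815658}, as adapted to elasticity in \cite{ALR:22}, and then to track the dependence on the material parameters. The starting point is Assumption \textbf{A1}: because $E$ is star-shaped with respect to the ball $B_E$ of radius $\rho_E \geq \gamma h_E$, every trace, Poincar\'e and inverse inequality invoked below can be written with a constant depending only on $\gamma$ (hence on $\rho_E$) and on the polynomial degree $k$, but \emph{not} on the individual edge lengths nor on $h_E$. This uniformity is exactly the property that legitimizes arbitrarily small edges, and it is what must be preserved at every step.

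The core step is a splitting of $\textbf{v}_{h} \in \boldsymbol{\mathcal{V}}_{h}^{E}$ according to its boundary trace and its interior polynomial Laplacian: I would write $\textbf{v}_{h} = \textbf{v}_{h}^{b} + \textbf{v}_{h}^{0}$, where $\textbf{v}_{h}^{b}$ is the harmonic lifting of $\textbf{v}_{h}\lvert_{\partial E}$ and $\textbf{v}_{h}^{0}$ carries the polynomial load $\Delta \textbf{v}_{h} \in [\mathbb{P}_{k}(E)]^{2}$. For the harmonic part, $|\textbf{v}_{h}^{b}|_{1,E}$ is controlled by the $H^{1/2}(\partial E)$-seminorm of its trace, which on a star-shaped element is in turn bounded by the scaled tangential derivative, producing the contribution $h_{E}^{1/2}\|\partial_{s}\textbf{v}_{h}\|_{0,\partial E}$. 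For the complementary part, the polynomial load is estimated through the degrees of freedom, yielding the inverse-scaled term $h_{E}^{-1}\vertiii{\textbf{v}_{h}}_{k,E}$. Since the constraints defining $\boldsymbol{\mathcal{W}}_{h}^{E}$ (namely $\Delta\textbf{v}_{h}\in[\mathbb{P}_{k}(E)]^{2}$ and $\textbf{v}_{h}\lvert_{\partial E}\in\mathbb{B}_{\partial E}$) decouple into the two Cartesian components, these scalar estimates may be applied to each component of $\textbf{v}_{h}$ and then summed.

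The material-dependent prefactor $\max\{\lambda_{S}\mu_{S},1\}$ is inherited from the intermediate energy estimates of \cite{ALR:22}, where the purely geometric bound on $|\textbf{v}_{h}|_{1,E}$ is obtained by routing through the stress-based local energy $a^{E}(\cdot,\cdot)$. Using $\boldsymbol{\sigma}(\textbf{v}) = 2\mu\,\boldsymbol{\varepsilon}(\textbf{v}) + \lambda\,(\div\textbf{v})\,\mathbf{I}$ together with Korn's inequality, the equivalence between $|\cdot|_{1,E}$ and $a^{E}(\cdot,\cdot)^{1/2}$ carries the continuity and coercivity constants of the elasticity form, which accounts for the dependence on $\lambda_{S}$ and $\mu_{S}$; collecting the two contributions from the splitting then gives the stated bound.

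I expect the genuine obstacle to be the trace estimate for the harmonic part, that is, bounding $|\textbf{v}_{h}^{b}|_{H^{1/2}(\partial E)}$ by $h_{E}^{1/2}\|\partial_{s}\textbf{v}_{h}\|_{0,\partial E}$ with a constant uniform in the number of edges and immune to their possibly vanishing lengths. This is precisely the delicate point of the small-edges theory; in the present treatment it is already isolated in the lemmas recalled from \cite{MR3714637, MR3815658, ALR:22}, so rather than reproving it I would invoke those results and concentrate the argument on assembling the two pieces of the splitting and on making the $\lambda_{S},\mu_{S}$ dependence explicit.
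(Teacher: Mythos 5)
The paper itself offers no proof of Corollary \ref{cor}: it is recalled verbatim from \cite{ALR:22} under the heading ``we recall some results (see \cite{ALR:22} for details),'' so there is no in-paper argument to compare yours against line by line. Your outline --- decomposing $\textbf{v}_{h}$ into the part determined by its boundary trace and the part carrying the polynomial interior load, controlling the first by an $H^{1/2}(\partial E)$ trace bound scaled as $h_{E}^{1/2}\|\partial_{s}\textbf{v}_{h}\|_{0,\partial E}$ and the second through the interior degrees of freedom, with all constants uniform in the edge lengths thanks to \textbf{A1} --- is the standard small-edge stability argument of \cite{MR3815658,MR3714637} and is indeed the route taken in the cited source, so as a strategy it is sound.

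There is, however, one genuine inconsistency, and it sits exactly at the point this corollary is meant to clarify, namely the Lam\'e dependence. You assert both that (i) the constraints defining $\boldsymbol{\mathcal{W}}_{h}^{E}$ decouple into the two Cartesian components, so the scalar estimates apply componentwise, and that (ii) the factor $\max\{\lambda_{S}\mu_{S},1\}$ arises from routing through the energy $a^{E}(\cdot,\cdot)$ via Korn's inequality. These cannot both be operative. If the splitting is the componentwise harmonic one driven by $\Delta\textbf{v}_{h}\in[\mathbb{P}_{k}(E)]^{2}$, the form $a^{E}$ never enters and the argument delivers a constant depending only on $\gamma$ and $k$ --- a Lam\'e-free (stronger) bound in which the stated prefactor is vacuous. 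The Lam\'e constants enter genuinely only if the integration by parts is performed with the elasticity operator, i.e.\ one starts from $2\mu_{S}\|\boldsymbol{\varepsilon}(\textbf{v}_{h})\|_{0,E}^{2}\leq a^{E}(\textbf{v}_{h},\textbf{v}_{h})=-\int_{E}\textbf{div}(\boldsymbol{\sigma}(\textbf{v}_{h}))\cdot\textbf{v}_{h}+\int_{\partial E}(\boldsymbol{\sigma}(\textbf{v}_{h})\boldsymbol{n})\cdot\textbf{v}_{h}$, applies Korn on the left and the continuity of $\boldsymbol{\sigma}$ on the right --- but then the two components do \emph{not} decouple, and what comes out is the ratio of continuity to coercivity constants, $\max\{\lambda_{S}\mu_{S}^{-1},1\}$, which is precisely the constant used in the Remark that applies this corollary (the product $\lambda_{S}\mu_{S}$ in the statement appears to be a typo). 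Your write-up should commit to one of these two mechanisms and state which constant it actually produces. Separately, the clause ``the polynomial load is estimated through the degrees of freedom'' conceals the only other quantitative ingredient, the inverse estimate $\|\Delta\textbf{v}_{h}\|_{0,E}\lesssim h_{E}^{-1}|\textbf{v}_{h}|_{1,E}$ for the polynomial Laplacian and the subsequent absorption step; deferring it to the cited lemmas is legitimate, but it should be named rather than left implicit.
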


\begin{lemma}\label{corx}
The following estimate holds
\begin{equation*}
\vertiii{\textbf{v}_{h}}_{k,E}^{2} \lesssim h_{E}\displaystyle{\sum_{e \in \mathcal{E}_{E}} \|\boldsymbol{\Pi}_{k-1,e}\textbf{v}_{h}\|_{0,e}^{2}} \quad \forall \textbf{v}_{h} \in \boldsymbol{\mathcal{V}}_{h} \; \text{such that} \; \boldsymbol{\Pi}_{k,E}\textbf{v}_{h} = \textbf{0},
\end{equation*}
where the hidden constant is independent on $h_{E}$.
\end{lemma}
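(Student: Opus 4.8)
The plan is to turn the hypothesis $\boldsymbol{\Pi}_{k,E}\mathbf{v}_h=\mathbf{0}$ into a package of orthogonality identities for $\mathbf{v}_h$, and then to run a Green's-identity argument that migrates all information from the interior of $E$ to $\partial E$, where everything is a piecewise polynomial that can be read off the edge projections $\boldsymbol{\Pi}_{k-1,e}\mathbf{v}_h$. Consistently with its scaling in Corollary \ref{cor}, I read $\vertiii{\cdot}_{k,E}$ as an $\mathbf{L}^2(E)$-type norm of $\mathbf{v}_h$, so the target is essentially a bound for $\|\mathbf{v}_h\|_{0,E}^2$.

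First I would record what $\boldsymbol{\Pi}_{k,E}\mathbf{v}_h=\mathbf{0}$ buys. Substituting it into the three equations defining $\boldsymbol{\Pi}_{k,E}$ gives $\int_E\boldsymbol{\varepsilon}(\mathbf{v}_h):\boldsymbol{\varepsilon}(\mathbf{p})=0$ for all $\mathbf{p}\in[\mathbb{P}_k(E)]^2$, together with $\int_E\mathrm{rot}(\mathbf{v}_h)=0$ and $\int_{\partial E}\mathbf{v}_h=\mathbf{0}$. In addition, the defining constraint of $\boldsymbol{\mathcal{V}}_h^E$ forces the $\mathbf{L}^2(E)$-moments of $\mathbf{v}_h$ against $[\mathbb{P}_k(E)]^2/[\mathbb{P}_{k-2}(E)]^2$ to vanish, so that $\boldsymbol{\Pi}_{k,E}^0\mathbf{v}_h=\boldsymbol{\Pi}_{k-2,E}^0\mathbf{v}_h$. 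Finally I would keep in hand the structural property $\Delta\mathbf{v}_h\in[\mathbb{P}_k(E)]^2$ built into $\boldsymbol{\mathcal{W}}_h^E$, and the fact that $\mathbf{v}_h|_e\in[\mathbb{P}_k(e)]^2$ on each edge.

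Writing the $\mathbf{L}^2(E)$-orthogonal splitting $\mathbf{v}_h=\boldsymbol{\Pi}_{k,E}^0\mathbf{v}_h+(\mathbf{v}_h-\boldsymbol{\Pi}_{k,E}^0\mathbf{v}_h)$, I would estimate the two pieces separately. For the high-frequency remainder $\mathbf{v}_h-\boldsymbol{\Pi}_{k,E}^0\mathbf{v}_h$, I would test against the solution $\boldsymbol{\psi}$ of the auxiliary Dirichlet problem $-\Delta\boldsymbol{\psi}=\mathbf{v}_h-\boldsymbol{\Pi}_{k,E}^0\mathbf{v}_h$ in $E$, $\boldsymbol{\psi}=\mathbf{0}$ on $\partial E$, integrate by parts twice, and use $\Delta\mathbf{v}_h\in[\mathbb{P}_k(E)]^2$ together with the orthogonality to $[\mathbb{P}_k(E)]^2$ to cancel the interior contributions, leaving boundary terms in $\mathbf{v}_h$ and $\partial_n\mathbf{v}_h$. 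For the polynomial piece, the identity $\boldsymbol{\Pi}_{k,E}^0\mathbf{v}_h=\boldsymbol{\Pi}_{k-2,E}^0\mathbf{v}_h$ lets me represent each interior moment $\int_E\mathbf{v}_h\cdot\mathbf{p}$ with $\mathbf{p}\in[\mathbb{P}_{k-2}(E)]^2$ by writing $\mathbf{p}=\Delta\boldsymbol{\Phi}$ for some $\boldsymbol{\Phi}\in[\mathbb{P}_k(E)]^2$ (using surjectivity of $\Delta$ onto $[\mathbb{P}_{k-2}(E)]^2$) and integrating by parts, producing boundary integrals together with a term in $\Delta\mathbf{v}_h$ that is itself reduced to boundary data through the orthogonality relations above. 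I would then bound every boundary integral by $\sum_{e\in\mathcal{E}_E}\|\boldsymbol{\Pi}_{k-1,e}\mathbf{v}_h\|_{0,e}^2$, invoking trace inequalities and, since $\mathbf{v}_h|_e\in[\mathbb{P}_k(e)]^2$, scaled polynomial inverse estimates on each edge that convert $\partial_n\mathbf{v}_h$ and the top-degree edge component into lower-order edge data; the homogeneity of these estimates is what accounts for the single factor $h_E$.

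The hard part will be keeping all constants independent of $h_E$. Since \textbf{A1} only guarantees star-shapedness with respect to a ball of radius $\ge\gamma h_E$ and explicitly permits arbitrarily small edges, I cannot scale to a fixed reference element nor invoke quasi-uniformity; instead I must use star-shaped elliptic regularity for the auxiliary Dirichlet problem and the edge trace/inverse inequalities with their $h_E$-dependence tracked explicitly, the admissible constants depending only on $\gamma$ and the degree $k$ (as in the framework of \cite{ALR:22}). Collecting the two bounds and balancing the resulting powers of $h_E$ then yields $\vertiii{\mathbf{v}_h}_{k,E}^2\lesssim h_E\sum_{e\in\mathcal{E}_E}\|\boldsymbol{\Pi}_{k-1,e}\mathbf{v}_h\|_{0,e}^2$, which is the claim.
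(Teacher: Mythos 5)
A point of context first: the paper does not prove this lemma at all; it is recalled verbatim from \cite{ALR:22} (following the small--edge stability analysis of \cite{MR3714637}), so your attempt has to be measured against that argument. The decisive issue is the meaning of $\vertiii{\cdot}_{k,E}$. It is \emph{not} the $\mathbf{L}^{2}(E)$ norm of $\textbf{v}_{h}$: it is the scaled norm of the internal degrees of freedom, i.e.\ essentially $\|\boldsymbol{\Pi}^{0}_{k-2,E}\textbf{v}_{h}\|_{0,E}$, the moments of $\textbf{v}_{h}$ against $[\mathbb{P}_{k-2}(E)]^{2}$ (this is also what makes Corollary \ref{cor} a genuine norm-equivalence statement rather than an inverse inequality). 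Once this is recognized, the proof is a single integration by parts: for $\boldsymbol{p}\in[\mathbb{P}_{k-2}(E)]^{2}$ pick $\boldsymbol{q}\in[\mathbb{P}_{k}(E)]^{2}$ with $\mathbf{div}(\boldsymbol{\varepsilon}(\boldsymbol{q}))=\boldsymbol{p}$ (the Lam\'e-type operator is elliptic, hence surjective between these polynomial spaces), write $\int_{E}\textbf{v}_{h}\cdot\boldsymbol{p}=\int_{\partial E}\textbf{v}_{h}\cdot\boldsymbol{\varepsilon}(\boldsymbol{q})\boldsymbol{n}-\int_{E}\boldsymbol{\varepsilon}(\textbf{v}_{h}):\boldsymbol{\varepsilon}(\boldsymbol{q})$, kill the volume term using the first defining equation of $\boldsymbol{\Pi}_{k,E}$ together with $\boldsymbol{\Pi}_{k,E}\textbf{v}_{h}=\mathbf{0}$, and observe that $\boldsymbol{\varepsilon}(\boldsymbol{q})\boldsymbol{n}\lvert_{e}\in[\mathbb{P}_{k-1}(e)]^{2}$, so the boundary term automatically sees only $\boldsymbol{\Pi}_{k-1,e}\textbf{v}_{h}$; a scaled inverse estimate for $\|\boldsymbol{\varepsilon}(\boldsymbol{q})\boldsymbol{n}\|_{0,\partial E}$ in terms of $\|\boldsymbol{p}\|_{0,E}$ produces the single factor $h_{E}$. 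Your ``polynomial piece'' contains exactly this computation, so the right idea is present in your write-up.

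The genuine gap is in how you propose to treat the boundary terms, and in the extra branch of your argument. You plan to bound boundary integrals involving the full trace $\textbf{v}_{h}\lvert_{e}\in[\mathbb{P}_{k}(e)]^{2}$ and $\partial_{n}\textbf{v}_{h}$ by $\sum_{e}\|\boldsymbol{\Pi}_{k-1,e}\textbf{v}_{h}\|_{0,e}^{2}$ via ``scaled polynomial inverse estimates that convert the top-degree edge component into lower-order edge data.'' No such estimate exists: the degree-$k$ Legendre component of $\textbf{v}_{h}\lvert_{e}$ is $\mathbf{L}^{2}(e)$-orthogonal to the range of $\boldsymbol{\Pi}_{k-1,e}$, so the right-hand side of the lemma carries no information whatsoever about it, and no inverse inequality can manufacture that information. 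The correct argument never needs to control this component, precisely because the only boundary integrands that arise are fluxes of degree-$k$ polynomials, which have degree at most $k-1$ on each edge. For the same reason, the duality step with the auxiliary Dirichlet problem for $\textbf{v}_{h}-\boldsymbol{\Pi}^{0}_{k,E}\textbf{v}_{h}$ is both unnecessary (that component is invisible to $\vertiii{\cdot}_{k,E}$) and unworkable as stated: after integrating by parts it leaves you pairing $\partial_{n}\boldsymbol{\psi}$, which is not an edgewise polynomial of degree $\le k-1$, against the full trace of $\textbf{v}_{h}$. If you discard the $\mathbf{L}^{2}(E)$ reading of the norm and keep only the moment computation, ensuring that every boundary test function is the flux of a degree-$k$ polynomial, you recover the proof given in \cite{ALR:22}.
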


\begin{lemma}\label{corxx}
The following estimate holds
\begin{equation*}
\|\textbf{v}_{h}\|_{0,\partial E} \lesssim h_{E}\|\partial_{s}\textbf{v}_{h}\|_{0,\partial E},
\end{equation*}
for all $\textbf{v}_{h} \in \mathbb{B}_{\partial E}$ that vanishes at some point of $\partial E$, and the hidden constant depends only on $k$.
\end{lemma}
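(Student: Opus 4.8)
The statement is a one–dimensional Poincaré inequality along the closed curve $\partial E$, exploiting that $\textbf{v}_h$ vanishes at a single point. The plan is to parametrize $\partial E$ by arc length as an interval $[0,L]$, where $L=|\partial E|$ is the perimeter, choosing the origin $s=0$ to coincide with the point $p\in\partial E$ at which $\textbf{v}_h(p)=\textbf{0}$ (if $p$ lies in the interior of an edge I simply split that edge at $p$, which does not alter $\mathbb{B}_{\partial E}$). Since $\textbf{v}_h\in\mathbb{B}_{\partial E}$ is globally continuous on $\partial E$ and is a polynomial of degree $k$ on each edge, the map $s\mapsto\textbf{v}_h(s)$ is absolutely continuous with $\partial_s\textbf{v}_h\in\textbf{L}^2(\partial E)$; the directional jumps of the tangent at the vertices do not destroy this, because only the continuity of $\textbf{v}_h$ itself is used.

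Next I would invoke the fundamental theorem of calculus. Writing $\textbf{v}_h(s)=\int_0^s\partial_s\textbf{v}_h(t)\,dt$ for $s\in[0,L]$ and applying the Cauchy–Schwarz inequality,
\[
|\textbf{v}_h(s)|^2\le s\int_0^s|\partial_s\textbf{v}_h(t)|^2\,dt\le L\,\|\partial_s\textbf{v}_h\|_{0,\partial E}^2 .
\]
Integrating this bound over $s\in[0,L]$ yields $\|\textbf{v}_h\|_{0,\partial E}^2\le L^2\|\partial_s\textbf{v}_h\|_{0,\partial E}^2$, that is, $\|\textbf{v}_h\|_{0,\partial E}\le L\,\|\partial_s\textbf{v}_h\|_{0,\partial E}$. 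Everything then reduces to showing that the perimeter is controlled by the diameter, $L\lesssim h_E$.

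Finally I would establish $L\lesssim h_E$ from Assumption \textbf{A1}. Writing $\partial E$ in polar coordinates $r=r(\theta)$ centered at $\textbf{x}_E$, star–shapedness with respect to the ball $B_E$ of radius $\rho_E\ge\gamma h_E$ forces the cone with apex at a boundary point over $B_E$ to lie inside $E$, which bounds the radial slope by $|r'(\theta)|\le r(\theta)\sqrt{r(\theta)^2-\rho_E^2}/\rho_E\le r(\theta)^2/\rho_E$. Since $r\le h_E$ and $\rho_E\ge\gamma h_E$, the arc–length element satisfies $\sqrt{r^2+(r')^2}\le r+|r'|\lesssim h_E$, and integrating over $\theta\in[0,2\pi)$ gives $L\lesssim h_E$. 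Combined with the previous step this proves the claim. I expect the main obstacle to be precisely this last estimate: quantifying the visibility/cone condition produced by star–shapedness with respect to a ball and converting it into the pointwise slope bound on $r(\theta)$, while correctly accounting for the polygonal corners, where $r$ is only Lipschitz. If one instead prefers an edge–by–edge argument, mapping each edge to a reference interval and transporting the accumulated endpoint values through the continuity of $\textbf{v}_h$, a constant depending on the polynomial degree $k$ enters through the polynomial norm equivalences, which accounts for the stated dependence.
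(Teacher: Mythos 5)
Your argument is correct and is essentially the standard proof of this estimate: the paper itself does not prove Lemma~\ref{corxx} (it is recalled from \cite{ALR:22}, and ultimately from the small-edge stability analysis of \cite{MR3714637}), and the proof there is exactly your one-dimensional Poincar\'e inequality along $\partial E$ from the vanishing point, combined with the perimeter bound $|\partial E|\lesssim h_E$ obtained from the radial Lipschitz estimate that star-shapedness with respect to $B_E$ provides. The only point worth flagging is the constant: your fundamental-theorem-of-calculus step yields $\|\textbf{v}_h\|_{0,\partial E}\le |\partial E|\,\|\partial_s\textbf{v}_h\|_{0,\partial E}$ with no $k$-dependence at all, while the reduction $|\partial E|\lesssim h_E$ genuinely requires \textbf{A1} and carries a constant depending on $\gamma$ (a comb-like star-shaped polygon with small $\gamma$ has perimeter much larger than its diameter), so the resulting hidden constant depends on $\gamma$ rather than ``only on $k$'' as stated in the lemma; this is a (harmless) imprecision in the statement, not a gap in your proof.
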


\begin{remark}
Let $\textbf{v} \in \mathbf{H}^{1}(E)$ such that $\boldsymbol{\Pi}_{k,E}\textbf{v} = \textbf{0}$. Then, applying Corollary \ref{cor}, Lemmas \ref{corx} and \ref{corxx}, we derive
\begin{equation*}
|\textbf{v}|_{1,E} \lesssim \max\{\lambda_{S}\mu_{S}^{-1},1\}h_{E}^{1/2}\|\partial_{s}\textbf{v}\|_{0,\partial E},
\end{equation*}
where the hidden constant is independent on $h_{E}$. Combining this with the fact that $\boldsymbol{\Pi}_{k,E}(\textbf{v} - \boldsymbol{\Pi}_{k,E}\textbf{v}) = \textbf{0}$ and applying triangular inequality, we obtain for $\textbf{v}_{h} \in \boldsymbol{\mathcal{V}}_{h}$
\begin{equation*}
|\textbf{v}_{h}|_{1,E}^{2} \lesssim |\boldsymbol{\Pi}_{k,E}\textbf{v}_{h}|_{1,E}^{2} + |\textbf{v}_{h} - \boldsymbol{\Pi}_{k,E}\textbf{v}_{h}|_{1,E}^{2} 
\lesssim \max\{\lambda_{S}^{2}\mu_{S}^{2},\mu_{S}^{-1},1\}a_{h}^{E}(\textbf{v}_{h},\textbf{v}_{h}).
\end{equation*}
Finally, taking summation over $E \in \mathcal{T}_{h}$, we obtain
\begin{equation*}
|\textbf{v}_{h}|_{1,\O}^{2} \lesssim \max\{\lambda_{S}^{2}\mu_{S}^{2},\mu_{S}^{-1},1\}a_{h}(\textbf{v}_{h},\textbf{v}_{h}).
\end{equation*}
This show that $a_{h}(\cdot,\cdot)$ is coercive in $\boldsymbol{\mathcal{V}}_{h}$.
\end{remark}

Now, thanks to the coercivity of $a_{h}(\cdot,\cdot)$ in  $\boldsymbol{\mathcal{V}}_{h}$, Problem \ref{discretelast} is well posed and hence, 
we are allowed to  introduce the discrete solution operator $\textbf{T}_h$, defined by 
\begin{equation*}
\textbf{T}_{h}: \mathbf{H}_{0}^{1}(\Omega)\longrightarrow\boldsymbol{\mathcal{V}}_{h}  \quad
\textbf{f}\longmapsto\textbf{T}_{h}\textbf{f} = \widetilde{\textbf{w}}_{h},
\end{equation*} 
such that  $\widetilde{\textbf{w}}_{h}$ is the unique solution of the following discrete source problem: Given $\textbf{f}\in\mathbf{L}^2(\O)$, find $\widetilde{\textbf{w}}_{h}\in\boldsymbol{\mathcal{V}}_{h}$ such that 
\begin{equation*}\label{fuentedisc}
a_{h}(\widetilde{\textbf{w}}_{h}, \textbf{v}_{h}) = b_{h}(\textbf{f}, \textbf{v}_{h})\qquad\forall\textbf{v}_h\in\boldsymbol{\mathcal{V}}_{h}.
\end{equation*}

Observe that  $\textbf{T}_{h}$    is selfadjoint with respect to  $a_{h}(\cdot, \cdot)$ and that is well defined by Lax-Milgram's lemma. Also,   we observe that  $(\textbf{w}_{h},\kappa_h)\in\boldsymbol{\mathcal{V}}_{h}\times\mathbb{R}$ 
solves Problem \ref{discretelast} if and only if $( \textbf{w}_{h},\eta_h)$ is an eigenpair of  $\textbf{T}_{h}$, i.e., 
\begin{equation*}
\textbf{T}_{h}\textbf{w}_{h} = \eta_{h}\textbf{w}_{h}, \quad \text{with} \quad \eta_{h} = \dfrac{1}{\kappa_{h}}.
\end{equation*}

Finally we present the spectral characterization of $\textbf{T}_{h}$.
\begin{thm}\label{spectchar}
The spectrum of  $\textbf{T}_{h}$ consists in  $M_{h} := \dim(\boldsymbol{\mathcal{V}}_{h})$ eigenvalues  with a certain multiplicity. Moreover, all these eigenvalues are real positive numbers. 
\end{thm}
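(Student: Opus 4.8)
The plan is to reduce the statement to a finite-dimensional spectral problem and then invoke the spectral theorem for selfadjoint operators. First I would observe that, by construction, $\textbf{T}_{h}$ maps $\mathbf{H}_{0}^{1}(\Omega)$ into $\boldsymbol{\mathcal{V}}_{h}$, so its range is contained in the $M_{h}$-dimensional space $\boldsymbol{\mathcal{V}}_{h}$. Consequently every nonzero eigenvalue $\eta_{h}$ has its eigenfunction in $\boldsymbol{\mathcal{V}}_{h}$, since $\textbf{w}_{h} = \eta_{h}^{-1}\textbf{T}_{h}\textbf{w}_{h} \in \boldsymbol{\mathcal{V}}_{h}$, and it therefore suffices to analyze the restriction $\textbf{T}_{h}\colon \boldsymbol{\mathcal{V}}_{h} \to \boldsymbol{\mathcal{V}}_{h}$, an endomorphism of a finite-dimensional vector space.

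Next I would record that the Remark establishing coercivity of $a_{h}(\cdot,\cdot)$ shows $a_{h}(\cdot,\cdot)$ is coercive on $\boldsymbol{\mathcal{V}}_{h}$; together with its symmetry this makes $a_{h}(\cdot,\cdot)$ an inner product on $\boldsymbol{\mathcal{V}}_{h}$. Moreover $\textbf{T}_{h}$ is selfadjoint with respect to this inner product: for $\textbf{u}_{h}, \textbf{v}_{h} \in \boldsymbol{\mathcal{V}}_{h}$ one has
\[
a_{h}(\textbf{T}_{h}\textbf{u}_{h}, \textbf{v}_{h}) = b_{h}(\textbf{u}_{h}, \textbf{v}_{h}) = b_{h}(\textbf{v}_{h}, \textbf{u}_{h}) = a_{h}(\textbf{T}_{h}\textbf{v}_{h}, \textbf{u}_{h}) = a_{h}(\textbf{u}_{h}, \textbf{T}_{h}\textbf{v}_{h}),
\]
using the symmetry of $b_{h}(\cdot,\cdot)$ and of $a_{h}(\cdot,\cdot)$. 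By the spectral theorem for selfadjoint operators on a finite-dimensional inner-product space, $\textbf{T}_{h}$ then possesses exactly $M_{h}$ real eigenvalues, counted with multiplicity, together with an $a_{h}$-orthonormal basis of eigenfunctions. This already yields the asserted count and the reality of the spectrum.

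It remains to prove positivity. For an eigenpair $(\eta_{h}, \textbf{w}_{h})$ with $\textbf{w}_{h} \neq \textbf{0}$, testing the definition of $\textbf{T}_{h}$ with $\textbf{v}_{h} = \textbf{w}_{h}$ gives $\eta_{h}\, a_{h}(\textbf{w}_{h}, \textbf{w}_{h}) = a_{h}(\textbf{T}_{h}\textbf{w}_{h}, \textbf{w}_{h}) = b_{h}(\textbf{w}_{h}, \textbf{w}_{h})$, whence the Rayleigh-quotient identity
\[
\eta_{h} = \frac{b_{h}(\textbf{w}_{h}, \textbf{w}_{h})}{a_{h}(\textbf{w}_{h}, \textbf{w}_{h})}.
\]
Coercivity makes the denominator strictly positive, and since $b_{h}(\textbf{w}_{h}, \textbf{w}_{h}) = \sum_{E \in \mathcal{T}_{h}} \int_{E} \varrho\,|\boldsymbol{\Pi}_{k,E}^{0}\textbf{w}_{h}|^{2} \geq 0$ we obtain $\eta_{h} \geq 0$ at once. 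The delicate point, and the step I expect to be the main obstacle, is to upgrade this to a strict inequality, i.e. to show that $b_{h}(\cdot,\cdot)$ is positive definite on $\boldsymbol{\mathcal{V}}_{h}$, equivalently that no nonzero $\textbf{w}_{h} \in \boldsymbol{\mathcal{V}}_{h}$ satisfies $\boldsymbol{\Pi}_{k,E}^{0}\textbf{w}_{h} = \textbf{0}$ on every $E$. Note that elementwise $\boldsymbol{\Pi}_{k,E}^{0}$ cannot be injective by a dimension count, so this is genuinely a global argument: I would use the degrees of freedom of $\boldsymbol{\mathcal{V}}_{h}^{E}$ together with the enhancement constraint defining $\boldsymbol{\mathcal{V}}_{h}^{E}$ to recover the internal moments of $\textbf{w}_{h}$ from $\boldsymbol{\Pi}_{k,E}^{0}\textbf{w}_{h}$, and combine this with the coercivity estimate of the Remark to control the boundary contribution, so that simultaneous vanishing of all $\boldsymbol{\Pi}_{k,E}^{0}\textbf{w}_{h}$ forces $\textbf{w}_{h} = \textbf{0}$. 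Once $b_{h}(\cdot,\cdot)$ is positive definite, the Rayleigh quotient is strictly positive for every eigenfunction, so all $M_{h}$ eigenvalues are positive. Equivalently, the entire argument can be cast as the generalized matrix eigenproblem $\mathbf{A}\mathbf{x} = \kappa_{h}\mathbf{B}\mathbf{x}$ with $\mathbf{A}$ symmetric positive definite and $\mathbf{B}$ symmetric positive definite, whose eigenvalues $\kappa_{h}$, and hence $\eta_{h} = 1/\kappa_{h}$, are real and positive.
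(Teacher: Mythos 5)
The paper states Theorem \ref{spectchar} without any proof, so there is no written argument to measure you against; your outline is the standard one (restrict $\textbf{T}_{h}$ to the finite-dimensional invariant subspace $\boldsymbol{\mathcal{V}}_{h}$, observe selfadjointness with respect to the inner product induced by the symmetric coercive form $a_{h}(\cdot,\cdot)$, apply the finite-dimensional spectral theorem, and read off the sign from the Rayleigh quotient). The parts of your argument concerning the count $M_{h}$ and the reality of the eigenvalues are complete and correct, and you are right that the only genuinely delicate point is the strict positivity, equivalently the positive definiteness of $b_{h}(\cdot,\cdot)$ on $\boldsymbol{\mathcal{V}}_{h}$. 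In identifying this you are in fact being more careful than the paper itself.

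However, the repair you sketch for that step would not work, so the gap remains open in your write-up. If $\boldsymbol{\Pi}_{k,E}^{0}\textbf{v}_{h}=\boldsymbol{0}$ on every $E$, then indeed all internal moment degrees of freedom of $\textbf{v}_{h}$ vanish, and the enhancement constraint further yields $\int_{E}\boldsymbol{p}\cdot\boldsymbol{\Pi}_{k,E}\textbf{v}_{h}=0$ for $\boldsymbol{p}\in[\mathbb{P}_{k}(E)]^{2}/[\mathbb{P}_{k-2}(E)]^{2}$; but none of this constrains the boundary degrees of freedom, and the coercivity estimate of the Remark bounds $|\textbf{v}_{h}|_{1,\Omega}$ by $a_{h}(\textbf{v}_{h},\textbf{v}_{h})$, which carries no information whatsoever about the kernel of $b_{h}$ --- so ``combining with the coercivity estimate to control the boundary contribution'' has nothing to act on. As you note yourself, $\boldsymbol{\Pi}_{k,E}^{0}$ fails to be injective elementwise once a polygon has enough vertices, so triviality of the global kernel is a nontrivial mesh-dependent question: it is immediate on triangles with $k=1$ (there $\boldsymbol{\mathcal{V}}_{h}^{E}=[\mathbb{P}_{1}(E)]^{2}$ and $\boldsymbol{\Pi}_{1,E}^{0}$ is the identity), but for general polygonal meshes satisfying only \textbf{A1} no argument is supplied, and I do not see one following from the tools in the paper. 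Absent such an argument, the honest conclusion of your proof is the weaker statement that the $M_{h}$ eigenvalues are real and nonnegative, with $\eta_{h}=0$ occurring with multiplicity $\dim\ker\mathbf{B}$ (equivalently, the pencil $\mathbf{A}\mathbf{x}=\kappa_{h}\mathbf{B}\mathbf{x}$ has that many infinite eigenvalues); to get the theorem as stated you must either prove $\ker\mathbf{B}=\{\boldsymbol{0}\}$ or add it as a hypothesis.
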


\subsection{Technical results}
Now we will summarize some technical results that allows us to perform the analysis. All these results are available in \cite{ALR:22}  for the source problem, but are 
also valid for the spectral problem. The relevance of the forthcoming results yields in the fact that all the estimates show a clear dependence on the Lam\'e coefficient $\lambda_S$.
\begin{lemma}\label{lema43}
Assume that $\textbf{u} \in \mathbf{H}^{\ell + 1}(\Omega)$, $1 \leq \ell \leq k$. Then, there holds
\begin{equation*}
\displaystyle{\sum_{E \in \mathcal{T}_{h}} S^{E}(\textbf{u}_{h} - \boldsymbol{\Pi}_{k,E}\textbf{u}_{h},\textbf{u}_{h} - \boldsymbol{\Pi}_{k,E}\textbf{u}_{h})} \lesssim \mathcal{C}(\lambda_{S},\mu_{S})h^{2\ell}|\textbf{u}|_{\ell + 1,\Omega}^{2},
\end{equation*}
where $\mathcal{C}(\lambda_{S},\mu_{S})$ is a positive constant depending on the Lam\'e coefficients, and is as in \cite[Lemma 3.16]{ALR:22}.
\end{lemma}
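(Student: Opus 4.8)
The plan is to treat the stabilization term as a scaled tangential $\textbf{L}^2$-seminorm on $\partial E$ and to reduce it, via the polynomial-reproduction property of $\boldsymbol{\Pi}_{k,E}$, to standard polynomial-approximation and virtual interpolation estimates, while scrupulously avoiding any inverse inequality applied edge by edge (which would blow up under arbitrarily small edges). Throughout, $\textbf{u}_h\in\boldsymbol{\mathcal{V}}_h$ denotes the virtual element interpolant of $\textbf{u}$, so that the sum in the statement is the stabilization of the interpolation error.

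First I would unfold $S^E(\textbf{v}_h,\textbf{v}_h)=h_E\|\partial_s\textbf{v}_h\|_{0,\partial E}^2$ with $\textbf{v}_h=\textbf{u}_h-\boldsymbol{\Pi}_{k,E}\textbf{u}_h$. Since the strain-based projection $\boldsymbol{\Pi}_{k,E}$ reproduces every $\textbf{q}\in[\mathbb{P}_k(E)]^2$, for any polynomial $\textbf{u}_\pi\in[\mathbb{P}_k(E)]^2$ one has $\textbf{u}_h-\boldsymbol{\Pi}_{k,E}\textbf{u}_h=(\textbf{u}_h-\textbf{u}_\pi)-\boldsymbol{\Pi}_{k,E}(\textbf{u}_h-\textbf{u}_\pi)$; I would take $\textbf{u}_\pi$ to be the averaged Taylor (best) polynomial approximation of $\textbf{u}$ on $E$. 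A triangle inequality on $\partial E$ then splits the seminorm into a term in $\textbf{u}_h-\textbf{u}_\pi$ and a term in the polynomial $\boldsymbol{\Pi}_{k,E}(\textbf{u}_h-\textbf{u}_\pi)$.

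For the first term I would write $\textbf{u}_h-\textbf{u}_\pi=(\textbf{u}_h-\textbf{u})+(\textbf{u}-\textbf{u}_\pi)$ and bound each contribution in the tangential boundary norm: the interpolation part $\textbf{u}_h-\textbf{u}$ by the boundary interpolation estimate valid under \textbf{A1}, and the polynomial part $\textbf{u}-\textbf{u}_\pi$ by a scaled trace inequality combined with the approximation rate of $\textbf{u}_\pi$, producing $h_E^{\ell}|\textbf{u}|_{\ell+1,E}$. For the projected term, since $\boldsymbol{\Pi}_{k,E}(\textbf{u}_h-\textbf{u}_\pi)$ is a polynomial, I would pass from $h_E^{1/2}\|\partial_s\boldsymbol{\Pi}_{k,E}(\cdot)\|_{0,\partial E}$ to the interior quantity $|\boldsymbol{\Pi}_{k,E}(\textbf{u}_h-\textbf{u}_\pi)|_{1,E}$ through a trace inequality on the whole boundary $\partial E$ and a global polynomial inverse estimate on $E$ — both anchored to the ball $B_E$ of \textbf{A1}, never to individual edges — and then invoke the $H^1$-stability of $\boldsymbol{\Pi}_{k,E}$ to return to $|\textbf{u}_h-\textbf{u}_\pi|_{1,E}$, which is again controlled by $h_E^\ell|\textbf{u}|_{\ell+1,E}$ after reinserting $\textbf{u}$. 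Summing over $E\in\mathcal{T}_h$ and using the regularity $\textbf{u}\in\mathbf{H}^{\ell+1}(\Omega)$ yields the global rate $h^{2\ell}|\textbf{u}|_{\ell+1,\Omega}^2$; the Lam\'e-dependent factor $\mathcal{C}(\lambda_S,\mu_S)$ is accumulated through the projection and the norm equivalences linking the strain seminorm to the stress form $a^E$, exactly as in Corollary \ref{cor} and the remark preceding this subsection.

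I expect the main obstacle to be the projected term: one must bound a tangential boundary derivative of a polynomial by interior quantities with a constant independent of $h_E$ and of the edge lengths. The temptation is to use an edge-by-edge inverse inequality, which is illegitimate here; instead the trace and inverse estimates must be deployed on $\partial E$ as a whole and justified through star-shapedness \textbf{A1}, precisely the mechanism that makes the small-edge framework work. Keeping these constants uniform in $h_E$ while exposing only the Lam\'e dependence is the delicate core of the argument; the remaining polynomial-approximation and interpolation estimates are routine and mirror \cite[Lemma 3.16]{ALR:22}.
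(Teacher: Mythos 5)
First, note that the paper does not actually prove this lemma: it is imported verbatim from \cite[Lemma 3.16]{ALR:22} (``all these results are available in \cite{ALR:22} for the source problem''), so the only possible comparison is with the argument of that reference. Your core mechanism --- reducing $(I-\boldsymbol{\Pi}_{k,E})\textbf{u}_{h}$ to $(I-\boldsymbol{\Pi}_{k,E})(\textbf{u}_{h}-\textbf{u}_{\pi})$ by polynomial reproduction, and handling the projected term with a whole-boundary trace inequality plus an inverse estimate for polynomials anchored to the ball $B_{E}$ of \textbf{A1} rather than edge by edge --- is exactly the right technology for the small-edge setting and is consistent with the style of \cite{ALR:22,MR3714637}.

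The genuine gap is your opening assumption that $\textbf{u}_{h}$ is the virtual element interpolant of $\textbf{u}$. In this paper the lemma is invoked with $\textbf{u}_{h}$ the \emph{discrete solution}: in the proof of the quadratic eigenvalue estimate it is applied to the discrete eigenfunction $\textbf{w}_{h}$, and in the $\textbf{L}^{2}$ duality argument to $\textbf{T}_{h}\textbf{f}$. Your sketch therefore proves the (easier) interpolant version and is missing the bridge $\textbf{u}_{h}-\boldsymbol{\Pi}_{k,E}\textbf{u}_{h}=(I-\boldsymbol{\Pi}_{k,E})(\textbf{u}_{h}-\textbf{u}_{I})+(I-\boldsymbol{\Pi}_{k,E})\textbf{u}_{I}$, whose first piece requires an inverse-type bound of the stabilization seminorm $h_{E}\|\partial_{s}\cdot\|_{0,\partial E}^{2}$ by $|\cdot|_{1,E}^{2}$ on virtual functions (itself one of the delicate estimates in the small-edge regime) combined with the a priori estimate of Theorem \ref{teorema42}. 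This missing step is also where the constant $\mathcal{C}(\lambda_{S},\mu_{S})$ actually originates: as you yourself observe, neither $S^{E}$ nor $\boldsymbol{\Pi}_{k,E}$ involves the Lam\'e coefficients, so the interpolant version you sketch would come out essentially Lam\'e-independent, which is inconsistent with the statement; the dependence enters through $K(\lambda_{S},\mu_{S})$ in the error estimate for $\textbf{u}-\textbf{u}_{h}$ and through the Korn/norm-equivalence constants of Corollary \ref{cor}.
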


\begin{thm}\label{teorema42}
Assume that  $\textbf{u}\in \mathbf{H}^{\ell + 1}(\Omega)$ for  $1 \leq \ell \leq k$. Then, there holds 
\begin{equation*}
|\textbf{u} - \textbf{u}_{h}|_{1,\Omega} + |\textbf{u} - \boldsymbol{\Pi}_{k,h}\textbf{u}_{h}|_{1,h} + |\textbf{u} - \boldsymbol{\Pi}_{k,h}^{0}\textbf{u}|_{1,h} \lesssim K(\lambda_{S},\mu_{S})h^{\ell}|\textbf{u}|_{\ell+1, \Omega},
\end{equation*}
where $K(\lambda_{S},\mu_{S})$ is a positive constant depending on the Lam\'e coefficients, and is as in \cite[Theorem 3.2]{ALR:22}.
\end{thm}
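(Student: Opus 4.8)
The plan is to follow the standard virtual element convergence argument, but keeping explicit track of the dependence on the Lam\'e coefficients $\lambda_{S}$ and $\mu_{S}$ exposed in Corollary \ref{cor}, Lemmas \ref{corx}--\ref{corxx} and Lemma \ref{lema43}. Throughout, $\textbf{u}$ denotes the solution of the source problem and $\textbf{u}_{h}=\textbf{T}_{h}\textbf{f}$ its virtual element approximation for the same data. First I would split the error through the virtual element interpolant: let $\textbf{u}_{I}\in\boldsymbol{\mathcal{V}}_{h}$ be the interpolant of $\textbf{u}$ associated with the degrees of freedom of Section~\ref{sec:vem}. Invoking the interpolation estimate valid in the small edges framework (the elasticity analogue of the bounds in \cite{MR3714637}), one has $|\textbf{u}-\textbf{u}_{I}|_{1,\Omega}\lesssim h^{\ell}|\textbf{u}|_{\ell+1,\Omega}$, so by the triangle inequality it suffices to control $|\textbf{u}_{I}-\textbf{u}_{h}|_{1,\Omega}$.

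Second, I would use the coercivity of $a_{h}(\cdot,\cdot)$ established in the Remark preceding Theorem \ref{spectchar}, whose constant is $\max\{\lambda_{S}^{2}\mu_{S}^{2},\mu_{S}^{-1},1\}$, to write $|\textbf{u}_{I}-\textbf{u}_{h}|_{1,\Omega}^{2}\lesssim a_{h}(\textbf{u}_{I}-\textbf{u}_{h},\textbf{u}_{I}-\textbf{u}_{h})$. Testing against $\textbf{v}_{h}:=\textbf{u}_{I}-\textbf{u}_{h}$, inserting the discrete source identity $a_{h}(\textbf{u}_{h},\textbf{v}_{h})=b_{h}(\textbf{f},\textbf{v}_{h})$ and adding and subtracting the continuous forms, the quantity $a_{h}(\textbf{u}_{I}-\textbf{u}_{h},\textbf{v}_{h})$ splits into a bilinear-form consistency term $a_{h}(\textbf{u}_{I},\textbf{v}_{h})-a(\textbf{u},\textbf{v}_{h})$ and a right-hand-side consistency term $b(\textbf{f},\textbf{v}_{h})-b_{h}(\textbf{f},\textbf{v}_{h})$.

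Third, and this is where the real work lies, I would estimate the two consistency terms elementwise. For the bilinear-form term I would insert the polynomial best approximation $\boldsymbol{\Pi}_{k,E}^{0}\textbf{u}$ and exploit the polynomial consistency of $a_{h}^{E}$, so that the term reduces to a sum of polynomial approximation errors of $\textbf{u}$ plus a stabilization contribution of the form $S^{E}(\textbf{u}_{I}-\boldsymbol{\Pi}_{k,E}\textbf{u}_{I},\textbf{v}_{h}-\boldsymbol{\Pi}_{k,E}\textbf{v}_{h})$; the latter is bounded by Cauchy--Schwarz together with Lemma \ref{lema43}, which supplies exactly the factor $\mathcal{C}(\lambda_{S},\mu_{S})h^{\ell}|\textbf{u}|_{\ell+1,\Omega}$. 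The right-hand-side term is controlled by the approximation properties of $\boldsymbol{\Pi}_{k,E}^{0}$ in the $b_{h}^{E}$ pairing. Collecting these contributions and absorbing the coercivity constant yields the bound on $|\textbf{u}-\textbf{u}_{h}|_{1,\Omega}$ with the asserted constant $K(\lambda_{S},\mu_{S})$.

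Finally, the term $|\textbf{u}-\boldsymbol{\Pi}_{k,h}^{0}\textbf{u}|_{1,h}$ is pure polynomial approximation and follows from the Bramble--Hilbert/Dupont--Scott estimates on the star-shaped elements of \textbf{A1}, giving $h^{\ell}|\textbf{u}|_{\ell+1,\Omega}$. For $|\textbf{u}-\boldsymbol{\Pi}_{k,h}\textbf{u}_{h}|_{1,h}$ I would use $|\textbf{u}-\boldsymbol{\Pi}_{k,h}\textbf{u}_{h}|_{1,h}\le|\textbf{u}-\boldsymbol{\Pi}_{k,h}\textbf{u}|_{1,h}+|\boldsymbol{\Pi}_{k,h}(\textbf{u}-\textbf{u}_{h})|_{1,h}$, bounding the first summand by polynomial approximation and the second by the $H^{1}$-stability of $\boldsymbol{\Pi}_{k,E}$ together with the already-established estimate for $|\textbf{u}-\textbf{u}_{h}|_{1,\Omega}$. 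The main obstacle is the bookkeeping in the third step: in the small edges regime the norm equivalences and inverse inequalities of Corollary \ref{cor} and Lemmas \ref{corx}--\ref{corxx} carry hidden powers of $h_{E}$, and one must verify that these cancel while the Lam\'e-dependent factors combine into a single constant $K(\lambda_{S},\mu_{S})$ that does not degenerate as the edges shrink.
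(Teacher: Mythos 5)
The paper does not actually prove this theorem: it is imported verbatim from \cite[Theorem 3.2]{ALR:22} (see the sentence opening the ``Technical results'' subsection, which states that these estimates are established there for the source problem and carry over unchanged). So there is no in-paper proof to compare against; what you have written is the standard Strang-type a priori analysis for VEM --- interpolant splitting, discrete coercivity, bilinear-form and right-hand-side consistency terms --- which is indeed the architecture of the cited proof, and your outline is structurally sound.

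Two points of bookkeeping deserve attention before this could be called complete. First, Lemma \ref{lema43} bounds the \emph{square} of the stabilization term evaluated at $\textbf{u}_{h}-\boldsymbol{\Pi}_{k,E}\textbf{u}_{h}$, so in your consistency estimate you need the factor $\mathcal{C}(\lambda_{S},\mu_{S})^{1/2}h^{\ell}$ rather than $\mathcal{C}(\lambda_{S},\mu_{S})h^{\ell}$, and more importantly you need the analogous bound for $S^{E}(\textbf{u}_{I}-\boldsymbol{\Pi}_{k,E}\textbf{u}_{I},\cdot)$, i.e.\ for the interpolant rather than for the discrete solution; this is where Corollary \ref{cor} and Lemmas \ref{corx}--\ref{corxx} actually enter, since in the small-edge regime the stabilization of the interpolation error is not a consequence of a generic inverse inequality but of the boundary-trace estimates that those results encode. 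Second, the $\H^1$-stability of $\boldsymbol{\Pi}_{k,E}$ that you invoke for the term $|\textbf{u}-\boldsymbol{\Pi}_{k,h}\textbf{u}_{h}|_{1,h}$ is immediate only in the $\boldsymbol{\varepsilon}$-seminorm (by the defining orthogonality); upgrading it to the full $|\cdot|_{1,E}$ seminorm requires a Korn-type inequality on each element whose constant must be controlled under \textbf{A1} alone, and this is precisely where a Lam\'e-independent but shape-dependent constant enters. Neither point invalidates the argument, but both are exactly the places where the ``hidden powers of $h_{E}$'' you mention at the end must be checked, so your own caveat is well placed.
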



\begin{thm}\label{teorema43}
Assume that  $\textbf{u} \in \mathbf{H}^{\ell + 1}(\Omega)$, $1 \leq \ell \leq k$. Then
\begin{equation*}
\|\textbf{u} - \textbf{u}_{h}\|_{0,\Omega} \lesssim \mathfrak{R}(\lambda_{S},\mu_{S})h^{\ell + 1}|\textbf{u}|_{\ell + 1,\Omega},
\end{equation*}
where $\mathfrak{R}(\lambda_{S},\mu_{S})$ is a positive constant depending on the Lam\'e coefficients, which is defined  in \cite[Theorem 3.3]{ALR:22}.
\end{thm}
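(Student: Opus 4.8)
The plan is to obtain the $\textbf{L}^2$ estimate by an Aubin--Nitsche duality argument, buying the extra power of $h$ from the elliptic regularity that the convexity of $\O$ provides. Write $\textbf{e} := \textbf{u} - \textbf{u}_h$ and introduce the dual problem: find $\boldsymbol{\varphi} \in \mathbf{H}_0^1(\Omega)$ such that $a(\textbf{v}, \boldsymbol{\varphi}) = b(\textbf{e}, \textbf{v})$ for all $\textbf{v} \in \mathbf{H}_0^1(\Omega)$. Since $a(\cdot,\cdot)$ is symmetric and coercive this is well posed, and because $\O$ is convex, the same elliptic regularity used in Lemma \ref{reg2} yields $\boldsymbol{\varphi} \in \mathbf{H}^2(\Omega)$ with $\|\boldsymbol{\varphi}\|_{2,\Omega} \lesssim \|\textbf{e}\|_{0,\Omega}$. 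Testing with $\textbf{v} = \textbf{e}$ and using that $b(\cdot,\cdot)$ is the $\varrho$-weighted $\textbf{L}^2(\Omega)$ inner product gives $\|\textbf{e}\|_{0,\Omega}^2 \lesssim b(\textbf{e},\textbf{e}) = a(\textbf{e}, \boldsymbol{\varphi})$, so the whole argument reduces to estimating $a(\textbf{e}, \boldsymbol{\varphi})$.

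Next I would let $\boldsymbol{\varphi}_I \in \boldsymbol{\mathcal{V}}_h$ be the virtual interpolant of $\boldsymbol{\varphi}$ and split $a(\textbf{e}, \boldsymbol{\varphi}) = a(\textbf{e}, \boldsymbol{\varphi} - \boldsymbol{\varphi}_I) + a(\textbf{e}, \boldsymbol{\varphi}_I)$. The first term is the easy one: by continuity of $a(\cdot,\cdot)$, the $\mathbf{H}^1$ estimate of Theorem \ref{teorema42} applied to $\textbf{e}$, and the standard virtual interpolation estimate $|\boldsymbol{\varphi} - \boldsymbol{\varphi}_I|_{1,\Omega} \lesssim h\,|\boldsymbol{\varphi}|_{2,\Omega}$ (the interpolation bound underlying Theorem \ref{teorema42} with $\ell = 1$), one gets $|a(\textbf{e}, \boldsymbol{\varphi} - \boldsymbol{\varphi}_I)| \lesssim K(\lambda_S,\mu_S)\, h^{\ell+1}\,|\textbf{u}|_{\ell+1,\Omega}\,\|\textbf{e}\|_{0,\Omega}$ after absorbing $\|\boldsymbol{\varphi}\|_{2,\Omega} \lesssim \|\textbf{e}\|_{0,\Omega}$.

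The second term is where the VEM inconsistency enters and constitutes the main obstacle. Since $\boldsymbol{\varphi}_I \in \boldsymbol{\mathcal{V}}_h \subset \mathbf{H}_0^1(\Omega)$ is admissible in both the continuous source identity $a(\textbf{u}, \boldsymbol{\varphi}_I) = b(\textbf{f}, \boldsymbol{\varphi}_I)$ and the discrete one $a_h(\textbf{u}_h, \boldsymbol{\varphi}_I) = b_h(\textbf{f}, \boldsymbol{\varphi}_I)$, inserting and cancelling these yields the consistency decomposition
\begin{equation*}
a(\textbf{e}, \boldsymbol{\varphi}_I) = \big[a_h(\textbf{u}_h, \boldsymbol{\varphi}_I) - a(\textbf{u}_h, \boldsymbol{\varphi}_I)\big] + \big[b(\textbf{f}, \boldsymbol{\varphi}_I) - b_h(\textbf{f}, \boldsymbol{\varphi}_I)\big].
\end{equation*}
Each bracket is then estimated elementwise. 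For $a_h - a$ I would exploit the polynomial consistency $a_h^E(\boldsymbol{p}, \cdot) = a^E(\boldsymbol{p}, \cdot)$ on $[\mathbb{P}_k(E)]^2$ to reduce both arguments to the projection residuals $\textbf{u}_h - \boldsymbol{\Pi}_{k,E}\textbf{u}_h$ and $\boldsymbol{\varphi}_I - \boldsymbol{\Pi}_{k,E}\boldsymbol{\varphi}_I$, and bound the remaining polynomial and stabilization contributions by $S^E(\cdot,\cdot)$ via Lemma \ref{lema43}; this is exactly where the Lam\'e-dependent constant $\mathcal{C}(\lambda_S,\mu_S)$ surfaces and combines with $K(\lambda_S,\mu_S)$ into $\mathfrak{R}(\lambda_S,\mu_S)$. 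The term $b - b_h$ is treated analogously, using that $b_h^E$ is built from the $\textbf{L}^2$ projection $\boldsymbol{\Pi}_{k,E}^0$ and its approximation property, so that it too contributes a factor $h^{\ell+1}|\textbf{u}|_{\ell+1,\Omega}\|\textbf{e}\|_{0,\Omega}$.

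Collecting the three contributions, dividing through by $\|\textbf{e}\|_{0,\Omega}$, and identifying the accumulated Lam\'e-dependence as $\mathfrak{R}(\lambda_S,\mu_S)$ gives the claimed bound. I expect the delicate part to be the bookkeeping of the consistency terms: securing the sharp rate $h^{\ell+1}$ rather than a suboptimal $h^{\ell}$ requires using the full strength of Lemma \ref{lema43} on \emph{both} arguments together with the $\mathbf{H}^2$ regularity of $\boldsymbol{\varphi}$, and it is there that the explicit dependence on $\lambda_S$ and $\mu_S$ must be tracked carefully to recover the constant of \cite[Theorem 3.3]{ALR:22}.
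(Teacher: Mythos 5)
Your proposal is correct and follows essentially the same route as the source: the paper itself defers the proof of this statement to \cite[Theorem 3.3]{ALR:22}, but the Aubin--Nitsche duality argument you outline --- dual problem with $\mathbf{H}^2$ regularity from convexity, splitting off the interpolant, and reducing the remainder to the consistency brackets $a_h-a$ and $b-b_h$ estimated via the projection residuals and the stabilization bounds --- is precisely the argument the paper carries out in detail for the immediately following $\textbf{L}^2$ operator-error estimate. The only point to watch is the $b-b_h$ bracket, where controlling $\|\textbf{f}-\boldsymbol{\Pi}^0_{k,E}\textbf{f}\|_{0,E}$ in terms of $|\textbf{u}|_{\ell+1,\Omega}$ alone requires the relation between $\textbf{f}$ and $\textbf{u}$ (or an explicit $\textbf{f}$-dependence in the constant), a bookkeeping detail your sketch passes over but which does not change the structure of the proof.
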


\begin{thm}\label{teorema44}
Assume that  $\textbf{u} \in \mathbf{H}^{\ell + 1}(\Omega)$, for $1 \leq \ell \leq k$.  Then there holds
\begin{equation*}
\|\textbf{u} - \boldsymbol{\Pi}_{k,h}^{0}\textbf{u}_{h}\|_{0,\Omega} + \|\textbf{u} - \boldsymbol{\Pi}_{k,h}\textbf{u}_{h}\|_{0,\Omega} \lesssim \mathfrak{C}(\lambda_{S},\mu_{S})h^{\ell + 1}|\textbf{u}|_{\ell + 1,\Omega},
\end{equation*}
where $\mathfrak{C}(\lambda_{S},\mu_{S})$ is a positive constant depending on the Lam\'e coefficients, which is defined  in \cite[Theorem 3.4]{ALR:22}.
\end{thm}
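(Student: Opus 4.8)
The statement is essentially a corollary of the $\mathbf{L}^2$ estimate of Theorem \ref{teorema43} and the $H^1$ estimate of Theorem \ref{teorema42}, combined with the polynomial consistency and the stability of the two projection operators. The plan is to treat the two summands separately, inserting in each case the corresponding projection of the \emph{exact} solution as an intermediate term and exploiting the linearity of the projections.

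For the $\mathbf{L}^2$ projection term I would write, by the triangle inequality,
\begin{equation*}
\|\textbf{u} - \boldsymbol{\Pi}_{k,h}^{0}\textbf{u}_{h}\|_{0,\Omega} \leq \|\textbf{u} - \boldsymbol{\Pi}_{k,h}^{0}\textbf{u}\|_{0,\Omega} + \|\boldsymbol{\Pi}_{k,h}^{0}(\textbf{u} - \textbf{u}_{h})\|_{0,\Omega}.
\end{equation*}
The first term is the standard approximation error of the $\mathbf{L}^2$ projection onto piecewise polynomials of degree $k$, bounded by $h^{\ell+1}|\textbf{u}|_{\ell+1,\Omega}$ via a Bramble--Hilbert argument on each $E$ followed by summation. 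Since $\boldsymbol{\Pi}_{k,h}^{0}$ is an $\mathbf{L}^2$-orthogonal projection, it is a contraction in $\|\cdot\|_{0,\Omega}$, so the second term is bounded by $\|\textbf{u} - \textbf{u}_{h}\|_{0,\Omega}$, which is $\lesssim \mathfrak{R}(\lambda_S,\mu_S)h^{\ell+1}|\textbf{u}|_{\ell+1,\Omega}$ by Theorem \ref{teorema43}.

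For the elliptic projection term I would use that $\boldsymbol{\Pi}_{k,E}$ reproduces $[\mathbb{P}_k(E)]^2$ (which follows from its three defining conditions) together with its linearity, to split
\begin{equation*}
\|\textbf{u} - \boldsymbol{\Pi}_{k,h}\textbf{u}_{h}\|_{0,\Omega} \leq \|\textbf{u} - \boldsymbol{\Pi}_{k,h}\textbf{u}\|_{0,\Omega} + \|\boldsymbol{\Pi}_{k,h}(\textbf{u} - \textbf{u}_{h})\|_{0,\Omega}.
\end{equation*}
For the first term, writing $\textbf{u} - \boldsymbol{\Pi}_{k,E}\textbf{u} = (I - \boldsymbol{\Pi}_{k,E})(\textbf{u} - \boldsymbol{q})$ with $\boldsymbol{q}$ the best local polynomial approximation and invoking an $\mathbf{L}^2$-stability bound for $\boldsymbol{\Pi}_{k,E}$ reduces it to polynomial approximation, giving $h^{\ell+1}|\textbf{u}|_{\ell+1,\Omega}$. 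For the second term I would use an elementwise stability estimate of the form $\|\boldsymbol{\Pi}_{k,E}\textbf{v}\|_{0,E} \lesssim \|\textbf{v}\|_{0,E} + h_E|\textbf{v}|_{1,E}$ with $\textbf{v} = \textbf{u} - \textbf{u}_{h}$; summing over $E$ and applying Theorem \ref{teorema43} to control $\|\textbf{u} - \textbf{u}_{h}\|_{0,\Omega}$ and Theorem \ref{teorema42} to control $h\,|\textbf{u} - \textbf{u}_{h}|_{1,\Omega}$ again yields the order $h^{\ell+1}$. Collecting the two summands and merging the Lam\'e-dependent constants produces the claimed $\mathfrak{C}(\lambda_S,\mu_S)$.

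The step I expect to be the main obstacle is precisely the $\mathbf{L}^2$-stability of the elliptic projection $\boldsymbol{\Pi}_{k,E}$ with a constant that does \emph{not} degenerate under assumption \textbf{A1}; note that the $h_E$-weighted form above is essential, since replacing it by the unweighted $\|\boldsymbol{\Pi}_{k,E}\textbf{v}\|_{0,E}\lesssim\|\textbf{v}\|_{1,E}$ would lose one power of $h$. In the classical, non-small-edge setting this stability is routine, but here one must verify that the constant is controlled by $\gamma$ (equivalently $\rho_E$) and $k$ alone, independently of $h_E$ and of the individual edge lengths, in particular for the boundary-mean fixing condition $\int_{\partial E}\boldsymbol{\Pi}_{k,E}\textbf{v}=\int_{\partial E}\textbf{v}$. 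This is exactly where the technical Corollary \ref{cor} and Lemmas \ref{corx}--\ref{corxx}, inherited from \cite{ALR:22}, are needed, and where the sharp tracking of the Lam\'e coefficients entering $\mathfrak{C}(\lambda_S,\mu_S)$ takes place.
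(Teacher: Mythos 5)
Your proposal is correct in outline, but note that this manuscript does not actually prove Theorem \ref{teorema44}: the statement is imported verbatim from the companion work \cite[Theorem 3.4]{ALR:22}, so there is no in-paper argument to compare against step by step. Your reconstruction is the natural one, and your two decompositions are sound: the $\L^2$-projection half is immediate from the Bramble--Hilbert estimate under \textbf{A1} plus the contraction property of $\boldsymbol{\Pi}_{k,h}^{0}$ and Theorem \ref{teorema43}, and the elliptic-projection half works provided one has the $h_E$-weighted stability $\|\boldsymbol{\Pi}_{k,E}\textbf{v}\|_{0,E}\lesssim\|\textbf{v}\|_{0,E}+h_E|\textbf{v}|_{1,E}$ with constants depending only on $\gamma$ and $k$ --- you are right that this is the one nontrivial ingredient, and it does hold under \textbf{A1} alone (the boundary-mean condition involves the whole of $\partial E$, whose length is comparable to $h_E$ for star-shaped elements, so no individual edge length enters; a scaled trace inequality and a scaled Korn inequality on $E$ finish the job). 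A slightly more elementary route, which avoids invoking $\L^2$-stability of $\boldsymbol{\Pi}_{k,E}$ on $\textbf{u}-\textbf{u}_h$ altogether and is the one typically used in this line of work, is to split instead as $\|\textbf{u}-\boldsymbol{\Pi}_{k,E}\textbf{u}_h\|_{0,E}\le\|\textbf{u}-\textbf{u}_h\|_{0,E}+\|(I-\boldsymbol{\Pi}_{k,E})\textbf{u}_h\|_{0,E}$ and then use that $(I-\boldsymbol{\Pi}_{k,E})\textbf{u}_h$ has zero mean on $\partial E$ (third defining condition of the projector), so a Poincar\'e--Friedrichs inequality gives $\|(I-\boldsymbol{\Pi}_{k,E})\textbf{u}_h\|_{0,E}\lesssim h_E|(I-\boldsymbol{\Pi}_{k,E})\textbf{u}_h|_{1,E}$, which is of order $h^{\ell}$ by Theorem \ref{teorema42} after inserting $\textbf{u}$; both routes yield the claimed $h^{\ell+1}$ rate and the same Lam\'e-coefficient tracking.
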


%
%

We begin with the following error estimate, which gives us an error estimate for eigenfunctions in $\textbf{L}^{2}$-norm. The proof of this result is based in a duality argument, which for our case, we adapt from  \cite[Theorem 3.3]{MR3197278}
\begin{thm}
For all $\textbf{f} \in \mathcal{E}$, if $\textbf{T}\textbf{f} = \textbf{u}$ and $\textbf{T}_{h}\textbf{f} = \textbf{u}_{h}$, we have
\begin{equation*}
\|\textbf{u} - \textbf{u}_{h}\|_{0,\O} \lesssim \mathfrak{D}(\lambda_{S},\mu_{S})h^{2}\|\textbf{u}\|_{2,\O},
\end{equation*}
where the hidden constant is independent of $h$ and $\mathfrak{D}(\lambda_{S},\mu_{S})$ is a positive constant depending on the Lam\'e coefficients.
\end{thm}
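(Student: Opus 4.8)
The plan is to run an Aubin--Nitsche duality argument adapted to the virtual element setting, in the spirit of \cite[Theorem 3.3]{MR3197278}, keeping in mind that the nonconformity of $a_h(\cdot,\cdot)$ and $b_h(\cdot,\cdot)$ will produce consistency terms on top of the usual approximation contribution. First I would introduce the dual problem: let $\bphi \in \mathbf{H}_0^1(\O)$ be the unique solution of
\begin{equation*}
a(\bv, \bphi) = (\bu - \bu_{h}, \bv)_{0,\O} \qquad \forall \bv \in \mathbf{H}_0^1(\O),
\end{equation*}
which is well posed by Lax--Milgram and coercivity of $a(\cdot,\cdot)$. Since $\O$ is convex, the elliptic regularity invoked in Lemma \ref{reg2} (now for the source problem with right-hand side $\bu-\bu_{h}\in\mathbf{L}^2(\O)$) yields $\bphi\in\mathbf{H}^2(\O)$ together with the a priori bound $\|\bphi\|_{2,\O}\lesssim\|\bu-\bu_{h}\|_{0,\O}$. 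Testing with $\bv=\bu-\bu_{h}$ gives the starting identity $\|\bu-\bu_{h}\|_{0,\O}^2=a(\bu-\bu_{h},\bphi)$.

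Next I would fix $\bphi_{h}\in\boldsymbol{\mathcal{V}}_{h}$ to be the virtual element interpolant of $\bphi$ and split
\begin{equation*}
a(\bu-\bu_{h},\bphi)=a(\bu-\bu_{h},\bphi-\bphi_{h})+a(\bu-\bu_{h},\bphi_{h}).
\end{equation*}
The first term is the pure approximation contribution: by Cauchy--Schwarz, the $\mathbf{H}^1$ estimate of Theorem \ref{teorema42} with $\ell=1$ applied both to $\bu$ and to $\bphi$, and the regularity bound above, it is controlled by $|\bu-\bu_{h}|_{1,\O}\,|\bphi-\bphi_{h}|_{1,\O}\lesssim (h|\bu|_{2,\O})(h|\bphi|_{2,\O})\lesssim h^{2}\|\bu\|_{2,\O}\|\bu-\bu_{h}\|_{0,\O}$, where the Lam\'e-dependent factors are absorbed into $\mathfrak{D}(\lambda_S,\mu_S)$.

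The second term carries the consistency errors. Since $\bphi_{h}\in\boldsymbol{\mathcal{V}}_{h}\subset\mathbf{H}_0^1(\O)$, the exact solution satisfies $a(\bu,\bphi_{h})=b(\textbf{f},\bphi_{h})$, while the discrete solution satisfies $a_{h}(\bu_{h},\bphi_{h})=b_{h}(\textbf{f},\bphi_{h})$; subtracting gives
\begin{equation*}
a(\bu-\bu_{h},\bphi_{h})=\big(b(\textbf{f},\bphi_{h})-b_{h}(\textbf{f},\bphi_{h})\big)+\big(a_{h}(\bu_{h},\bphi_{h})-a(\bu_{h},\bphi_{h})\big).
\end{equation*}
Each parenthesis is treated elementwise. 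For the stiffness consistency I would insert polynomial approximations $\boldsymbol{u}_{\pi},\boldsymbol{\varphi}_{\pi}$, exploit that $a_{h}^{E}$ coincides with $a^{E}$ whenever one argument is a polynomial, and reduce the remainder to stabilization terms of the form $S^{E}(\bu_{h}-\boldsymbol{\Pi}_{k,E}\bu_{h},\cdot)$ and $S^{E}(\cdot,\bphi_{h}-\boldsymbol{\Pi}_{k,E}\bphi_{h})$; a Cauchy--Schwarz inequality in $S^{E}$ combined with Lemma \ref{lema43} applied to both factors (each with $\ell=1$) produces the factor $h\cdot h=h^{2}$. For the mass consistency I would use the $\mathbf{L}^2$-orthogonality of $\boldsymbol{\Pi}_{k,E}^{0}$ to write $b^{E}(\textbf{f},\bphi_{h})-b_{h}^{E}(\textbf{f},\bphi_{h})$ in terms of $\textbf{f}-\boldsymbol{\Pi}_{k,E}^{0}\textbf{f}$ and $\bphi_{h}-\boldsymbol{\Pi}_{k,E}^{0}\bphi_{h}$, which are bounded using the $\mathbf{L}^2$ estimates of Theorems \ref{teorema43} and \ref{teorema44}, again giving order $h^{2}$ times $\|\bu\|_{2,\O}\|\bu-\bu_{h}\|_{0,\O}$ up to Lam\'e-dependent constants.

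Collecting the three bounds and dividing by $\|\bu-\bu_{h}\|_{0,\O}$ yields the claim, with $\mathfrak{D}(\lambda_S,\mu_S)$ gathering the constants from Theorem \ref{teorema42}, Lemma \ref{lema43} and Theorems \ref{teorema43}--\ref{teorema44}. I expect the main obstacle to be the two consistency terms: one must verify that the stabilization $S^{E}$ contributes at the \emph{correct} order $h^{2}$ rather than the naive $h$, which hinges on keeping both virtual factors $\bu_{h}-\boldsymbol{\Pi}_{k,E}\bu_{h}$ and $\bphi_{h}-\boldsymbol{\Pi}_{k,E}\bphi_{h}$ small simultaneously within the small-edge framework, and that the several Lam\'e-dependent constants can be tracked uniformly so as to be condensed into the single factor $\mathfrak{D}(\lambda_S,\mu_S)$.
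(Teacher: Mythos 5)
Your proposal follows essentially the same route as the paper's proof: the same Aubin--Nitsche duality argument with the dual solution $\boldsymbol{\Phi}$, the same splitting via the virtual element interpolant into an approximation term plus the error equation, and the same treatment of the two consistency terms ($a_h$ versus $a$ through the projections and stabilization, $b_h$ versus $b$ through the $\mathbf{L}^2$-orthogonality of $\boldsymbol{\Pi}^0_{k,E}$), concluded with the elliptic regularity bound $\|\boldsymbol{\Phi}\|_{2,\O}\lesssim\|\textbf{u}-\textbf{u}_h\|_{0,\O}$. The argument is correct and matches the paper's in structure and in the lemmas invoked.
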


\begin{proof}
Let $\boldsymbol{\Phi} \in \mathbf{H}_{0}^{1}(\O)$ the unique solution of the problem
\begin{equation*}
a(\boldsymbol{\Phi},\textbf{v}) = b(\textbf{u} - \textbf{u}_{h}, \textbf{v}) \quad \forall \textbf{v} \in \mathbf{H}_{0}^{1}(\O).
\end{equation*}  
Then, if  we set  $\textbf{v} = \textbf{u} - \textbf{u}_{h}$ on the above problem, we have 
\begin{equation*}
\varrho\|\textbf{u} - \textbf{u}_{h}\|_{0,\O}^{2} = a(\textbf{u} - \textbf{u}_{h},\boldsymbol{\Phi}) 
= a(\textbf{u} - \textbf{u}_{h}, \boldsymbol{\Phi} - \textbf{I}_{k,h}\boldsymbol{\Phi}) + a(\textbf{u} - \textbf{u}_{h},\textbf{I}_{k,h}\boldsymbol{\Phi}).
\end{equation*}
Our task is to estimate the two terms in the right-hand side. For the first term, we have 
\begin{equation}\label{cota11}
\begin{split}
a(\textbf{u} - \textbf{u}_{h}, \boldsymbol{\Phi} - \textbf{I}_{k,h}\boldsymbol{\Phi}) &\lesssim \max\{\lambda_{S},\mu_{S}\}|\textbf{u} - \textbf{u}_{h}|_{1,\O}|\boldsymbol{\Phi} - \textbf{I}_{k,h}\boldsymbol{\Phi}|_{1,\O} \\
&\lesssim \max\{\lambda_{S}^{2}\mu_{S}^{-1},\lambda_{S},\mu_{S}\}K(\lambda_{S},\mu_{S})h^{2}\|\textbf{u}\|_{2,\O}\|\boldsymbol{\Phi}\|_{2,\O},
\end{split}
\end{equation} 
where, in the first inequality, we use the continuity of $a(\cdot,\cdot)$, Theorem \ref{teorema42} and \cite[Lemma 3.10]{ALR:22}. 

On the other hand, we have the following error equation

\begin{equation*}
\begin{split}
a(\textbf{u} - \textbf{u}_{h},\textbf{I}_{k,h}\boldsymbol{\Phi}) &= a(\textbf{u},\textbf{I}_{k,h}\boldsymbol{\Phi}) - a(\textbf{u}_{h},\textbf{I}_{k,h}\boldsymbol{\Phi}) \\
&= \underbrace{a_{h}(\textbf{u}_{h},\textbf{I}_{k,h}\boldsymbol{\Phi}) - a(\textbf{u}_{h}, \textbf{I}_{k,h}\boldsymbol{\Phi})}_{B_{1}} + \underbrace{b(\textbf{f}, \textbf{I}_{k,h}\boldsymbol{\Phi}) - b_{h}(\textbf{f},\textbf{I}_{k,h}\boldsymbol{\Phi})}_{B_{2}}.
\end{split}
\end{equation*}
To estimate  $B_{1}$, from the definition of $a_{h}(\cdot,\cdot)$, the continuity of $a(\cdot,\cdot)$, \cite[Lemmas 3.10, 3.15 and 3.16]{ALR:22} and Theorem \ref{teorema42}, we have
\begin{multline*}
B_{1} = \displaystyle{\sum_{E \in \mathcal{T}_{h}} a^{E}(\boldsymbol{\Pi}_{k,E}\textbf{u}_{h} - \textbf{u}_{h}, \boldsymbol{\Pi}_{k,E}\textbf{I}_{k,E}\boldsymbol{\Phi} - \textbf{I}_{k,E}\boldsymbol{\Phi})} + \displaystyle{\sum_{E \in \mathcal{T}_{h}} S^{E}((I - \boldsymbol{\Pi}_{k,E})\textbf{u}_{h}, (I - \boldsymbol{\Pi}_{k,E})\textbf{I}_{k,E}\boldsymbol{\Phi})} \\ 
\lesssim \max\{\lambda_{S},\mu_{S}\}\displaystyle{\sum_{E \in \mathcal{T}_{h}} |\textbf{u}_{h} - \boldsymbol{\Pi}_{k,E}\textbf{u}_{h}|_{1,E}|\textbf{I}_{k,E}\boldsymbol{\Phi} - \boldsymbol{\Pi}_{k,E}\textbf{I}_{k,E}\boldsymbol{\Phi}|_{1,E}} \\ 
+ \max\{\lambda_{S}\mu_{S}^{-1},1\}\mathfrak{C}(\lambda_{S},\mu_{S})^{1/2}h^{2}\|\textbf{u}\|_{2,\O}\|\boldsymbol{\Phi}\|_{2,\O} \\
\lesssim \max\{\lambda_{S}^{2}\mu_{S}^{-1},\lambda_{S},\mu_{S}\}K(\lambda_{S},\mu_{S})h^{2}\|\textbf{u}\|_{2,\O}\|\boldsymbol{\Phi}\|_{2,\O} + \max\{\lambda_{S}\mu_{S}^{-1},1\}\mathfrak{C}(\lambda_{S},\mu_{S})^{1/2}h^{2}\|\textbf{u}\|_{2,\O}\|\boldsymbol{\Phi}\|_{2,\O} \\
\lesssim Z(\lambda_{S},\mu_{S})h^{2}\|\textbf{u}\|_{2,\O}\|\boldsymbol{\Phi}\|_{2,\O},
\end{multline*}
where $Z(\lambda_{S},\mu_{S}) := \max\{\max\{\lambda_{S}^{2}\mu_{S}^{-1},\lambda_{S},\mu_{S}\}K(\lambda_{S},\mu_{S}), \max\{\lambda_{S}\mu_{S}^{-1},1\}\mathfrak{C}(\lambda_{S},\mu_{S})^{1/2}\}$.

Now, to bound $B_{2}$, thanks to \cite[Lemma 3.10]{ALR:22} and the stability of $\boldsymbol{\Pi}_{k,E}^{0}$ in $\textbf{L}^{2}(\O)$ norm, we have 
\begin{equation*}
\begin{split}
B_{2} &= \displaystyle{\sum_{E \in \mathcal{T}_{h}} b^{E}(\textbf{f},\textbf{I}_{k,h}\boldsymbol{\Phi}) - b^{E}(\boldsymbol{\Pi}_{k,E}^{0}\textbf{f}, \boldsymbol{\Pi}_{k,E}^{0}\textbf{I}_{k,h}\boldsymbol{\Phi})} \\
&= \displaystyle{\sum_{E \in \mathcal{T}_{h}} b^{E}(\textbf{f} - \boldsymbol{\Pi}_{k,E}^{0}\textbf{f},\textbf{I}_{k,h}\boldsymbol{\Phi} - \boldsymbol{\Pi}_{k,E}^{0}\textbf{I}_{k,h}\boldsymbol{\Phi})} \\
&\lesssim  \displaystyle{\sum_{E \in \mathcal{T}_{h}} \|\textbf{f} - \boldsymbol{\Pi}_{k,E}^{0}\textbf{f}\|_{0,\O}\|\textbf{I}_{k,h}\boldsymbol{\Phi} - \boldsymbol{\Pi}_{k,E}^{0}\textbf{I}_{k,h}\boldsymbol{\Phi}\|_{0,\O}} \lesssim \max\{\lambda_{S}\mu_{S}^{-1},1\}^{2}h^{2}\|\textbf{u}\|_{2,\O}\|\boldsymbol{\Phi}\|_{2,\O}.
\end{split}
\end{equation*}
Finally, using the additional regularity for $\boldsymbol{\Phi}$ and the estimate 
\begin{equation*}
\|\boldsymbol{\Phi}\|_{2,\O} \lesssim \|\textbf{u} - \textbf{u}_{h}\|_{0,\O},
\end{equation*}
we conclude the result, where the constant $\mathfrak{D}(\lambda_{S},\mu_{S})$ is defined by
\begin{equation*}
\mathfrak{D}(\lambda_{S},\mu_{S}) := \max\{\max\{\lambda_{S}^{2}\mu_{S}^{-1},\lambda_{S},\mu_{S}\}K(\lambda_{S},\mu_{S}), Z(\lambda_{S},\mu_{S}), \max\{\lambda_{S}\mu_{S}^{-1},1\}^{2}\}.
\end{equation*}
\end{proof}

\subsection{Spectral approximation and error estimates}

In this section, our task will be to show that the discrete operator $\textbf{T}_{h}$ converges to $\textbf{T}$. With this aim, and taking advantage of the compactness of $\textbf{T}$, we will prove that this convergence is precisely obtained in the norm $\|\cdot\|_{1,\O}$ in order to apply the theory of \cite{MR1115235}.  We remark that  the compact operator theory gives immediately the convergence of eigenfunctions and eigenvalues.

Let us begin  with the following result.
\begin{lemma}\label{convergencia1}
The following estimate holds
\begin{equation*}
\|(\textbf{T} - \textbf{T}_{h})\textbf{f}\|_{1,\Omega} \lesssim K(\lambda_{S},\mu_{S})h\|\textbf{f}\|_{1,\Omega} \quad \forall \textbf{f} \in \mathbf{H}_{0}^{1}(\Omega),
\end{equation*}
\end{lemma}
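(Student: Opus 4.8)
The plan is to reduce everything to the source-problem error estimates already recorded in Theorems \ref{teorema42} and \ref{teorema43}, combined with the elliptic regularity available on the convex domain $\O$. First I would set $\textbf{u} := \textbf{T}\textbf{f}$ and $\textbf{u}_h := \textbf{T}_h\textbf{f}$, so that $\textbf{u} \in \mathbf{H}^1_0(\O)$ solves the continuous source problem with datum $\textbf{f}$ and $\textbf{u}_h \in \boldsymbol{\mathcal{V}}_h$ solves its discrete counterpart. Since $\O$ is open, bounded and convex, the regularity argument behind Lemma \ref{reg2} (cf. \cite{MR840970}) applied to the source problem yields $\textbf{u} \in \mathbf{H}^2(\O)$ together with $\|\textbf{u}\|_{2,\O} \lesssim \|\textbf{f}\|_{0,\O} \leq \|\textbf{f}\|_{1,\O}$.

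Then I would invoke the approximation estimates with the lowest admissible regularity index $\ell = 1$, which is the only one compatible with the $\mathbf{H}^2$ regularity just obtained. Theorem \ref{teorema42} gives the seminorm control
\[|\textbf{u} - \textbf{u}_h|_{1,\O} \lesssim K(\lambda_S,\mu_S) h |\textbf{u}|_{2,\O},\]
while Theorem \ref{teorema43} gives the $\textbf{L}^2$ control
\[\|\textbf{u} - \textbf{u}_h\|_{0,\O} \lesssim \mathfrak{R}(\lambda_S,\mu_S) h^2 |\textbf{u}|_{2,\O}.\]
Combining these through $\|\textbf{u} - \textbf{u}_h\|_{1,\O}^2 = \|\textbf{u} - \textbf{u}_h\|_{0,\O}^2 + |\textbf{u} - \textbf{u}_h|_{1,\O}^2$ and absorbing the higher power of $h$ (using $h \lesssim 1$) produces $\|\textbf{u} - \textbf{u}_h\|_{1,\O} \lesssim \max\{K(\lambda_S,\mu_S),\mathfrak{R}(\lambda_S,\mu_S)\} h |\textbf{u}|_{2,\O}$. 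Finally, inserting $|\textbf{u}|_{2,\O} \leq \|\textbf{u}\|_{2,\O} \lesssim \|\textbf{f}\|_{1,\O}$ and renaming the resulting Lam\'e-dependent factor as $K(\lambda_S,\mu_S)$ closes the estimate.

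I do not expect a genuine obstacle here, since the heavy analytic work sits inside the cited Theorems \ref{teorema42} and \ref{teorema43}. The two points requiring care are: first, ensuring that the source-problem solution inherits full $\mathbf{H}^2$ regularity from the convexity of $\O$, so that the choice $\ell = 1$ is legitimate; and second, bookkeeping the Lam\'e-coefficient constants so that the final factor is genuinely of the form $K(\lambda_S,\mu_S)$. Both are routine. It is worth noting that the estimate is deliberately phrased in terms of $\|\textbf{f}\|_{1,\O}$, rather than the sharper $\|\textbf{f}\|_{0,\O}$, because the subsequent spectral analysis requires convergence of $\textbf{T}_h$ to $\textbf{T}$ in the operator norm of $\mathcal{L}(\mathbf{H}^1_0(\O))$, for which this bound is exactly what is needed.
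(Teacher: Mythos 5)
Your argument is correct and follows essentially the same route as the paper: apply Theorem \ref{teorema42} with $\ell=1$ to the source problems defining $\textbf{T}\textbf{f}$ and $\textbf{T}_h\textbf{f}$, then use the $\mathbf{H}^2$ regularity on the convex domain (Lemma \ref{reg2}) to bound $|\textbf{T}\textbf{f}|_{2,\O}\lesssim\|\textbf{f}\|_{0,\O}\leq\|\textbf{f}\|_{1,\O}$. The only cosmetic difference is that the paper passes from the $\mathbf{H}^1$ seminorm to the full norm via the Poincar\'e inequality (since $\textbf{T}\textbf{f}-\textbf{T}_h\textbf{f}\in\mathbf{H}^1_0(\O)$), which keeps the constant as $K(\lambda_S,\mu_S)$ without invoking Theorem \ref{teorema43} or renaming a combined constant.
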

where the hidden constant is independent  $h$.
\begin{proof}
Note that, from Theorem \ref{teorema42} with $\ell = 1$ and applying Poincar\'e inequality, we deduce the following error estimate in $\|\cdot\|_{1,\Omega}$ norm 
\begin{equation*}\label{stimah1}
\|(\textbf{T} - \textbf{T}_{h})\textbf{f}\|_{1,\Omega} \lesssim K(\lambda_{S},\mu_{S})h|\textbf{T}\textbf{f}|_{2,\Omega}.
\end{equation*}
Then, applying Lemma \ref{reg2} and the continuity of $\textbf{T}$, we derive
\begin{equation*}
|\textbf{T}\textbf{f}|_{2,\O} \lesssim \|\textbf{f}\|_{0,\O} \leq \|\textbf{f}\|_{1,\O},
\end{equation*}
concluding  the proof.
\end{proof}

\begin{remark}
As a consequence of the previous corollary we have that isolated parts of $\sp(\bT)$ are precisely approximate by isolated parts of $\sp(\bT_h)$. This fact means that if $\kappa\neq 0$ is an isolated eigenvalue of $\bT_h$ with multiplicity $m$ and $\mathcal{E}$ denotes the associated invariant space for the corresponding eigenfunctions, then there exists $m$ eigenvalues of $\bT_h$ which we denote by $\kappa_h^{(1)},\ldots, \kappa_h^{(m)}$, all of them with their corresponding multiplicity and invariant space $\mathcal{E}_h$ associated to the corresponding discrete eigenfunctions, such that converge to $\kappa$.
\end{remark}

Now our aims is to obtain error estimates for the approximation of the eigenvalues and eigenfunctions. With this goal in mind, we recall the following definitions.
\begin{definition}We define the  gap $\widehat{\delta}$ between two closed subspaces $\boldsymbol{\mathcal{X}}$ e $\boldsymbol{\mathcal{Y}}$ of  $\mathbf{H}_{0}^{1}(\Omega)$ by 
\begin{equation*}
\widehat{\delta}(\boldsymbol{\mathcal{X}}, \boldsymbol{\mathcal{Y}}) := \max\{\delta (\boldsymbol{\mathcal{X}}, \boldsymbol{\mathcal{Y}}), \delta (\boldsymbol{\mathcal{Y}}, \boldsymbol{\mathcal{X}})\},
\end{equation*}
where
\begin{equation*}
\delta (\boldsymbol{\mathcal{X}}, \boldsymbol{\mathcal{Y}}) := \underset{\textbf{x} \in \boldsymbol{\mathcal{X}} : \|\textbf{x}\|_{1, \Omega} = 1}{\sup} \left\lbrace \underset{\textbf{y} \in \boldsymbol{\mathcal{Y}}}{\inf} \quad \|\textbf{x} - \textbf{y}\|_{1, \Omega} \right\rbrace.
\end{equation*}
\end{definition}

The following result provides error estimates for the eigenfunctions and eigenvalues of the elasticity spectral problem.
\begin{thm}
\label{convergencia3} 
The following estimates hold
\begin{itemize}
\item[i)] $\widehat{\delta}(\mathcal{E}, \mathcal{E}_{h}) \lesssim K(\lambda_{S},\mu_{S})\gamma_{h}$
\item[ii)] $\left\lvert \kappa - \kappa_{h}^{(i)} \right\lvert \lesssim K(\lambda_{S},\mu_{S})\gamma_{h}$, $i = 1, \ldots, m$
\end{itemize}
with 
\begin{equation*}
\gamma_{h} := \underset{\textbf{f} \in \mathcal{E} : \|\textbf{f}\|_{1, \Omega} = 1}{\sup} \; \|(\textbf{T} - \textbf{T}_{h})\textbf{f}\|_{1, \Omega},
\end{equation*}
and the hidden constants are independent of $h$.
\end{thm}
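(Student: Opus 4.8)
The plan is to derive both estimates as a direct application of the abstract spectral approximation theory for compact operators in \cite{MR1115235}, feeding in as the only analytic ingredient the norm convergence of Lemma \ref{convergencia1}. First I would fix the setting recorded in the preceding remark: by the spectral characterization, $\textbf{T}$ is compact and selfadjoint with respect to $a(\cdot,\cdot)$, the number $\eta:=1/\kappa$ is an isolated eigenvalue of $\textbf{T}$ of finite multiplicity $m$ with invariant subspace $\mathcal{E}$, and the associated spectral projector is $E=\frac{1}{2\pi i}\oint_{\Gamma}(z-\textbf{T})^{-1}\,dz$, where $\Gamma$ is a circle enclosing $\eta$ and no other point of $\sp(\textbf{T})$. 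The convergence $\|\textbf{T}-\textbf{T}_{h}\|_{\mathcal{L}(\mathbf{H}_0^1(\Omega))}\to 0$ from Lemma \ref{convergencia1} guarantees that, for $h$ small enough, $\Gamma$ also separates the cluster $\{\eta_h^{(1)},\dots,\eta_h^{(m)}\}$, with $\eta_h^{(i)}=1/\kappa_h^{(i)}$, from the remaining spectrum of $\textbf{T}_{h}$; hence $E_h=\frac{1}{2\pi i}\oint_{\Gamma}(z-\textbf{T}_{h})^{-1}\,dz$ is well defined, has rank $m$, and $\mathcal{E}_h=E_h(\mathbf{H}_0^1(\Omega))$.

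For part i) I would invoke the standard gap estimate. Since the resolvents $(z-\textbf{T}_{h})^{-1}$ are uniformly bounded on $\Gamma$ for $h$ small, one obtains $\|E-E_h\|_{\mathcal{L}(\mathbf{H}_0^1(\Omega))}\lesssim \|(\textbf{T}-\textbf{T}_{h})E\|_{\mathcal{L}(\mathbf{H}_0^1(\Omega))}$, and the right-hand side is controlled by $\gamma_h$ precisely because $\mathcal{E}=E(\mathbf{H}_0^1(\Omega))$. Converting this operator bound into a bound on the gap $\widehat{\delta}(\mathcal{E},\mathcal{E}_h)$ is exactly the content of the abstract lemmas of \cite{MR1115235}, yielding $\widehat{\delta}(\mathcal{E},\mathcal{E}_h)\lesssim \gamma_h$. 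The Lam\'e-dependent factor $K(\lambda_S,\mu_S)$ enters here through the equivalence between the energy inner product $a(\cdot,\cdot)$—in which $\textbf{T}$ is selfadjoint and the resolvent estimates are cleanest—and the $\|\cdot\|_{1,\Omega}$-norm used to measure the gap, whose constants are governed by the coercivity and continuity of $a(\cdot,\cdot)$.

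For part ii) the same abstract theory provides the eigenvalue estimate $|\eta-\eta_h^{(i)}|\lesssim \gamma_h$ for each $i=1,\dots,m$, obtained by comparing $\textbf{T}$ and $\textbf{T}_{h}$ on the finite-dimensional space $\mathcal{E}$ through $E$ and $E_h$. It then remains to pass from the eigenvalues $\eta,\eta_h^{(i)}$ of the solution operators back to the eigenvalues $\kappa=1/\eta$ and $\kappa_h^{(i)}=1/\eta_h^{(i)}$ of Problems \ref{eigen_continuo} and \ref{discretelast}. Writing $|\kappa-\kappa_h^{(i)}|=|\eta-\eta_h^{(i)}|/(\eta\,\eta_h^{(i)})$ and using that $\eta_h^{(i)}\to\eta>0$, so that the denominators are bounded below uniformly in $h$ for $h$ small, I would conclude $|\kappa-\kappa_h^{(i)}|\lesssim K(\lambda_S,\mu_S)\gamma_h$.

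The main obstacle is not any single computation but the bookkeeping required to keep every constant uniform in $h$ while making the dependence on $\lambda_S$ and $\mu_S$ transparent. Concretely, one must verify that $(z-\textbf{T}_{h})^{-1}$ is uniformly bounded on $\Gamma$—which follows from Lemma \ref{convergencia1} combined with the fixed spectral gap of $\textbf{T}$—and one must track carefully how the passage between the $a$-inner product and the $\|\cdot\|_{1,\Omega}$-norm generates the factor $K(\lambda_S,\mu_S)$, exactly in the spirit of the estimate already isolated in Lemma \ref{convergencia1}.
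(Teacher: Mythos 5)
Your proposal is correct and follows essentially the same route as the paper: the authors simply invoke Lemma \ref{convergencia1} for the norm convergence $\|\textbf{T}-\textbf{T}_h\|\to 0$ and then cite Theorems 7.1 and 7.3 of the Babu\v{s}ka--Osborn theory, which is exactly the abstract machinery (spectral projectors, resolvent bounds, gap estimates, and the passage from $\eta=1/\kappa$ back to $\kappa$) that you spell out in detail. Your version merely makes explicit what the paper delegates to the reference.
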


\begin{proof}
Thanks to Lemma \ref{convergencia1}, since  $\textbf{T}_{h}$ converges to  $\textbf{T}$ in norm,  the proof is a direct consequence of the compact operators theory  of  Babu\v{s}ka-Osborn (see \cite[Theorems 7.1 and 7.3]{MR1115235}).
\end{proof}

Theorem \ref{convergencia3} is a result with a preliminary error estimate for the eigenvalues. Nevertheless, we are able to improve the linear order of convergence of this result, proving a quadratic order of convergence for the eigenvalues. This is stated in the following result. 


\begin{thm}
The following estimate holds
\begin{equation*}
|\kappa - \kappa_{h}^{(k)}| \lesssim  \mathfrak{F}(\lambda_{S},\mu_{S})h^{2},
\end{equation*}
where the hidden constant is independent of $h$.
\end{thm}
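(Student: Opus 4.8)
The plan is to derive an exact algebraic identity for the eigenvalue error and then bound each of its pieces, showing that the dominant contributions scale like $h^2$ while the remaining ones are of higher order. Throughout I fix a continuous eigenpair $(\kappa,\textbf{w})$ and the discrete eigenpair $(\kappa_h^{(k)},\textbf{w}_h)$ furnished by the spectral convergence of Theorem \ref{convergencia3}, normalized so that $b(\textbf{w},\textbf{w})=1$ and $b_h(\textbf{w}_h,\textbf{w}_h)=1$. (When $\kappa$ has multiplicity $m>1$ one selects, for each discrete eigenfunction, the closest element of the invariant space $\mathcal{E}$; this is standard in the Babu\v{s}ka--Osborn framework and does not affect the orders below.) First I would establish the identity
\[
\kappa_h^{(k)} - \kappa
= \underbrace{a(\textbf{w} - \textbf{w}_h, \textbf{w} - \textbf{w}_h) - \kappa\, b(\textbf{w} - \textbf{w}_h, \textbf{w} - \textbf{w}_h)}_{T_1}
+ \underbrace{a_h(\textbf{w}_h, \textbf{w}_h) - a(\textbf{w}_h, \textbf{w}_h)}_{T_2}
- \kappa\,\underbrace{\big(b_h(\textbf{w}_h, \textbf{w}_h) - b(\textbf{w}_h, \textbf{w}_h)\big)}_{T_3},
\]
obtained by expanding $a(\textbf{w}-\textbf{w}_h,\textbf{w}-\textbf{w}_h)-\kappa b(\textbf{w}-\textbf{w}_h,\textbf{w}-\textbf{w}_h)$, collapsing the cross terms via the eigenrelation $a(\textbf{w},\textbf{v})=\kappa b(\textbf{w},\textbf{v})$ and the normalizations, and inserting the discrete eigenrelation $a_h(\textbf{w}_h,\textbf{w}_h)=\kappa_h^{(k)}b_h(\textbf{w}_h,\textbf{w}_h)$.

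To bound $T_1$ I would use the continuity of $a(\cdot,\cdot)$ and Theorem \ref{teorema42} with $\ell=1$, giving $a(\textbf{w}-\textbf{w}_h,\textbf{w}-\textbf{w}_h)\lesssim\max\{\lambda_S,\mu_S\}|\textbf{w}-\textbf{w}_h|_{1,\Omega}^2\lesssim\max\{\lambda_S,\mu_S\}K(\lambda_S,\mu_S)^2 h^2\|\textbf{w}\|_{2,\Omega}^2$, which is $O(h^2)$; the companion term $\kappa\,b(\textbf{w}-\textbf{w}_h,\textbf{w}-\textbf{w}_h)\lesssim\|\textbf{w}-\textbf{w}_h\|_{0,\Omega}^2$ is controlled by the $\textbf{L}^2$ duality estimate $\|\textbf{u}-\textbf{u}_h\|_{0,\Omega}\lesssim\mathfrak{D}(\lambda_S,\mu_S)h^2\|\textbf{u}\|_{2,\Omega}$ proved above and is therefore $O(h^4)$, hence negligible. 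The $\mathbf{H}^2$ regularity of eigenfunctions from Lemma \ref{reg2} lets me replace $\|\textbf{w}\|_{2,\Omega}$ by $\|\textbf{w}\|_{0,\Omega}$.

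For the consistency term $T_2$ I would exploit the energy-orthogonality $a^E(\textbf{w}_h-\boldsymbol{\Pi}_{k,E}\textbf{w}_h,\boldsymbol{\Pi}_{k,E}\textbf{w}_h)=0$, which yields the local splitting $a^E(\boldsymbol{\Pi}_{k,E}\textbf{w}_h,\boldsymbol{\Pi}_{k,E}\textbf{w}_h)-a^E(\textbf{w}_h,\textbf{w}_h)=-a^E(\textbf{w}_h-\boldsymbol{\Pi}_{k,E}\textbf{w}_h,\textbf{w}_h-\boldsymbol{\Pi}_{k,E}\textbf{w}_h)$, so that
\[
T_2 = \sum_{E\in\mathcal{T}_h}\Big[S^E(\textbf{w}_h - \boldsymbol{\Pi}_{k,E}\textbf{w}_h,\, \textbf{w}_h - \boldsymbol{\Pi}_{k,E}\textbf{w}_h) - a^E(\textbf{w}_h - \boldsymbol{\Pi}_{k,E}\textbf{w}_h,\, \textbf{w}_h - \boldsymbol{\Pi}_{k,E}\textbf{w}_h)\Big].
\]
The stabilization contribution is $O(h^2)$ by Lemma \ref{lema43} with $\ell=1$, while the $a^E$-contribution is bounded by $\max\{\lambda_S,\mu_S\}|\textbf{w}_h-\boldsymbol{\Pi}_{k,h}\textbf{w}_h|_{1,h}^2$, again $O(h^2)$ after a triangle inequality and Theorem \ref{teorema42}. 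For $T_3$ the $\textbf{L}^2$-orthogonality of $\boldsymbol{\Pi}_{k,E}^0$ gives $b(\textbf{w}_h,\textbf{w}_h)-b_h(\textbf{w}_h,\textbf{w}_h)=\varrho\|\textbf{w}_h-\boldsymbol{\Pi}_{k,h}^0\textbf{w}_h\|_{0,\Omega}^2$, which by Theorems \ref{teorema43} and \ref{teorema44} is $O(h^4)$ and thus higher order.

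Collecting the estimates, the two $O(h^2)$ contributions (the energy part of $T_1$ and the term $T_2$) dominate, and the result follows with $\mathfrak{F}(\lambda_S,\mu_S)$ assembled from $K(\lambda_S,\mu_S)$, $\mathcal{C}(\lambda_S,\mu_S)$ and $\max\{\lambda_S,\mu_S\}$. The hard part will be the treatment of the VEM consistency term $T_2$: unlike in a conforming method, $a_h\neq a$ on $\boldsymbol{\mathcal{V}}_h$, so the quadratic order is not automatic and hinges on the sharp stabilization bound of Lemma \ref{lema43} together with showing that replacing the discrete eigenfunction $\textbf{w}_h$ by the regular continuous eigenfunction $\textbf{w}$ inside those estimates costs nothing at order $h^2$, via the triangle inequality and the $\mathbf{H}^1$ eigenfunction estimate of Theorem \ref{teorema42}.
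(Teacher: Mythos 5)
Your proof is correct and follows essentially the same route as the paper: the same algebraic identity for $\kappa_h^{(k)}-\kappa$ (your version, with $T_2=a_h(\textbf{w}_h,\textbf{w}_h)-a(\textbf{w}_h,\textbf{w}_h)$, is in fact the algebraically consistent form of the identity the paper states), with $T_1$ bounded via Theorem \ref{teorema42}, $T_2$ via Lemma \ref{lema43} and Theorem \ref{teorema42}, and $T_3$ via Theorems \ref{teorema43} and \ref{teorema44}. The only differences are cosmetic: your normalization $b_h(\textbf{w}_h,\textbf{w}_h)=1$ dispenses with the paper's final lower bound on $b_h(\textbf{w}_h,\textbf{w}_h)$, and you observe that the $\textbf{L}^2$ contributions are actually $O(h^4)$, which sharpens but does not change the stated $O(h^2)$ result.
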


\begin{proof}
Let  $(\kappa_{h}^{(i)},\textbf{w}_{h})\in\mathbb{R}\times\boldsymbol{\mathcal{V}}_h$  be the solution of  Problem \ref{discretelast} with  $\|\textbf{w}_{h}\|_{1,\Omega} = 1$. Thanks to the previous results, there exists $( \textbf{w},\kappa)\in\mathbf{H}_0^1(\O)\times\mathbb{R}$ solution of Problem \ref{eigen_continuo} such that 
\begin{equation*}
\|\textbf{w} - \textbf{w}_{h}\|_{1,\Omega} \lesssim K(\lambda_{S},\mu_{S})h,
\end{equation*}
where the hidden constant is independent of $h$. 

On the other hand, the following algebraic identity is straightforward
\begin{multline}
\label{eq:padra}
(\kappa_{h}^{(i)} - \kappa)b(\textbf{w}_{h}, \textbf{w}_{h}) = \underbrace{a(\textbf{w} - \textbf{w}_{h},\textbf{w} - \textbf{w}_{h}) - \kappa b(\textbf{w} - \textbf{w}_{h}, \textbf{w} - \textbf{w}_{h})}_{T_{1}}\\
 + \underbrace{\left[a_{h}(\textbf{w}_{h}, \textbf{w}_{h}) - a(\textbf{w}, \textbf{w})\right]}_{T_{2}} 
+ \kappa_{h}^{(i)}\underbrace{\left[b(\textbf{w}_{h}, \textbf{w}_{h}) - b_{h}(\textbf{w}_{h}, \textbf{w}_{h})\right]}_{T_{3}}.
\end{multline}

Now our aim is to estimate each of the contributions on the right hand side of \eqref{eq:padra}. From triangle inequality and the continuity of $a(\cdot, \cdot)$ and  $b(\cdot, \cdot)$ 
we have for the term $T_1$
\begin{multline}
\label{eq:TermT1}
|T_{1}| \leq |a(\textbf{w} - \textbf{w}_{h}, \textbf{w} - \textbf{w}_{h})| + \kappa|b(\textbf{w} - \textbf{w}_{h}, \textbf{w} - \textbf{w}_{h})| \\
\lesssim \max\{\lambda_{S},\mu_{S}\}|\textbf{w} - \textbf{w}_{h}|_{1,\Omega}^{2} + \kappa\varrho\|\textbf{w} - \textbf{w}_{h}\|_{1,\Omega}^{2} \\
\lesssim \max\{\lambda_{S},\mu_{S}\}\|\textbf{w} - \textbf{w}_{h}\|_{1,\Omega}^{2} + \kappa\varrho\|\textbf{w} - \textbf{w}_{h}\|_{1,\Omega}^{2} \\
\lesssim \max\{\lambda_{S},\mu_{S}\}\|\textbf{w} - \textbf{w}_{h}\|_{1,\Omega}^{2} 
\lesssim \mathcal{F}_{1}(\lambda_{S},\mu_{S})h^{2},
\end{multline}
with $\mathcal{F}_{1}(\lambda_{S},\mu_{S}) := \max\{\lambda_{S},\mu_{S}\}K(\lambda_{S},\mu_{S})^{2}$. 

On the other hand, invoking the definition of  $a_{h}(\cdot,\cdot)$, triangle inequality, Lemma \ref{lema43} and Theorem \ref{teorema42}, there holds for the term $T_2$
\begin{multline}
\label{eq:TermT2}
|T_{2}| = \left\lvert \displaystyle{\sum_{E \in \mathcal{T}_{h}} \left[ a_{h}^{E}(\textbf{w}_{h}, \textbf{w}_{h}) - a^{E}(\textbf{w}_{h}, \textbf{w}_{h})\right]} \right\lvert \\
= \left\lvert \displaystyle{\sum_{E \in \mathcal{T}_{h}} \left[ a_{h}^{E}(\textbf{w}_{h} - \boldsymbol{\Pi}_{k,E}\textbf{w}_{h}, \textbf{w}_{h} - \boldsymbol{\Pi}_{k,E}\textbf{w}_{h}) - a^{E}(\textbf{w}_{h} - \boldsymbol{\Pi}_{k,E}\textbf{w}_{h}, \textbf{w}_{h} - \boldsymbol{\Pi}_{k,E}\textbf{w}_{h})\right]} \right\lvert \\
\leq \left\lvert \displaystyle{\sum_{E \in \mathcal{T}_{h}} S^{E}(\textbf{w}_{h} - \boldsymbol{\Pi}_{k,E}\textbf{w}_{h}, \textbf{w}_{h} - \boldsymbol{\Pi}_{k,E}\textbf{w}_{h})}\right\lvert + \left\lvert \displaystyle{\sum_{E \in \mathcal{T}_{h}} a^{E}(\textbf{w}_{h} - \boldsymbol{\Pi}_{k,E}\textbf{w}_{h}, \textbf{w}_{h} - \boldsymbol{\Pi}_{k,E}\textbf{w}_{h})}\right\lvert \\
\lesssim \mathcal{C}(\lambda_{S},\mu_{S})h^{2} + \max\{\lambda_{S},\mu_{S}\}|\textbf{w}_{h} - \boldsymbol{\Pi}_{k,h}\textbf{w}_{h}|_{1,h}^{2} \\
\lesssim \mathcal{C}(\lambda_{S},\mu_{S})h^{2} + \max\{\lambda_{S},\mu_{S}\}\left(|\textbf{w}_{h} - \textbf{w}|_{1,\Omega} + |\textbf{w} - \boldsymbol{\Pi}_{k,h}\textbf{w}_{h}|_{1,h}\right)^{2} \\
\lesssim \mathcal{C}(\lambda_{S},\mu_{S})h^{2} + \max\{\lambda_{S},\mu_{S}\}K(\lambda_{S},\mu_{S})^{2}h^{2} \lesssim \mathcal{F}_{2}(\lambda_{S},\mu_{S})h^{2},
\end{multline}
where $\mathcal{F}_{2}(\lambda_{S},\mu_{S}) := \max\{\mathcal{C}(\lambda_{S},\mu_{S}), \max\{\lambda_{S},\mu_{S}\}(K(\lambda_{S},\mu_{S}))^{2}\}$.

Now, from the definition of  $\boldsymbol{\Pi}_{k,h}^{0}$, Theorem  \ref{teorema43}  and Theorem  \ref{teorema44}, the term $T_3$ is estimated as follows
\begin{multline}
\label{eq:TermT3}
|T_{3}| = \left\lvert \displaystyle{\sum_{E \in \mathcal{T}_{h}} \left[b_{h}^{E}(\textbf{w}_{h}, \textbf{w}_{h}) - b^{E}(\textbf{w}_{h}, \textbf{w}_{h})\right]} \right\lvert \\
=  \left\lvert \displaystyle{\sum_{E \in \mathcal{T}_{h}} b^{E}(\textbf{w}_{h} - \boldsymbol{\Pi}_{k,E}^{0}\textbf{w}_{h}, \textbf{w}_{h} - \boldsymbol{\Pi}_{k,E}^{0}\textbf{w}_{h})} \right\lvert 
\lesssim \displaystyle{\sum_{E \in \mathcal{T}_{h}} \|\textbf{w}_{h} - \boldsymbol{\Pi}_{k,E}^{0}\textbf{w}_{h}\|_{0,E}^{2}} \\
\lesssim \displaystyle{\sum_{E \in \mathcal{T}_{h}} \left(\|\textbf{w} - \textbf{w}_{h}\|_{0,E}^{2} + \|\textbf{w} - \bPi_{k,E}^{0}\textbf{w}_{h}\|_{0,E}^{2}\right)} \lesssim \mathcal{F}_{3}(\lambda_{S},\mu_{S})h^{2},
\end{multline}
where  $\mathcal{F}_{3}(\lambda_{S},\mu_{S}) := \max\{\mathfrak{R}(\lambda_{S},\mu_{S})^{2}, \mathfrak{C}(\lambda_{S},\mu_{S})^{2}\}$.

Finally, since  $\kappa_{h}^{(i)} \longrightarrow \kappa$ as $h\rightarrow 0$, then $\{\kappa_{h}^{(i)}\}_{h > 0}$ is a bounded sequence and hence, together with the  coercivity of  $a_{h}(\cdot, \cdot)$ on $\boldsymbol{\mathcal{V}}_{h}$ we obtain
\begin{equation}
\label{eq:Termbh}
b_{h}(\textbf{w}_{h}, \textbf{w}_{h})  = \dfrac{a_{h}(\textbf{w}_{h}, \textbf{w}_{h})}{\kappa_{h}^{(i)}} \geq \dfrac{C\|\textbf{w}_{h}\|_{1,\Omega}}{\kappa_{h}^{(i)}} \geq \widetilde{C} > 0.
\end{equation}
Hence, gathering \eqref{eq:TermT1}, \eqref{eq:TermT2}, \eqref{eq:TermT3},  \eqref{eq:Termbh}, and replacing these estimates in \eqref{eq:padra}  we  conclude the proof.
\end{proof}

\section{Numerical experiments}
\label{sec:numerics}
In the following section we present a number of numerical tests in order to assess the performance of the proposed method. The main goal is to observe the accuracy of the small edges approach for the elasticity spectral problem in different computational domains and boundary conditions. The results that we report have been obtained with a MATLAB code. 
Through this section we will consider different polygonal meshes allowing small edges (i.e., satisfying only Assumption \textbf{A1}) and different values of the Poison ratio $\nu$. This last parameter is important since the Lam\'e coefficients are computed with that aid of this parameter  according to the following definitions
\begin{equation*}
\displaystyle\mu_S=\frac{\Lambda}{2(1+\nu)}\quad\text{and}\quad\lambda_S=\frac{\Lambda\nu}{(1+\nu)(1-2\nu)},
\end{equation*}
where clearly $\lambda_S$ blows up when $\nu\rightarrow 1/2$. This will lead to a loss of order of convergence, as we expect.

We begin our tests considering a convex domain.
\subsection{Unit square} 
In this test the computational domain is $\O=(0,1)^2$ with null boundary conditions on $\partial\Omega$., i.e, $\boldsymbol{w}=\boldsymbol{0}$. To discretize this domain we consider polygonal meshes as the ones presented in Figure \ref{fig:meshesx}.
\begin{figure}[H]
	\begin{center}
			\centering\includegraphics[height=5cm, width=6cm]{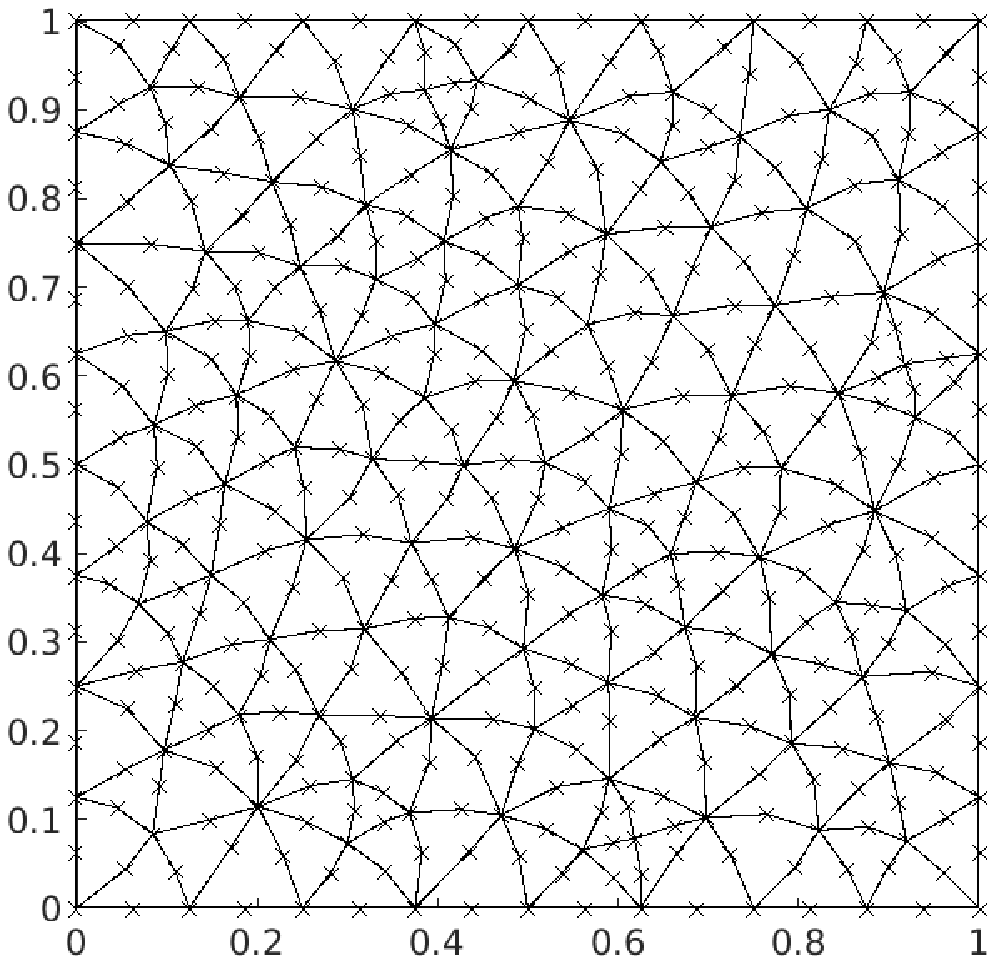}
			\centering\includegraphics[height=5cm, width=6cm]{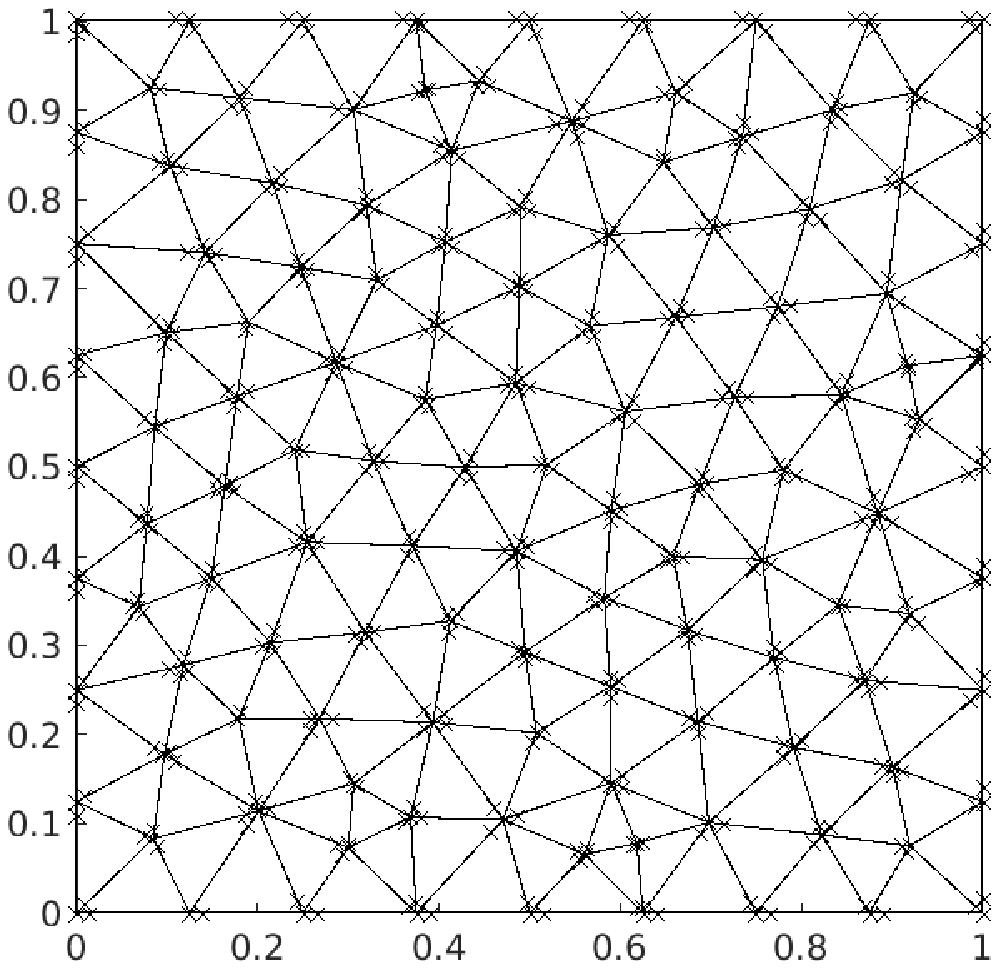}\\
			\centering\includegraphics[height=5cm, width=6cm]{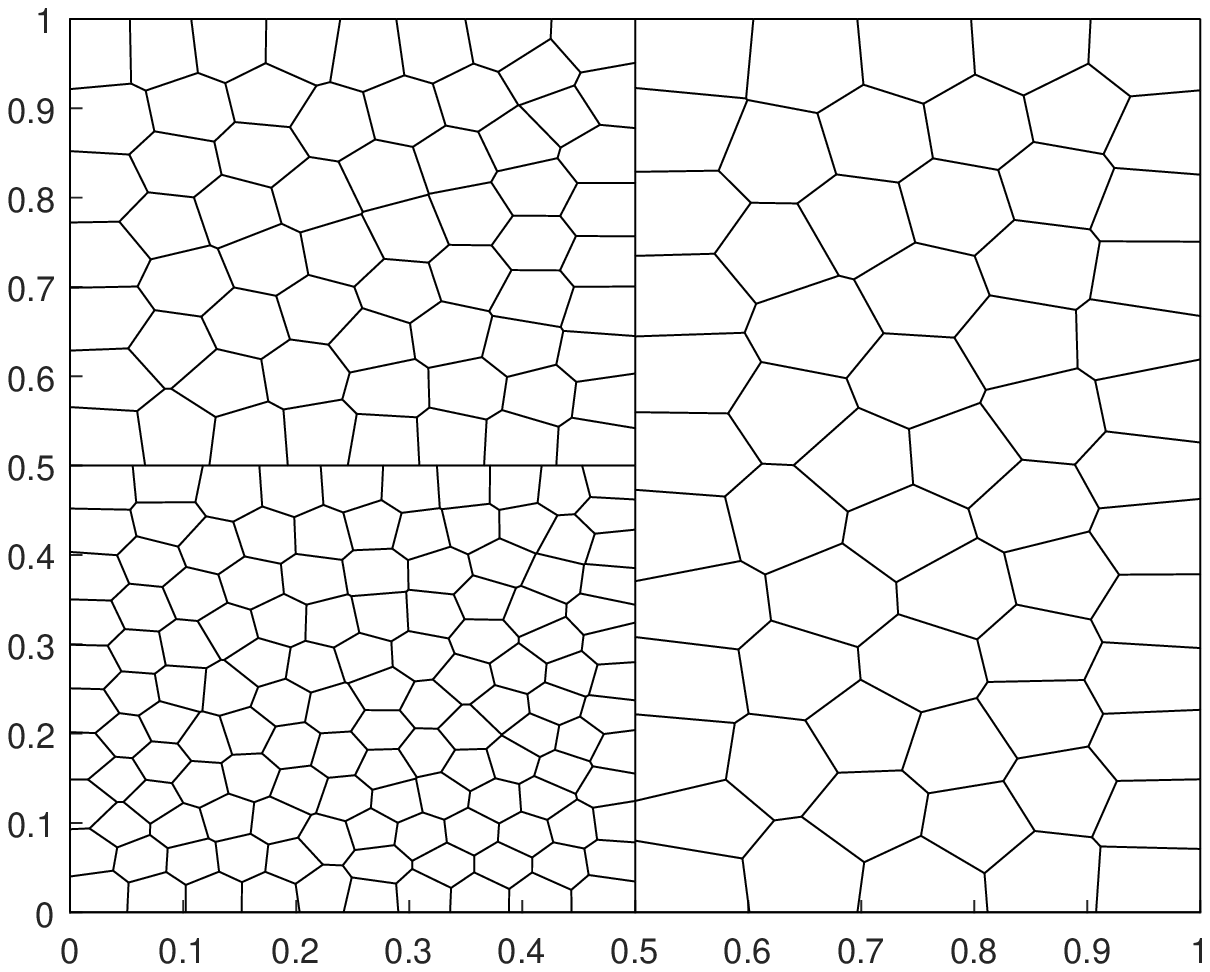}
		\caption{Sample of meshes. Top left: $\mathcal{T}_{h}^{1}$; Top right: $\mathcal{T}_{h}^{2}$; bottom: $\mathcal{T}_{h}^{3}$.
}
		\label{fig:meshesx}
	\end{center}
\end{figure}

Observe that $\mathcal{T}_{h}^{1}$ is such that the middle points allow to consider small edges, whereas $\mathcal{T}_{h}^{2}$ is the standard triangular mesh. The values of the Poisson ration along this   test are  $\nu \in \{0.35,\,0.49\}$.

We have  considered, for simplicity, Young's modulus  $\Lambda = 1$. Also we consider density   $\varrho = 1$. Finally, the stabilization term for this test is 
\begin{equation}\label{classicspec}
S(\textbf{w}_{h},\textbf{v}_{h}) = \alpha\displaystyle{\sum_{E \in \mathcal{T}_{h}} S^{E}(\textbf{w}_{h},\textbf{v}_{h})}, \quad S^{E}(\textbf{w}_{h},\textbf{v}_{h}) = \displaystyle{\sum_{i = 1}^{N_{E}} \textbf{w}_{h}(V_{i})\textbf{v}_{h}(V_{i})},
\end{equation}
where $\alpha := \text{tr}(a_{h}(\cdot,\cdot))/2$. The following tables show approximate values for each of the frequencies $\omega_{i} = \sqrt{\kappa_{i}}$, $i = 1,\ldots,4$, convergence orders and also the extrapolated frequencies, which are adjusted by least-squares by
\begin{equation*}
\omega_{hi} \approx \omega_{i} + C_{i}h^{\alpha_{i}}.
\end{equation*}
We will consider the mesh refinement $\mathrm{N}$ as the number of polygons on the boundary of the square.

\begin{table}[H]
\begin{center}
\begin{tabular}{|c|c|c|c|c|c|c|c|c|} \hline
 $\nu$ & $\omega_{hi}$ & N = 64 & N = 128 & N = 256 & N = 512 & Order & Ext. & \cite{inzunza2021displacementpseudostress} \\ \hline \hline
 \multirow{4}{*}{0.35} & $\omega_{h1}$ & 4.20193 & 4.19522 & 4.19364 & 4.19324 & 2.07 & 4.19313 & 4.19311 \\
 & $\omega_{h2}$ & 4.20261 & 4.19540 & 4.19369 & 4.19325 & 2.06 & 4.19313 & 4.19311 \\
 & $\omega_{h3}$ & 4.39728 & 4.37833 & 4.37373 & 4.37255 & 2.03 & 4.37220 & 4.37217 \\
 & $\omega_{h4}$ & 5.96461 & 5.94118 & 5.93518 & 5.93336 & 1.96 & 5.93309 & 5.93318 \\
\hline
 \multirow{4}{*}{0.49} & $\omega_{h1}$ & 4.32406 & 4.21865 & 4.19634 & 4.19030 & 2.19 & 4.18930 & 4.18858 \\
 & $\omega_{h2}$ & 5.79393 & 5.58095 & 5.53289 & 5.52130 & 2.13 & 5.51817 & 5.51758 \\
 & $\omega_{h3}$ & 5.81843 & 5.58834 & 5.53448 & 5.52161 & 2.09 & 5.51778 & 5.51758 \\
 & $\omega_{h4}$ & 7.08611 & 6.66311 & 6.57261 & 6.55020 & 2.19 & 6.54528 & 6.54337 \\ \hline
\end{tabular}
\end{center}
\caption{Four lowest approximated frequencies, convergence orders, and extrapolated frequencies, computed with  $\mathcal{T}_{h}^{1}$,  $\nu\in\{0.35,\, 0.49\}$, and the stabilization term defined in \eqref{classicspec}}
\label{tabla1}
\end{table}

\begin{table}[H]
\begin{center}
\begin{tabular}{|c|c|c|c|c|c|c|c|c|} \hline
 $\nu$ & $\omega_{hi}$ & N = 64 & N = 128 & N = 256 & N = 512 & Order & Ext. & \cite{inzunza2021displacementpseudostress} \\ \hline \hline
 \multirow{4}{*}{0.35} & $\omega_{h1}$ & 4.20293 & 4.19549 & 4.19371 & 4.19326 & 2.05 & 4.19313 & 4.19311 \\
 & $\omega_{h2}$ & 4.20311 & 4.19553 & 4.19372 & 4.19326 & 2.05 & 4.19313 & 4.19311 \\
 & $\omega_{h3}$ & 4.39907 & 4.37873 & 4.37385 & 4.37258 & 2.04 & 4.37222 & 4.37217 \\
 & $\omega_{h3}$ & 5.96675 & 5.94188 & 5.93535 & 5.93368 & 1.94 & 5.93308 & 5.93318 \\ 
\hline
 \multirow{4}{*}{0.49} & $\omega_{h1}$ & 4.32140 & 4.21722 & 4.19608 & 4.19017 & 2.23 & 4.18936 & 4.18858 \\
 & $\omega_{h2}$ & 5.78751 & 5.57972 & 5.53244 & 5.52112 & 2.13 & 5.51815 & 5.51758 \\
 & $\omega_{h3}$ & 5.80890 & 5.58319 & 5.53294 & 5.52116 & 2.16 & 5.51813 &  5.51758 \\
 & $\omega_{h4}$ & 7.08094 & 6.65902 & 6.57150 & 6.54973 & 2.23 & 6.54557 & 6.54337 \\ \hline
\end{tabular}
\end{center}
\caption{Four lowest approximated frequencies, convergence orders, and extrapolated frequencies, computed with  $\mathcal{T}_{h}^{2}$,  $\nu\in\{0.35,\, 0.49\}$, and the stabilization term defined in \eqref{classicspec}}
\label{tabla2}
\end{table}

\begin{table}[H]
\begin{center}
\begin{tabular}{|c|c|c|c|c|c|c|c|c|} \hline
 $\nu$ & $\omega_{hi}$ & N = 110 & N = 153 & N = 227 & N = 323 & Order & Ext. & \cite{inzunza2021displacementpseudostress} \\ \hline \hline
 \multirow{4}{*}{0.35} & $\omega_{h1}$ & 4.20404 & 4.19810 & 4.19550 & 4.19433 & 2.56 & 4.19376 & 4.19311 \\
 & $\omega_{h2}$ & 4.20423 & 4.19869 & 4.19567 & 4.19438 & 2.13 & 4.19330 & 4.19311 \\
 & $\omega_{h3}$ & 4.40139 & 4.38652 & 4.37913 & 4.37566 & 2.25 & 4.37335 & 4.37217 \\
 & $\omega_{h3}$ & 5.97081 & 5.95194 & 5.94205 & 5.93773 & 2.19 & 5.93441 & 5.93318 \\ 
\hline
 \multirow{4}{*}{0.49} & $\omega_{h1}$ & 4.43393 & 4.31258 & 4.25035 & 4.21936 & 2.13 & 4.19703 & 4.18858 \\
 & $\omega_{h2}$ & 6.00585 & 5.76804 & 5.64825 & 5.58221 & 2.07 & 5.53589 & 5.51758 \\
 & $\omega_{h3}$ & 6.01995 & 5.77483 & 5.64980 & 5.58379 & 2.09 & 5.53682 & 5.51758 \\
 & $\omega_{h4}$ & 7.45588 & 7.01811 & 6.78115 & 6.66172 & 2.00 & 6.56290 & 6.54337 \\ \hline
\end{tabular}
\end{center}
\caption{Four lowest approximated frequencies, convergence orders, and extrapolated frequencies, computed with  $\mathcal{T}_{h}^{3}$,  $\nu\in\{0.35,\, 0.49\}$, and the stabilization term defined in \eqref{classicspec}}
\label{tabla2x}
\end{table}

From Tables \ref{tabla1} and \ref{tabla2} we observe  that the method is capable of compute the frequencies on the square accurately. This is observed from the exotrapolated values that we present, which we compare with those obtained in \cite{inzunza2021displacementpseudostress}  with a mixed finite element method. Also, the  computed frequencies for the both Poisson ratios under consideration converge to the ones on the aforementioned reference independent of the polygonal mesh. In both cases, the quadratic order is attained by the method.

In Figures \ref{fig:plotsspec2x} and \ref{fig:plotsspec2z} we present plots of the first four eigenfunctions, which have been obtained for  $\nu=0.49$ and stabilization term \eqref{classicspec}.
\begin{figure}[H]
	\begin{center}
			\centering\includegraphics[height=6.3cm, width=7.2cm]{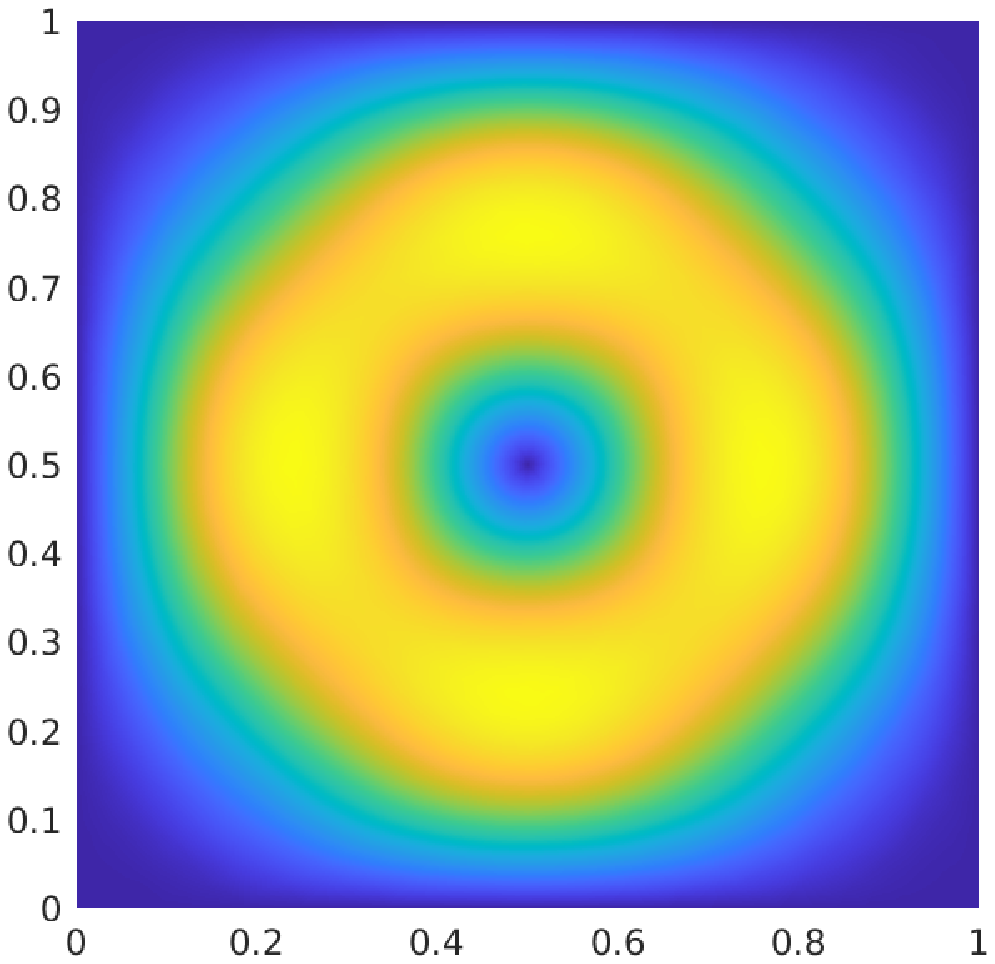}
			\centering\includegraphics[height=6.3cm, width=7.2cm]{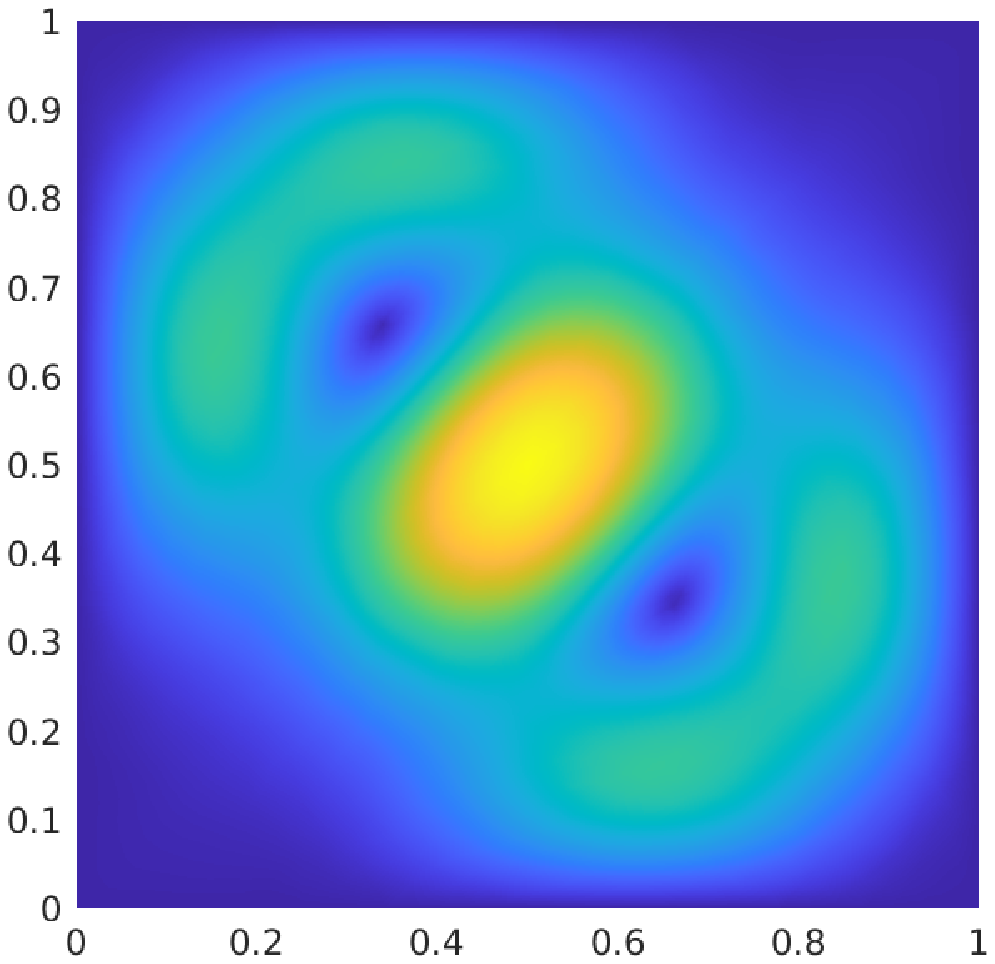}
					\caption{Plots of the first two eigenfunctions computed with $\nu =$ 0.49 and stabilization term \eqref{classicspec}. Left: $\textbf{w}_{h1}$; right: $\textbf{w}_{h2}$.
}
		\label{fig:plotsspec2x}
	\end{center}
\end{figure}
\begin{figure}[H]
\begin{center}
                          \centering\includegraphics[height=6.3cm, width=7.2cm]{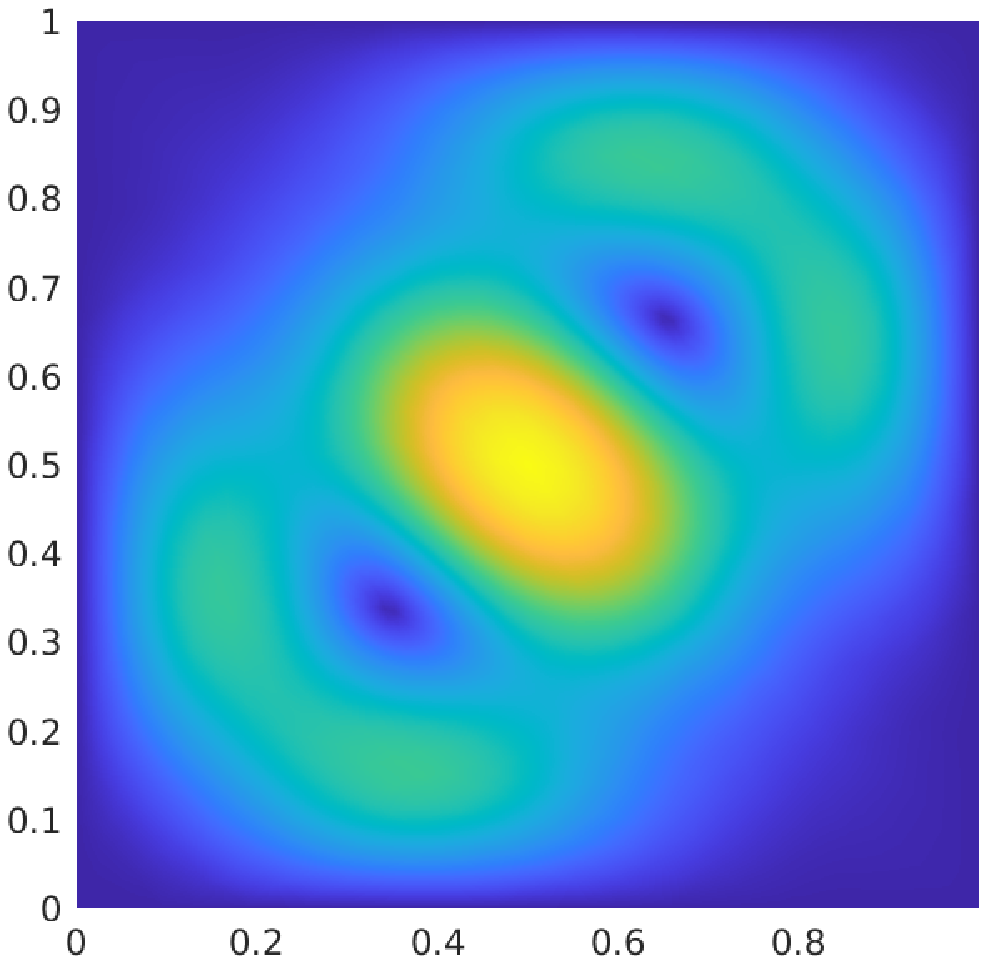}
                          \centering\includegraphics[height=6.3cm, width=7.2cm]{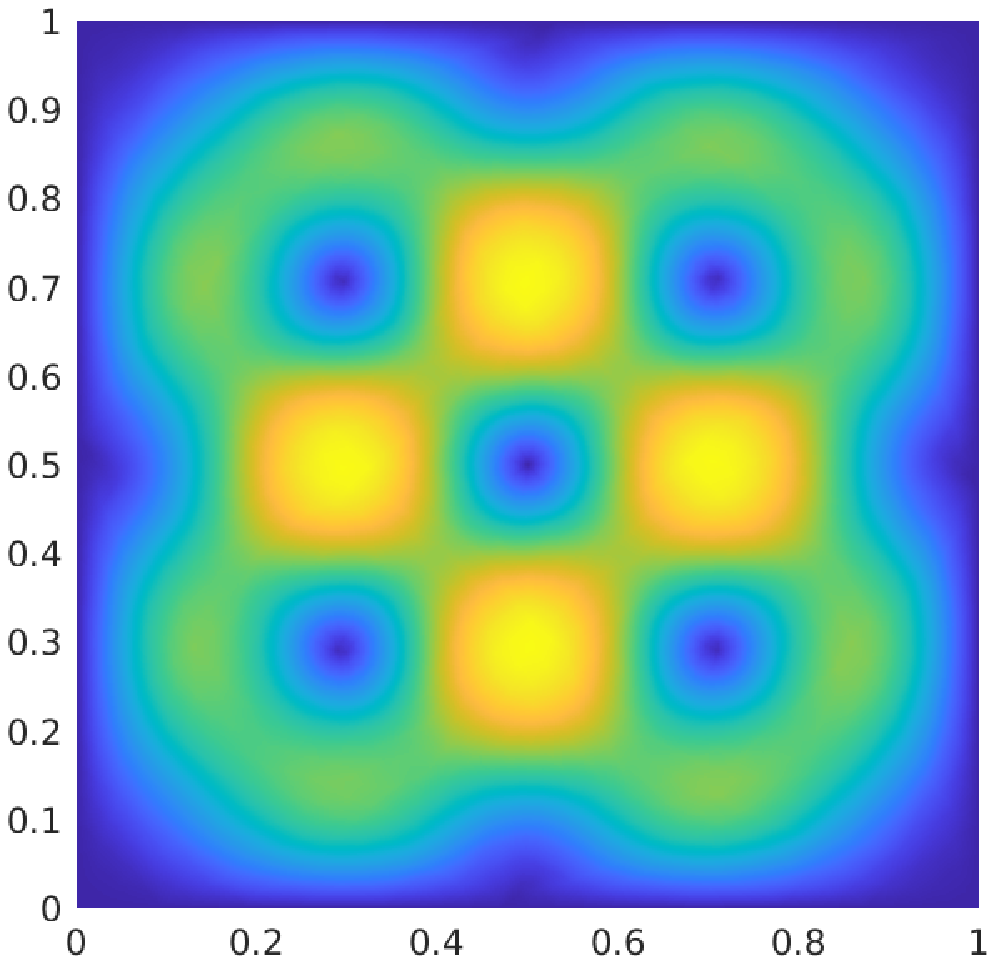}
		\caption{Plots of the third and fourth eigenfunctions computed with  $\nu =$ 0.49 and stabilization term \eqref{classicspec}. Left: $\textbf{w}_{h3}$; right: $\textbf{w}_{h4}$.
}
		\label{fig:plotsspec2z}
	\end{center}
\end{figure}

\subsection{Comparison between the stabilizations}
In order to observe the robustness of the VEM with small edges, we repeat the previous experiments using the following stabilization term
\begin{equation}\label{stabspec}
S(\textbf{w}_{h},\textbf{v}_{h}) = \alpha\displaystyle{\sum_{E \in \mathcal{T}_{h}} S^{E}(\textbf{w}_{h},\textbf{v}_{h})}, \quad S^{E}(\textbf{w}_{h},\textbf{v}_{h}) = h_{E}\displaystyle{\int_{E} \partial_{s}\textbf{w}_{h}\cdot\partial_{s}\textbf{v}_{h}},
\end{equation}
where $\alpha := \text{tr}(a_{h}(\cdot,\cdot))/2$. In the following tables are reported approximated values of each one of the frequencies $\omega_{i} = \sqrt{\kappa_{i}}$, $i = 1,\ldots,4$, convergence orders and extrapolated frequencies which, once again, we compare with the extrapolated ones obtained by \cite{inzunza2021displacementpseudostress} .

\begin{table}[H]
\begin{center}
\begin{tabular}{|c|c|c|c|c|c|c|c|c|} \hline
$\nu$ & $\omega_{hi}$ & N = 64 & N = 128 & N = 256 & N = 512 & Order & Ext. & \cite{inzunza2021displacementpseudostress} \\ \hline \hline
 \multirow{4}{*}{0.35} & $\omega_{h1}$ & 4.20599 & 4.19623 & 4.19390 & 4.19330 & 2.05 & 4.19313 & 4.19311 \\
 & $\omega_{h2}$ & 4.20822 & 4.19680 & 4.19405 & 4.19334 & 2.04 & 4.19313 & 4.19311 \\
 & $\omega_{h3}$ & 4.41110 & 4.38168 & 4.37461 & 4.37276 & 2.04 & 4.37225 & 4.37217 \\
 & $\omega_{h4}$ & 5.98094 & 5.94559 & 5.93631 & 5.93392 & 1.93 & 5.93303 & 5.93318 \\
\hline
 \multirow{4}{*}{0.49} & $\omega_{h1}$ & 4.44484 & 4.24687 & 4.20374 & 4.19202 & 2.15 & 4.18978 & 4.18858 \\
 & $\omega_{h2}$ & 6.04069 & 5.63787 & 5.54703 & 5.52479 & 2.13 & 5.51904 & 5.51758 \\
 & $\omega_{h3}$ & 6.09644 & 5.65738 & 5.55148 & 5.52583 & 2.05 & 5.51768 & 5.51758 \\
 & $\omega_{h4}$ & 7.52201 & 6.76970 & 6.60021 & 6.55686 & 2.12 & 6.54633 & 6.54337 \\ \hline
\end{tabular}
\end{center}
\caption{Four lowest approximated frequencies, convergence orders and extrapolated frequencies, computed with $\mathcal{T}_{h}^{1}$, $\nu\in\{0.35,\,0.49\}$ and stabilization term defined in \eqref{stabspec}.}
\label{tabla3}
\end{table}

\begin{table}[H]
\begin{center}
\begin{tabular}{|c|c|c|c|c|c|c|c|c|} \hline
 $\nu$ & $\omega_{hi}$ & N = 64 & N = 128 & N = 256 & N = 512 & Order & Ext. & \cite{inzunza2021displacementpseudostress} \\ \hline \hline
 \multirow{4}{*}{0.35} & $\omega_{h1}$ & 4.20547 & 4.19609 & 4.19386 & 4.19329 & 2.06 & 4.19313 & 4.19311 \\
&  $\omega_{h2}$ & 4.20577 & 4.19614 & 4.19389 & 4.19330 & 2.07 & 4.19314 & 4.19311 \\
&  $\omega_{h3}$ & 4.40638 & 4.38040 & 4.37430 & 4.37268 & 2.06 & 4.37226 & 4.37217 \\
&  $\omega_{h3}$ & 5.97514 & 5.94417 & 5.93594 & 5.93382 & 1.92 & 5.93302 & 5.93318 \\ 
\hline
 \multirow{4}{*}{0.49} & $\omega_{h1}$ & 4.36861 & 4.22781 & 4.19887 & 4.19074 & 2.22 & 4.18967 & 4.18858 \\
&  $\omega_{h2}$ & 5.88070 & 5.60200 & 5.53775 & 5.52235 & 2.11 & 5.51804 & 5.51758 \\
&  $\omega_{h3}$ & 5.91816 & 5.60777 & 5.53849 & 5.52240 & 2.16 & 5.51821 &  5.51758 \\
&  $\omega_{h4}$ & 7.26788 & 6.70126 & 6.58199 & 6.55196 & 2.21 & 6.54611 & 6.54337 \\ \hline
\end{tabular}
\end{center}
\caption{Four lowest approximated frequencies, convergence orders and extrapolated frequencies, computed with $\mathcal{T}_{h}^{2}$, $\nu\in\{0.35,\,0.49\}$ and stabilization term defined in \eqref{stabspec}.}
\label{tabla4}
\end{table}

\begin{table}[H]
\begin{center}
\begin{tabular}{|c|c|c|c|c|c|c|c|c|} \hline
 $\nu$ & $\omega_{hi}$ & N = 110 & N = 153 & N = 227 & N = 323 & Order & Ext. & \cite{inzunza2021displacementpseudostress} \\ \hline \hline
 \multirow{4}{*}{0.35} & $\omega_{h1}$ & 4.23044 & 4.21115 & 4.20240 & 4.19782 & 2.35 & 4.19539 & 4.19311 \\
&  $\omega_{h2}$ & 4.23264 & 4.21402 & 4.20269 & 4.19790 & 1.91 & 4.19276 & 4.19311 \\
&  $\omega_{h3}$ & 4.48507 & 4.42890 & 4.40000 & 4.38601 & 2.15 & 4.37590 & 4.37217 \\
&  $\omega_{h3}$ & 6.06226 & 6.00188 & 5.96658 & 5.95048 & 1.93 & 5.93472 & 5.93318 \\ 
\hline
 \multirow{4}{*}{0.49} & $\omega_{h1}$ & 5.12641 & 4.68477 & 4.44556 & 4.31929 & 1.96 & 4.21479 & 4.18858 \\
&  $\omega_{h2}$ & 7.23930 & 6.46664 & 6.03793 & 5.78512 & 1.80 & 5.55671 & 5.51758 \\
&  $\omega_{h3}$ & 7.33431 & 6.51547 & 6.05240 & 5.79728 & 1.83 & 5.56155 & 5.51758 \\
&  $\omega_{h4}$ & 9.61583 & 8.26681 & 7.46697 & 7.02478 & 1.73 & 6.56786 & 6.54337 \\ \hline
\end{tabular}
\end{center}
\caption{Four lowest approximated frequencies, convergence orders and extrapolated frequencies, computed with $\mathcal{T}_{h}^{3}$, $\nu\in\{0.35,\,0.49\}$ and stabilization term defined in \eqref{stabspec}.}
\label{tabla4x}
\end{table}
From Tables \ref{tabla3} and \ref{tabla4} we observe that there is no significant differences when the 
stabilization \eqref{classicspec} is changed by \eqref{stabspec}. In fact, the frequencies for the considered Poisson ratios and their extrapolated values are similar. Moreover, the order of convergence is not affected, and the quadratic order is attained perfectly.

\subsection{Nonconvex domain}
The aim of this test is to study the performance of the method in a nonconvex domain. Clearly this geometrical particularity goes beyond from our theoretical assumptions, where the theory is developed on a convex Lispchitz domain. However, computationally we can study the method in order to compare our results with those provided by other numerical  methods.  To do this task, we compute the four smallest frequencies $\omega_{hi}$, $i = 1, \ldots, 4$   
for the L-shaped domain defined by $\Omega := (0,2)^{2}\setminus [1,2)^{2}$. A sample of the meshes  to discretize this domain is presented in Figure \ref{fig:meshesxx}.
\begin{figure}[H]
	\begin{center}
			\centering\includegraphics[height=5cm, width=6cm]{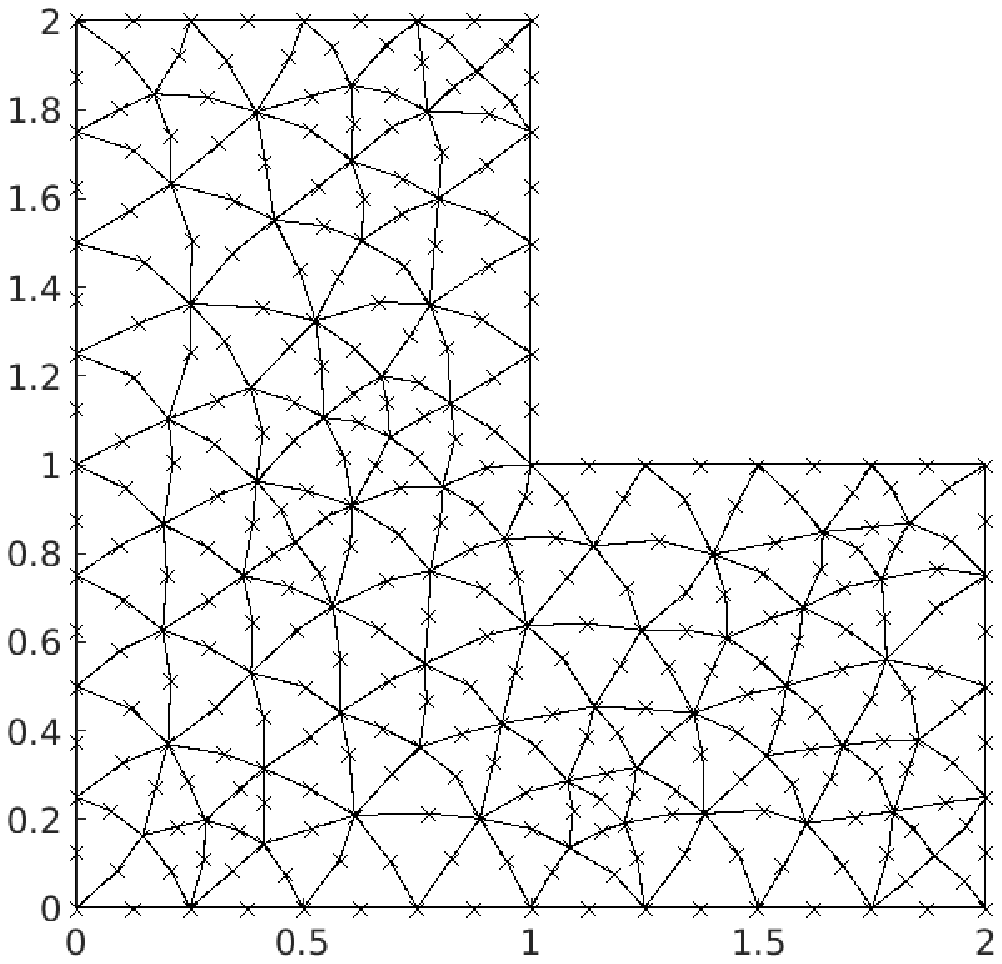}
			\centering\includegraphics[height=5cm, width=6cm]{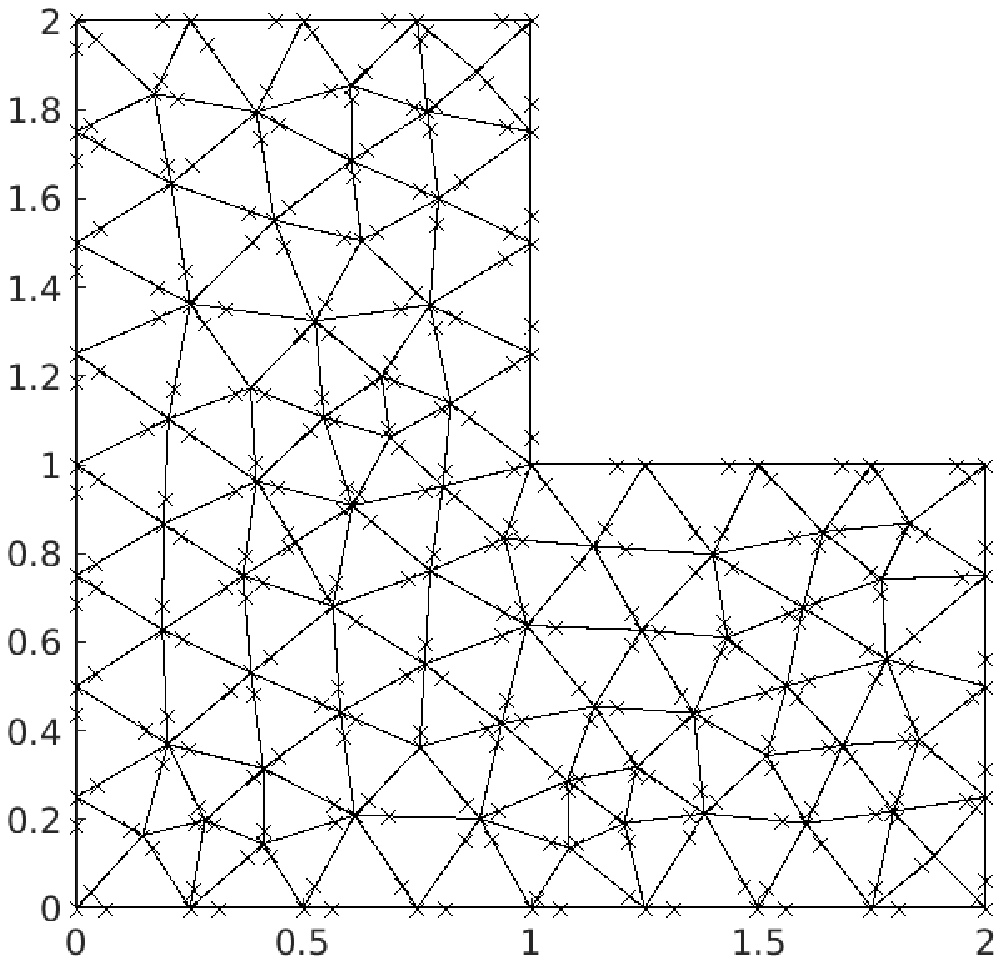}\\
		\caption{Sample of the meshes for the L-shaped domain. Left $\mathcal{T}_{h}^{1}$ (deformed triangles with middle points); right $\mathcal{T}_{h}^{2}$ (triangles with small edges).
}
		\label{fig:meshesxx}
	\end{center}
\end{figure}
In this test we consider the same physical parameters of the previous test, whereas the computed frequencies have been computed with the stabilization term  \eqref{classicspec}, which we scale with the parameter $\alpha := \text{tr}(a_{h}(\cdot,\cdot))/2$. Let us remark that $\mathrm{N}$ represents the number of polygons on the edge of the domain. 

\begin{table}[H]
\begin{center}
\begin{tabular}{|c|c|c|c|c|c|c|c|c|} \hline
$\nu$ & $\omega_{hi}$ & N = 64 & N = 128 & N = 256 & N = 512 & Order & Ext. & \cite{inzunza2021displacementpseudostress} \\ \hline \hline
 \multirow{4}{*}{0.35} & $\omega_{h1}$ & 2.39539 & 2.38512 & 2.38095 & 2.37971 & 1.40 & 2.37871 & 2.37768 \\
 & $\omega_{h2}$ & 2.81163 & 2.80183 & 2.79885 & 2.79805 & 1.75 & 2.79766 & 2.79726 \\
 & $\omega_{h3}$ & 3.33891 & 3.30138 & 3.28635 & 3.28221 & 1.43 & 3.27872 & 3.27876 \\
 & $\omega_{h4}$ & 3.67318 & 3.63581 & 3.62523 & 3.62262 & 1.85 & 3.62140 & 3.62146 \\
\hline
 \multirow{4}{*}{0.49} & $\omega_{h1}$ & 3.60728 & 3.37831 & 3.30437 & 3.28291 & 1.66 & 3.27169 & 3.26734 \\
 & $\omega_{h2}$ & 3.80074 & 3.58525 & 3.52727 & 3.51340 & 1.92 & 3.50750 & 3.50800 \\
 & $\omega_{h3}$ & 4.06885 & 3.80272 & 3.73812 & 3.72280 & 2.05 & 3.71780 & 3.71731 \\
 & $\omega_{h4}$ & 4.52351 & 4.15809 & 4.06992 & 4.04923 & 2.06 & 4.04251 & 4.04256 \\ \hline
\end{tabular}
\end{center}
\caption{Four lowest four computed frequencies, convergence orders and  extrapolated frequencies, computed with  $\mathcal{T}_{h}^{1}$  and the stabilization term \eqref{classicspec}.}
\end{table}

\begin{table}[H]
\begin{center}
\begin{tabular}{|c|c|c|c|c|c|c|c|c|} \hline
$\nu$ & $\omega_{hi}$ & N = 64 & N = 128 & N = 256 & N = 512 & Order & Ext. & \cite{inzunza2021displacementpseudostress} \\ \hline \hline
 \multirow{4}{*}{0.35} & $\omega_{h1}$ & 2.39589 & 2.38554 & 2.38118 & 2.37983 & 1.35 & 2.37870 & 2.37768\\
 & $\omega_{h2}$ & 2.81250 & 2.80218 & 2.79898 & 2.79809 & 1.72 & 2.79765 & 2.79726 \\
 & $\omega_{h3}$ & 3.34109 & 3.30285 & 3.28711 & 3.28261 & 1.40 & 3.27885 & 3.27876\\
 & $\omega_{h4}$ & 3.67592 & 3.63689 & 3.62556 & 3.62272 & 1.82 & 3.62137 & 3.62146\\
\hline
 \multirow{4}{*}{0.49} & $\omega_{h1}$ & 3.57514 & 3.37114 & 3.30296 & 3.28205 & 1.61 & 3.27102 & 3.26734 \\
 & $\omega_{h2}$ & 3.76980 & 3.57930 & 3.52619 & 3.51303 & 1.87 & 3.50720 & 3.50800\\
 & $\omega_{h3}$ & 4.04590 & 3.79714 & 3.73664 & 3.72242 & 2.05 & 3.71771 & 3.71731\\
 & $\omega_{h4}$ & 4.49042 & 4.15356 & 4.06853 & 4.04891 & 2.01 & 4.04165 & 4.04256 \\ \hline
\end{tabular}
\end{center}
\caption{Four lowest computed frequencies, convergence orders and  extrapolated frequencies, computed with $\mathcal{T}_{h}^{2}$ and the stabilization term \eqref{classicspec}.}
\end{table}
Again, we will repeat the previous experiments using the stabilization term \eqref{stabspec}, which will be compared with the results obtained previously. 

\begin{table}[H]
\begin{center}
\begin{tabular}{|c|c|c|c|c|c|c|c|c|} \hline
 $\nu$ & $\omega_{hi}$ & N = 64 & N = 128 & N = 256 & N = 512 & Order & Ext. & \cite{inzunza2021displacementpseudostress} \\ \hline \hline
 \multirow{4}{*}{0.35} & $\omega_{h1}$ & 2.40458 & 2.38790 & 2.38198 & 2.38014 & 1.54 & 2.37905 & 2.37768 \\
 & $\omega_{h2}$ & 2.81958 & 2.80397 & 2.79949 & 2.79823 & 1.80 & 2.79769 & 2.79726 \\
 & $\omega_{h3}$ & 3.36860 & 3.31037 & 3.28963 & 3.28360 & 1.55 & 3.27977 & 3.27876\\
 & $\omega_{h4}$ & 3.70036 & 3.64325 & 3.62715 & 3.62314 & 1.86 & 3.62137 & 3.62146\\
\hline
 \multirow{4}{*}{0.49} & $\omega_{h1}$ & 3.88680 & 3.45600 & 3.33252 & 3.29245 & 1.77 & 3.27835 & 3.26734\\
 & $\omega_{h2}$ & 4.06022 & 3.65189 & 3.54567 & 3.51791 & 1.94 & 3.50811 & 3.50800 \\
 & $\omega_{h3}$ & 4.34083 & 3.87448 & 3.75788 & 3.72789 & 1.99 & 3.71805 & 3.71731 \\
 & $\omega_{h4}$ & 4.71510 & 4.25546 & 4.09518 & 4.05553 & 1.61 & 4.02748 & 4.04256\\ \hline
\end{tabular}
\end{center}
\caption{Four lowest computed frequencies, convergence orders and  extrapolated frequencies, computed with $\mathcal{T}_{h}^{1}$ and the stabilization term \eqref{stabspec}.}
\end{table}

\begin{table}[H]
\begin{center}
\begin{tabular}{|c|c|c|c|c|c|c|c|c|} \hline
 $\nu$ & $\omega_{hi}$ & N = 64 & N = 128 & N = 256 & N = 512 & Order & Ext. & \cite{inzunza2021displacementpseudostress} \\ \hline \hline
 \multirow{4}{*}{0.35} & $\omega_{h1}$ & 2.40061 & 2.38695 & 2.38171 & 2.38004 & 1.44 & 2.37889 & 2.37768\\
 & $\omega_{h2}$ & 2.81689 & 2.80343 & 2.79932 & 2.79819 & 1.74 & 2.79765 & 2.79726 \\
 & $\omega_{h3}$ & 3.35684 & 3.30749 & 3.28878 & 3.28329 & 1.48 & 3.27940 & 3.27876 \\
 & $\omega_{h4}$ & 3.69019 & 3.64083 & 3.62658 & 3.62300 & 1.83 & 3.62135 & 3.62146 \\
\hline
 \multirow{4}{*}{0.49} & $\omega_{h1}$ & 3.66652 & 3.40006 & 3.31136 & 3.28502 & 1.62 & 3.27067 & 3.26734\\
 & $\omega_{h2}$ & 3.85633 & 3.60364 & 3.53214 & 3.51451 & 1.85 & 3.50625 & 3.50800 \\
 & $\omega_{h3}$ & 4.16307 & 3.82425 & 3.74311 & 3.72412 & 2.07 & 3.71807 & 3.71731 \\
 & $\omega_{h4}$ & 4.55870 & 4.19166 & 4.07763 & 4.05117 & 1.76 & 4.03554 & 4.04256 \\ \hline
\end{tabular}
\end{center}
\caption{Four lowest computed frequencies, convergence orders and  extrapolated frequencies, computed with $\mathcal{T}_{h}^{2}$ and the stabilization term \eqref{stabspec}.}
\end{table}

Finally in Figures \ref{fig:plotsspec22x} and \ref{fig:plotsspec22z} we present plots of the first four eigenfunctions obtained for  the L-shaped domain.
\begin{figure}[H]
	\begin{center}
			\centering\includegraphics[height=6.3cm, width=7.2cm]{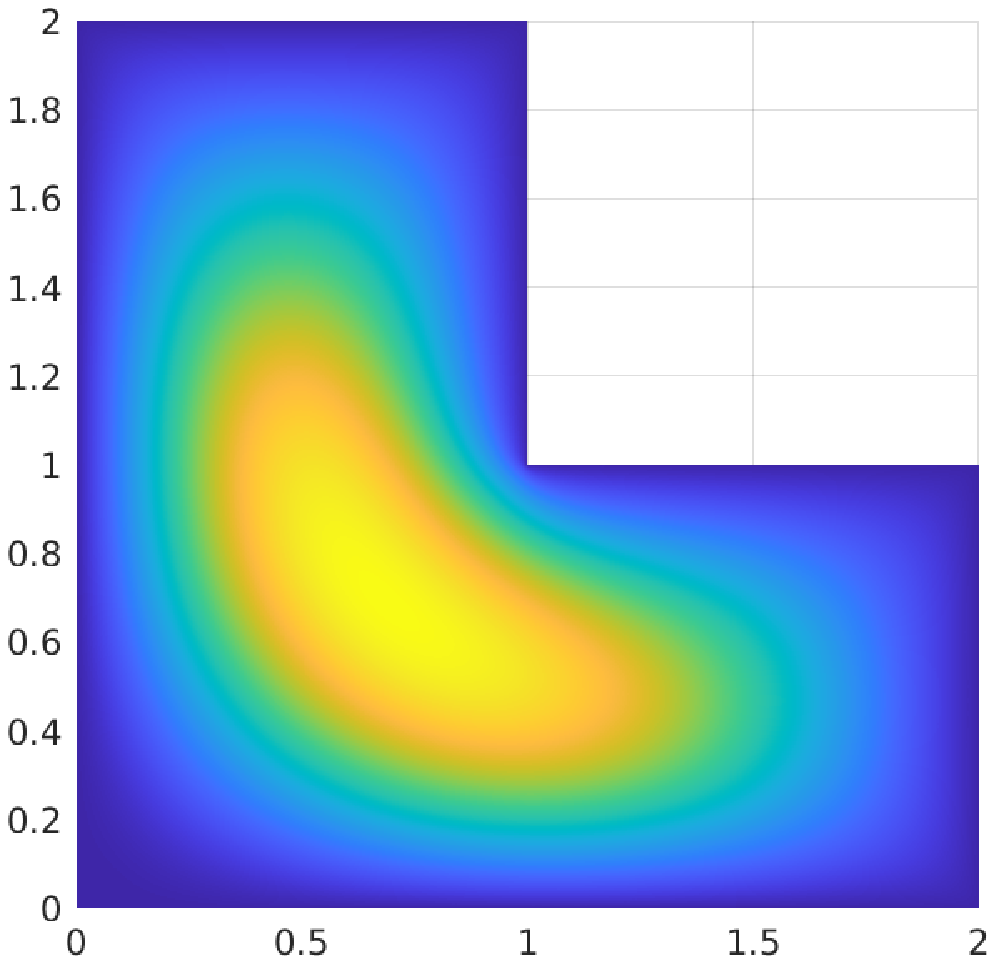}
			\centering\includegraphics[height=6.3cm, width=7.2cm]{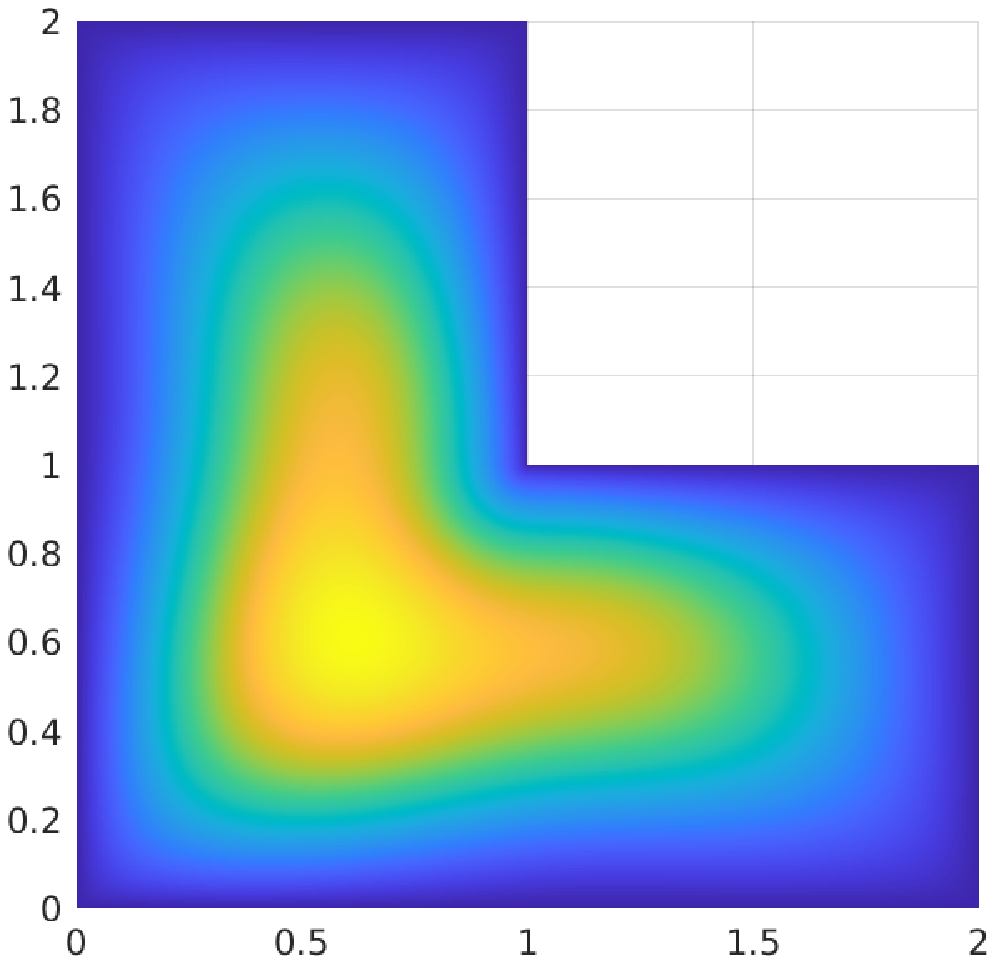}
					\caption{Plots of the first two eigenfunctions computed with  $\nu = 0.35$ and the stabilization term \eqref{stabspec}. Left: $\textbf{w}_{h1}$; right: $\textbf{w}_{h2}$.
}
		\label{fig:plotsspec22x}
	\end{center}
\end{figure}
\begin{figure}[H]
\begin{center}
                          \centering\includegraphics[height=6.3cm, width=7.2cm]{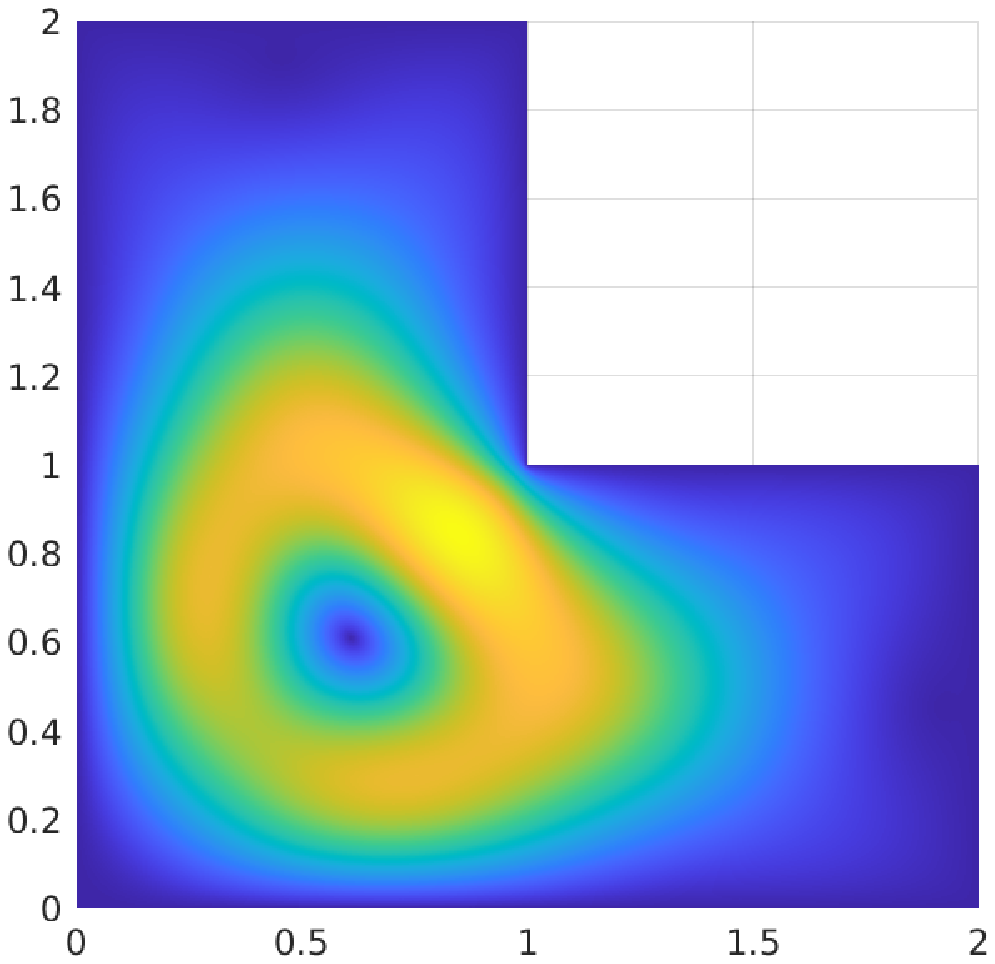}
                          \centering\includegraphics[height=6.3cm, width=7.2cm]{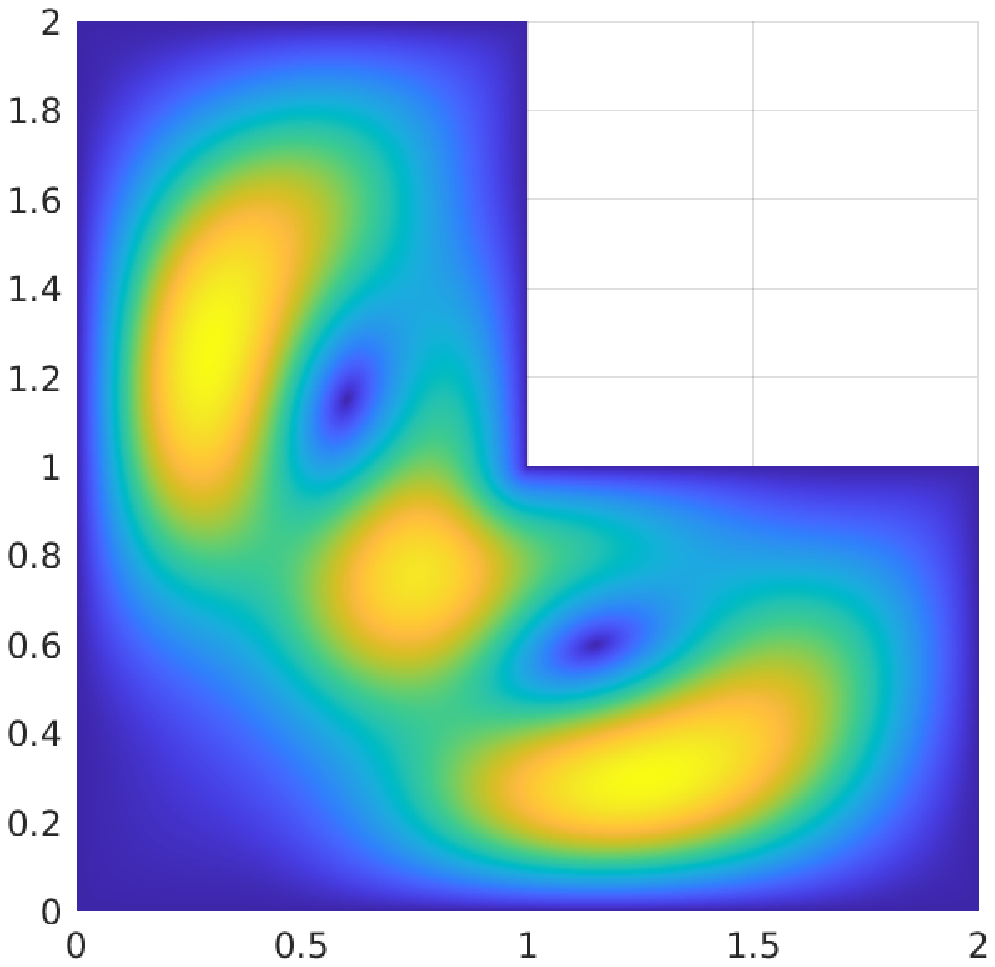}
		\caption{Plots of the third and fourth eigenfunctions computed with  $\nu = 0.35$ and the stabilization term \eqref{stabspec}. Left: $\textbf{w}_{h3}$; right: $\textbf{w}_{h4}$.
}
		\label{fig:plotsspec22z}
	\end{center}
\end{figure}

\subsection{Spurious analysis}

The aim of this test is to analyze the influence of the stabilization parameter of the VEM in the computation of the spectrum. Although the VEM is a robust method to approximate eigenvalues and eigenfunctions, it is well know that the  methods that depend on some parameter may introduce spurious  frequencies. We resort to the reader, for instance, to \cite{MR3962898, MR4229296,MR3340705} for methods that present this nature.

In order to observe more clearly the presence spurious frequencies, we will consider the elasticity spectral problem with mixed boundary conditions. More precisely, the problem of this test reads as follows: Find $\lambda\in\mathbb{R}$ and the displacement  $\textbf{w}$ such that  
\begin{equation}
\label{eq:mixed_boundary_system}
\left\{\begin{array}{cccc}
\textbf{div}(\boldsymbol{\sigma}(\textbf{w})) &=& -\varrho \kappa \textbf{w} \quad \text{in} \; \Omega, \\
\boldsymbol{\sigma}(\textbf{w})\textbf{n} &=&  0 \quad \text{on} \; \Gamma_{N}, \\
\textbf{w} &=& \boldsymbol{0} \quad \text{on} \; \Gamma_{D},
\end{array}\right.
\end{equation}
where  $\Gamma_{D}:= \{(x,0) : x \in (0,1)\}$ and  $\Gamma_{N}$ is the part of the boundary that is not clamped. We need to remark that this problem goes beyond the developed theory, since the regularity for the eigenfunctions under this geometrical configuration is such that $\bw\in\mathbf{H}^{1+s}(\O)$ with $s\in (0,s_{\O})$ and $s_{\O}>0$. Then,  according to \cite{MR3714637}, we need the additional assumption on the mesh:
\begin{itemize}
\item\textbf{A2} There exists $C\in\mathbb{N}$ such that  $N(E)\leq C$, where $N(E)$ represents the number of edges of some polygon $E\in\CT_h$.
\end{itemize}

With this assumption, together with assumption \textbf{A1}, it is possible to perform the analysis but depending on some constant that depends on the size of the mesh. More precisely, according to \cite{MR3714637},  the error estimate has the form 
\begin{equation*}
\|\textbf{u} - \textbf{u}_{h}\|_{1,\Omega} \lesssim \mathfrak{T}(\lambda_{S},\mu_{S})c(h)h^{s-1}|\textbf{u}|_{s,\Omega}, \quad c(h) := c(h) = \underset{E \in \mathcal{T}_{h}}{\max} \; \log\left(1 + \dfrac{h_{E}}{h_{m}(E)}\right),
\end{equation*}
where $h_{m}(E)$ is the smallest edge of the polygon $E$ and $\mathfrak{T}(\lambda_{S},\mu_{S})$ is a positive constant depending on the Lam\'e coefficients. This estimate  is not optimal since the constant $c(h)$ defined above  does not allow  to conclude the convergence in norm between the discrete and continuous solution operators. For this reason, we consider the elasticity problem with mixed boundary conditions only for computational purposes.

To perform the  test, we consider the stabilization term given by \eqref{classicspec} which we rescale  with the parameter  $\beta = 4^{k}$, $-3 \leq k \leq 3$. The meshes  are $\mathcal{T}_{h}^{1}$ and $\mathcal{T}_{h}^{2}$, whereas $\nu \in$ $\{$0.35, 0.45$\}$ and  $\mathrm{N} = 8$.

\begin{table}[H]
\label{tablaespureos1}
\begin{center}
\begin{tabular}{|c||c||c||c||c||c||c||c||c|} \hline
$\beta$ & 1/64 & 1/16 & 1/4 & 1 & 4 & 16 & 64 & $\omega_{i}^{ext.}$ \\ 
\cline{1-9}
$\omega_{h1}$ & 0.6370        & 0.6637         & 0.6769  & 0.6851 & 0.6898 & 0.6916 & 0.7391 & 0.6828 \\
$\omega_{h2}$ & 1.6702        & 1.6877         & 1.6975  & 1.7049 & 1.7095 & 1.7114 & 1.7596 & 1.7015 \\
$\omega_{h3}$ & 1.7519        & 1.7964         & 1.8189  & 1.8341 & 1.8431 & 1.8467 & 2.0362 & 1.8250 \\
$\omega_{h4}$ & 2.7404        & 2.8807         & 2.9388  & 2.9793 & 3.0046 & 3.0148 & 3.4483 & 2.9549 \\
$\omega_{h5}$ & \fbox{2.7954} & 2.9438         & 3.0105  & 3.0512 & 3.0753 & 3.0852 & 3.5296 & 3.0271 \\
$\omega_{h6}$ & \fbox{3.2270} & 3.3851         & 3.4434  & 3.4770 & 3.4979 & 3.5068 & 3.9973 & 3.4503 \\
$\omega_{h7}$ & \fbox{3.4950} & 3.9342         & 4.1230  & 4.2311 & 4.2884 & 4.3101 & 5.2523 & 4.1621 \\
$\omega_{h8}$ & \fbox{4.0069} & 4.4639         & 4.6144  & 4.7122 & 4.7710 & 4.7949 & 5.7256 & 4.6502 \\
$\omega_{h9}$ & \fbox{4.0666} & \fbox{4.5783}  & 4.7610  & 4.8677 & 4.9293 & 4.9539 & 6.2238 & 4.7831 \\
$\omega_{h10}$ & \fbox{4.1824} & \fbox{4.6201} & 4.7926  & 4.9005 & 4.9707 & 5.0011 & 6.2766 & 4.8130 \\ \hline
\end{tabular}
\end{center}
\caption{Computed eigenfrequencies  for  $\mathcal{T}_{h}^{1}$, $\mathrm{N} = 8$ and  $\nu = 0.35$.}
\end{table}

\begin{table}[H]
\label{tablaespureos3}
\begin{center}
\begin{tabular}{|c||c||c||c||c||c||c||c||c|} \hline
$\beta$ & 1/64 & 1/16 & 1/4 & 1 & 4 & 16 & 64 & $\omega_{i}^{ex}$ \\ 
\cline{1-9}
$\omega_{h1}$  & 0.6793 & 0.6812 & 0.6849 & 0.6972 & 0.7280 & 0.7912 & 0.8782 & 0.6828 \\
$\omega_{h2}$  & 1.7007 & 1.7020 & 1.7058 & 1.7170 & 1.7423 & 1.7908 & 1.8625 & 1.7015 \\
$\omega_{h3}$  & 1.8300 & 1.8337 & 1.8424 & 1.8686 & 1.9466 & 2.1412 & 2.5942 & 1.8250 \\
$\omega_{h4}$  & 2.9436 & 2.9478 & 2.9812 & 3.0773 & 3.2096 & 3.3744 & 3.8843 & 2.9549 \\
$\omega_{h5}$  & 3.0600 & 3.0703 & 3.0819 & 3.1232 & 3.3513 & 3.7429 & 4.3502 & 3.0271 \\
$\omega_{h6}$  & 3.4685 & 3.4745 & 3.5023 & 3.5915 & 3.8299 & 4.5374 & \fbox{5.1987} & 3.4503 \\
$\omega_{h7}$  & 4.2649 & 4.2668 & 4.3314 & 4.4319 & 4.6500 & 4.9312 & \fbox{6.2401} & 4.1621 \\
$\omega_{h8}$  & 4.6596 & 4.6742 & 4.7440 & 4.9272 & 5.2994 & 6.0221 & \fbox{6.5812} & 4.6502 \\
$\omega_{h9}$  & 4.8101 & 4.8400 & 4.9257 & 5.1326 & 5.7235 & \fbox{6.2680} & \fbox{7.6194} & 4.7831 \\
$\omega_{h10}$ & 4.8684 & 4.8868 & 4.9392 & 5.2301 & 5.7377 & \fbox{6.5411} & \fbox{8.5473} & 4.8130 \\ \hline
\end{tabular}
\end{center}
\caption{Computed eigenfrequencies for $\mathcal{T}_{h}^{2}$, $\mathrm{N}= 8$ and  $\nu = 0.35$.}
\end{table}

\begin{table}[H]
\label{tablaespureos2}
\begin{center}
\begin{tabular}{|c||c||c||c||c||c||c||c||c|} \hline
$\beta$ & 1/64 & 1/16 & 1/4 & 1 & 4 & 16 & 64 & $\omega_{i}^{ex}$ \\ 
\cline{1-9}
$\omega_{h1}$ & 0.6434         & 0.6725 & 0.6880 & 0.6989 & 0.7069 & 0.7119 & 0.7138 & 0.6967 \\
$\omega_{h2}$ & 1.7398         & 1.7709 & 1.7897 & 1.8055 & 1.8186 & 1.8268 & 1.8300 & 1.7996 \\
$\omega_{h3}$ & 1.7759         & 1.8194 & 1.8414 & 1.8572 & 1.8698 & 1.8776 & 1.8806 & 1.8481\\
$\omega_{h4}$ & 2.7400         & 2.8798 & 2.9433 & 2.9852 & 3.0182 & 3.0401 & 3.0493 & 2.9630\\
$\omega_{h5}$ & 2.8094         & 2.9497 & 3.0039 & 3.0425 & 3.0732 & 3.0924 & 3.1001 & 3.0212\\
$\omega_{h6}$ & \fbox{3.2658}  & 3.4747 & 3.5656 & 3.6282 & 3.6817 & 3.7186 & 3.7342 & 3.5849\\
$\omega_{h7}$ & \fbox{3.4872}  & 3.9152 & 4.0972 & 4.2019 & 4.2715 & 4.3122 & 4.3282 & 4.1386\\
$\omega_{h8}$ & \fbox{4.0111}  & 4.5186 & 4.7094 & 4.8230 & 4.9067 & 4.9571 & 4.9766 & 4.7379\\
$\omega_{h9}$ & \fbox{4.0763}  & \fbox{4.5663} & 4.7246 & 4.8361 & 4.9408 & 5.0152 & 5.0462 & 4.7485\\
$\omega_{h10}$ & \fbox{4.2360} & 4.8963 & 5.1495 & 5.3143 & 5.4419 & 5.5252 & 5.5594 & 5.1977 \\ \hline
\end{tabular}
\end{center}
\caption{Computed eigenfrequencies for $\mathcal{T}_{h}^{1}$, $\mathrm{N} = 8$ and  $\nu = 0.45$.}
\end{table}

\begin{table}[H]
\label{tablaespureos4}
\begin{center}
\begin{tabular}{|c||c||c||c||c||c||c||c||c|} \hline
$\beta$ & 1/64 & 1/16 & 1/4 & 1 & 4 & 16 & 64 & $\omega_{i}^{ex}$ \\ 
\cline{1-9}
$\omega_{h1}$  & 0.6910 & 0.6929 & 0.7009 & 0.7142 & 0.7459 & 0.8025 & 0.8671 & 0.6967 \\
$\omega_{h2}$  & 1.7917 & 1.7955 & 1.8078 & 1.8319 & 1.8937 & 2.0212 & 2.2613 & 1.7996 \\
$\omega_{h3}$  & 1.8521 & 1.8547 & 1.8644 & 1.8938 & 1.9765 & 2.1973 & 2.5647 & 1.8481 \\
$\omega_{h4}$  & 2.9895 & 2.9931 & 3.0152 & 3.0608 & 3.1659 & 3.3688 & 3.9960 & 2.9630 \\
$\omega_{h5}$  & 3.0092 & 3.0218 & 3.0497 & 3.1410 & 3.4073 & 4.1418 & 5.0916 & 3.0212 \\
$\omega_{h6}$  & 3.6046 & 3.6167 & 3.6642 & 3.8176 & 4.1364 & 4.7753 & \fbox{5.3750} & 3.5849 \\
$\omega_{h7}$  & 4.2547 & 4.2695 & 4.3106 & 4.4130 & 4.5875 & 4.9213 & \fbox{6.4913} & 4.1386 \\
$\omega_{h8}$  & 4.7347 & 4.7525 & 4.8695 & 5.1082 & 5.7296 & 6.9605 & \fbox{8.5122} & 4.7379 \\
$\omega_{h9}$  & 4.7961 & 4.8302 & 4.8863 & 5.1754 & 5.7952 & \fbox{7.0476} & \fbox{9.3058} & 4.7485 \\
$\omega_{h10}$ & 5.4019 & 5.4274 & 5.4936 & 5.6735 & 6.1776 & \fbox{7.2649} & \fbox{9.4293} & 5.1977 \\ \hline
\end{tabular}
\end{center}
\caption{Computed eigenfrequencies with $\mathcal{T}_{h}^{2}$, $\mathrm{N} = 8$ and  $\nu = 0.45$.}
\end{table}

In the case of $\mathcal{T}_{h}^{1}$ mesh, note that the spurious appears when $\beta = 1/64$, while in the case of $\mathcal{T}_{h}^{2}$ mesh, spurious appears when $\beta = 64$. In the following tables are report for each mesh, approximated frequencies for each refinement, with the aim of analyzing the presence of spurious. We denote $\mathrm{N}$ the number of polygons in one side of the square. 

\begin{table}[H]
\begin{center}
\begin{tabular}{|c|c|c|c|c|c|} \hline
$\nu$ & $\omega_{hi}$ & $\mathrm{N} = 8$ & $\mathrm{N} = 16$ & $\mathrm{N} = 32$ & $\mathrm{N} = 64$ \\ \hline \hline 
\multirow{10}{*}{0.35} & $\omega_{h1}$ & 0.6370 & 0.6669 & 0.6759 & 0.6791 \\
 & $\omega_{h2}$ & 1.6702 & 1.6877 & 1.6940 & 1.6974 \\
 & $\omega_{h3}$ & 1.7519 & 1.7990 & 1.8159 & 1.8208 \\
 & $\omega_{h4}$ & 2.7404 & 2.8854 & 2.9332 & 2.9442 \\
 & $\omega_{h5}$ & \fbox{2.7954} & 2.9472 & 2.9972 & 3.0118 \\
 & $\omega_{h6}$ & \fbox{3.2270} & 3.3922 & 3.4292 & 3.4399 \\
 & $\omega_{h7}$ & \fbox{3.4950} & \fbox{3.9227} & 4.1007 & 4.1301 \\
 & $\omega_{h8}$ & \fbox{4.0069} & \fbox{4.4857} & 4.5878 & 4.6207 \\
 & $\omega_{h9}$ & \fbox{4.0666} & \fbox{4.5951} & 4.7167 & 4.7511 \\
 & $\omega_{h10}$ & \fbox{4.1824} & \fbox{4.6364} & 4.7458 & 4.7763 \\ \hline
 \hline 
\multirow{10}{*}{0.45} & $\omega_{h1}$ & 0.6434 & 0.6747 & 0.6854 & 0.6895 \\
& $\omega_{h2}$ & 1.7398 & 1.7700 & 1.7823 & 1.7889 \\
& $\omega_{h3}$ & 1.7759 & 1.8221 & 1.8385 & 1.8433 \\
& $\omega_{h4}$ & 2.7400 & 2.8813 & 2.9278 & 2.9424 \\
& $\omega_{h5}$ & 2.8094 & 2.9515 & 2.9986 & 3.0094 \\
& $\omega_{h6}$ & \fbox{3.2658} & 3.4786 & 3.5398 & 3.5597 \\
& $\omega_{h7}$ & \fbox{3.4872} & \fbox{3.9011} & 4.0738 & 4.1024 \\
& $\omega_{h8}$ & \fbox{4.0111} & \fbox{4.5386} & 4.6616 & 4.6990 \\ 
& $\omega_{h9}$ & \fbox{4.0763} & \fbox{4.5718} & 4.6723 & 4.7012 \\
& $\omega_{h10}$ & \fbox{4.2360} & \fbox{4.9083} & 5.0910 & 5.1417 \\ \hline
\end{tabular}
\end{center}
\caption{First ten approximated frequencies  for  $\mathcal{T}_{h}^{1}$ and  $\beta = 1/64$.}
\end{table}

\begin{table}[H]
\begin{center}
\begin{tabular}{|c|c|c|c|c|c|} \hline
$\nu$ & $\omega_{hi}$ & $\mathrm{N} = 8$ & $\mathrm{N} = 16$ & $\mathrm{N} = 32$ & $\mathrm{N} = 64$ \\ \hline \hline 
\multirow{10}{*}{0.35} & $\omega_{h1}$ & 0.8782 & 0.7867 & 0.7247 & 0.6966 \\
 & $\omega_{h2}$ & 1.8625 & 1.7847 & 1.7376 & 1.7146 \\
 & $\omega_{h3}$ & 2.5942 & 2.1211 & 1.9213 & 1.8524 \\
 & $\omega_{h4}$ & 3.8843 & 3.3080 & 3.1449 & 3.0641 \\
 & $\omega_{h5}$ & 4.3502 & 3.7277 & 3.3147 & 3.0691 \\
 & $\omega_{h6}$ & \fbox{5.1987} & \fbox{4.4298} & 3.7518 & 3.5443 \\
 & $\omega_{h7}$ & \fbox{6.2401} & \fbox{4.7142} & 4.4244 & 4.2589 \\
 & $\omega_{h8}$ & \fbox{6.5812} & 5.9087 & 5.1695 & 4.8444 \\
 & $\omega_{h9}$ & \fbox{7.6194} & 6.1201 & 5.5569 & 4.9985 \\
 & $\omega_{h10}$ & \fbox{8.5473} & \fbox{6.4273} & 5.5702 & 5.1205 \\ \hline
 \hline 
\multirow{10}{*}{0.45} & $\omega_{h1}$ & 0.8671 & 0.8004 & 0.7436 & 0.7138 \\
& $\omega_{h2}$ & 2.2613 & 2.0133 & 1.8837 & 1.8285 \\
& $\omega_{h3}$ & 2.5647 & 2.1753 & 1.9564 & 1.8777 \\
& $\omega_{h4}$ & 3.9960 & 3.3243 & 3.0973 & 3.0092 \\
& $\omega_{h5}$ & 5.0916 & 4.1074 & 3.3905 & 3.1247 \\
& $\omega_{h6}$ & \fbox{5.3750} & \fbox{4.6784} & 4.0354 & 3.7370 \\
& $\omega_{h7}$ & \fbox{6.4913} & \fbox{4.7309} & 4.3834 & 4.2245 \\
& $\omega_{h8}$ & \fbox{8.5122} & 6.6766 & 5.5418 & 4.9659 \\ 
& $\omega_{h9}$ & \fbox{9.3058} & 6.9063 & 5.6267 & 5.0895 \\
& $\omega_{h10}$ & \fbox{9.4293} & \fbox{6.9771} & 5.8199 & 5.3521 \\ \hline
\end{tabular}
\end{center}
\caption{First ten approximated frequencies  for  $\mathcal{T}_{h}^{2}$ and  $\beta = 64$.}
\end{table}

Finally, for each stabilization parameter $\beta$, we perform the  analysis of convergence orders using $\mathcal{T}_{h}^{1}$ mesh, with $\nu=0.45$. Ee remark that the results for other Poisson ratios are similar, and hence we do not include it. In the following tables we report approximated frequencies, convergence orders and extrapolated frequencies. Once again, $\mathrm{N}$ denotes the number of polygons in one side of the square. 

\begin{table}[H]
\begin{center}
\begin{tabular}{|c|c|c|c|c|c|c|c|} \hline
$\beta$ & $\omega_{hi}$ & $\mathrm{N} = 8$ & $\mathrm{N} = 16$ & $\mathrm{N} = 32$ & $\mathrm{N} = 64$ & Orden & Extrap. \\ \hline \hline
\multirow{4}{*}{$\dfrac{1}{64}$} & $\omega_{h1}$ & 0.6434 & 0.6747 & 0.6854 & 0.6895 & 1.51 & 0.6915 \\
& $\omega_{h2}$ & 1.7398 & 1.7700 & 1.7823 & 1.7890 & 1.18 & 1.7933 \\ 
& $\omega_{h3}$ & 1.7759 & 1.8221 & 1.8385 & 1.8433 & 1.55 & 1.8463 \\
& $\omega_{h4}$ & 2.7400 & 2.8813 & 2.9278 & 2.9424 & 1.62 & 2.9497 \\ \hline \hline
\multirow{4}{*}{$\dfrac{1}{16}$} & $\omega_{h1}$ & 0.6725 & 0.6850 & 0.6894 & 0.6911 & 1.47 & 0.6920 \\ 
& $\omega_{h2}$ & 1.7709 & 1.7832 & 1.7886 & 1.7914 & 1.13 & 1.7935 \\
& $\omega_{h3}$ & 1.8194 & 1.8368 & 1.8426 & 1.8442 & 1.64 & 1.8451 \\
& $\omega_{h4}$ & 2.8798 & 2.9256 & 2.9416 & 2.9469 & 1.54 & 2.9498 \\ \hline \hline
\multirow{4}{*}{$\dfrac{1}{4}$} & $\omega_{h1}$ & 0.6880 & 0.6908 & 0.6918 & 0.6921 & 1.56 & 0.6923 \\
& $\omega_{h2}$ & 1.7897 & 1.7913 & 1.7923 & 1.7928 & 0.79 & 1.7936 \\
& $\omega_{h3}$ & 1.8414 & 1.8435 & 1.8444 & 1.8446 & 1.40 & 1.8448 \\
& $\omega_{h4}$ & 2.9433 & 2.9469 & 2.9486 & 2.9493 & 1.14 & 2.9499 \\ \hline \hline
\multirow{4}{*}{$1$} & $\omega_{h1}$ & 0.6989 & 0.6952 & 0.6936 & 0.6929 & 1.18 & 0.6923 \\
& $\omega_{h2}$ & 1.8055 & 1.7977 & 1.7950 & 1.7939 & 1.46 & 1.7933 \\
& $\omega_{h3}$ & 1.8572 & 1.8478 & 1.8454 & 1.8449 & 2.01 & 1.8447 \\
& $\omega_{h4}$ & 2.9852 & 2.9607 & 2.9532 & 2.9509 & 1.70 & 2.9499 \\ \hline \hline
\multirow{4}{*}{$4$} & $\omega_{h1}$ & 0.7069 & 0.6987 & 0.6950 & 0.6934 & 1.17 & 0.6921 \\
& $\omega_{h2}$ & 1.8186 & 1.8032 & 1.7970 & 1.7947 & 1.35 & 1.7931 \\ 
& $\omega_{h3}$ & 1.8698 & 1.8512 & 1.8463 & 1.8451 & 1.93 & 1.8446 \\
& $\omega_{h4}$ & 3.0182 & 2.9720 & 2.9569 & 2.9522 & 1.63 & 2.9499 \\ \hline \hline
\multirow{4}{*}{$16$} & $\omega_{h1}$ & 0.7119 & 0.7010 & 0.6959 & 0.6938 & 1.14 & 0.6918 \\
& $\omega_{h2}$ & 1.8268 & 1.8068 & 1.7983 & 1.7952 & 1.29 & 1.7928 \\ 
& $\omega_{h3}$ & 1.8776 & 1.8534 & 1.8469 & 1.8453 & 1.90 & 1.8446 \\
& $\omega_{h4}$ & 3.0401 & 2.9796 & 2.9593 & 2.9530 & 1.60 & 2.9497 \\ \hline \hline
\multirow{4}{*}{$64$} & $\omega_{h1}$ & 0.7138 & 0.7019 & 0.6962 & 0.6939 & 1.12 & 0.6916 \\
& $\omega_{h2}$ & 1.8300 & 1.8083 & 1.7988 & 1.7954 & 1.27 & 1.7926 \\
& $\omega_{h3}$ & 1.8806 & 1.8543 & 1.8471 & 1.8453 & 1.89 & 1.8446 \\ 
& $\omega_{h4}$ & 3.0493 & 2.9828 & 2.9604 & 2.9534 & 1.59 & 2.9496 \\ \hline
\end{tabular}
\end{center}
\caption{Lowest four approximated frequencies and convergence orders for  $\mathcal{T}_{h}^{1}$ and $\nu =$ 0.45.}
\end{table}

\subsection{Orders of convergence}

Now we are interested in the computation of convergence orders for the eigenvalues of problem
\eqref{eq:mixed_boundary_system}.
For the computation of the spectrum we consider  \eqref{classicspec} as stabilization term,  which we have scaled with the parameter  $\alpha := \text{tr}(a_{h}(\cdot,\cdot))/2$. The meshes for this test are the following:
\begin{itemize}
\item $\mathcal{T}_{h}^{1}$: Deformed triangles with middle points,
\item $\mathcal{T}_{h}^{2}$: Deformed squares.
\end{itemize}


For this test in particular we consider the physical parameters of steal: Young modulus $\Lambda =$ 1.44 $\times 10^{11}$ Pa  and density $\varrho =$ 7.7 $\times 10^{3}$ $kg/m^{3}$. Also, as Poisson ratio we consider  $\nu = 0.35$. On the other hand,  to perform the numerical method, we consider as $N$ the number of polygons that yield on the clamped side of the square.

In Table \ref{tabla_order_mixed} we report the computed eigenfrequencies for $\mathcal{T}_{h}^{1}$, $\mathcal{T}_{h}^{1}$, and different refinement parameter, together with the corresponding extrapolated frequencies and the extrapolated values obtained in  \cite{MR4050542} for a standard VEM.

\begin{table}[H]
\begin{center}
\begin{tabular}{|c|c|c|c|c|c|c|c|c|} \hline

& Mesh & $\mathrm{N} = 16$ & $\mathrm{N} = 32$ & $\mathrm{N} = 64$ & $\mathrm{N} = 128$ & Order & Ext. & \cite{MR4050542} \\ \hline
\hline
$\omega_{h1}$ & \multirow{6}{*}{$\mathcal{T}_{h}^{1}$} & 2957.193 & 2949.107 & 2946.023 & 2944.964 & 1.42 & 2944.259 & 2944.387 \\
$\omega_{h2}$ & & 7363.191 & 7354.174 & 7350.750 & 7349.555 & 1.42 & 7348.775 & 7348.674 \\
$\omega_{h3}$ & & 7902.414 & 7885.866 & 7881.655 & 7880.587 & 1.98 & 7880.231 & 7879.746 \\
$\omega_{h4}$ & & 12805.665 & 12761.802 & 12750.971 & 12748.230 & 2.01 & 12747.348 & 12746.013 \\
$\omega_{h5}$ & & 13119.363 & 13071.579 & 13057.764 & 13053.773 & 1.79 & 13052.114 & 13051.220 \\
$\omega_{h6}$ & & 14948.578 & 1.4905.390 & 14894.439 & 14891.575 & 1.97 & 14890.626 & 14889.584 \\ \hline
$\omega_{h1}$ & \multirow{6}{*}{$\mathcal{T}_{h}^{2}$} & 2987.630 & 2960.174 & 2949.954 & 2946.460 & 1.45 & 2944.256 & 2944.387 \\
$\omega_{h2}$ & & 7394.495 & 7366.040 & 7355.148 & 7351.258 & 1.41 & 7348.770 & 7348.674 \\
$\omega_{h3}$ & & 7971.756 & 7904.912 & 7886.531 & 7881.835 & 1.88 & 7879.896 & 7879.746 \\
$\omega_{h4}$ & & 13041.333 & 12823.521 & 12766.678 & 12752.180 & 1.94 & 12746.809 & 12746.013 \\
$\omega_{h5}$ & & 13256.318 & 13115.099 & 13071.314 & 13058.141 & 1.70 & 13052.125 & 13051.220 \\
$\omega_{h6}$ & & 15185.872 & 14968.304 & 14910.842 & 14895.837 & 1.92 & 14890.267 & 14889.584 \\ \hline
\end{tabular}
\end{center}
\caption{\label{tabla_order_mixed} Lowest six  approximated frequencies and convergence orders for the elasticity spectral problem with the parameters of steal.}
\end{table}

Finally, in Figures \ref{fig:meshesxxxx} and \ref{fig:meshesxxxxy} we present plots that represent some of the eigenfunctions for the elasticity eigenproblem with mixed boundary conditions.

\begin{figure}[H]
	\begin{center}
			\centering\includegraphics[height=6.8cm, width=7.4cm]{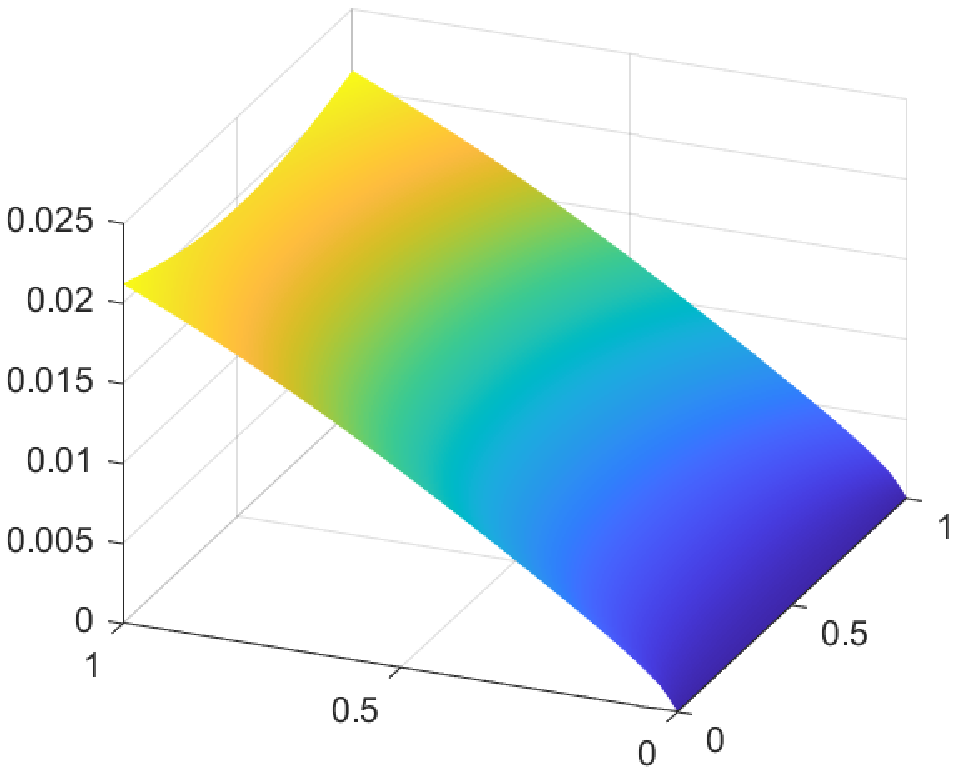}
			\centering\includegraphics[height=6.8cm, width=7.4cm]{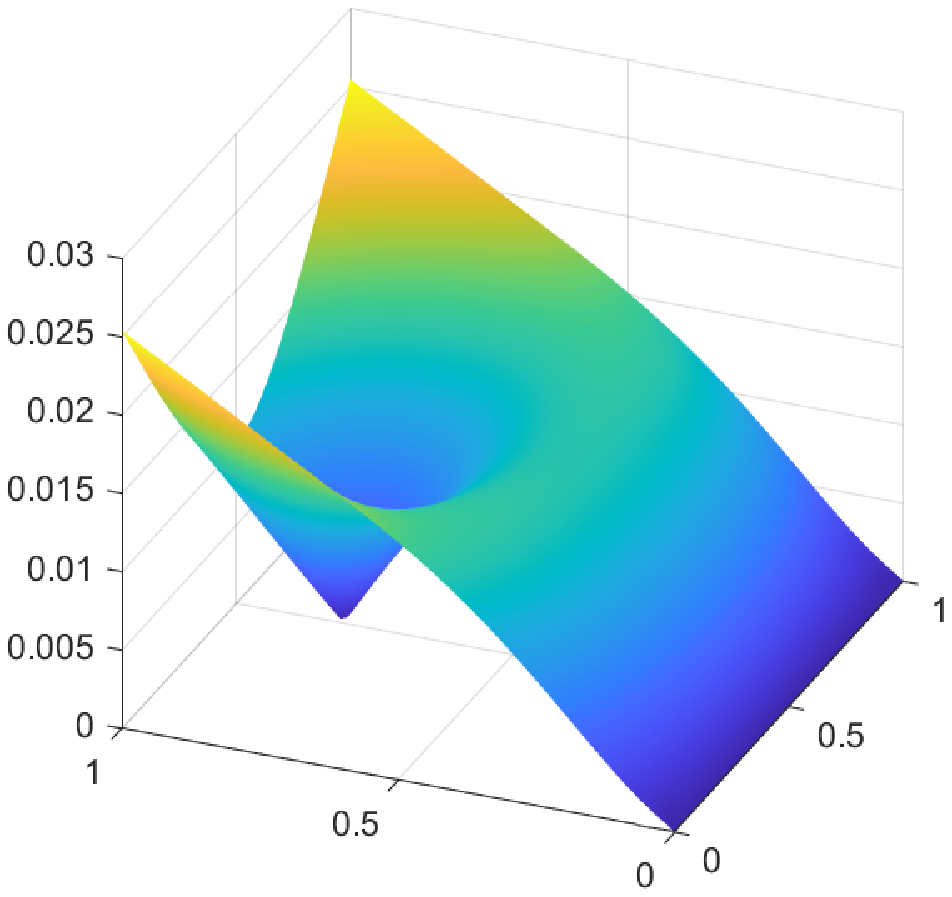}
					\caption{Plots of some approximated eigenfunctions. Left: $\textbf{w}_{h1}$; right: $\textbf{w}_{h3}$.}
		\label{fig:meshesxxxx}
	\end{center}
\end{figure}
\begin{figure}[H]
\begin{center}
			\centering\includegraphics[height=6.8cm, width=7.4cm]{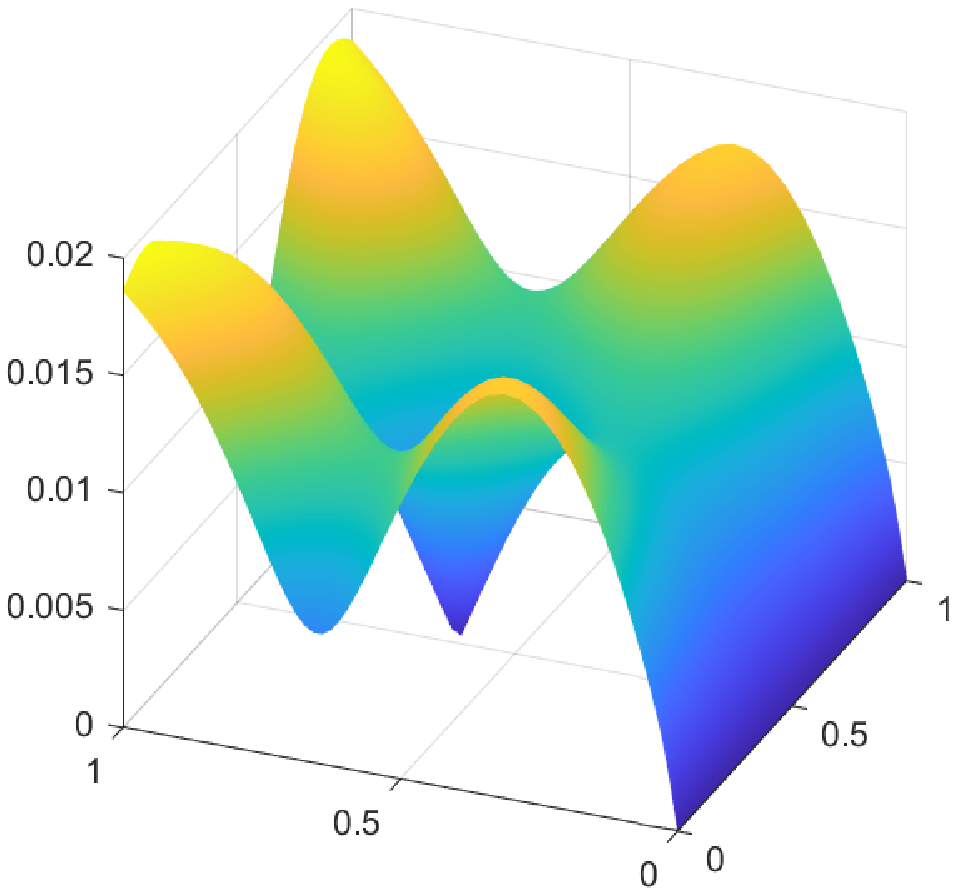}
			\centering\includegraphics[height=6.8cm, width=7.4cm]{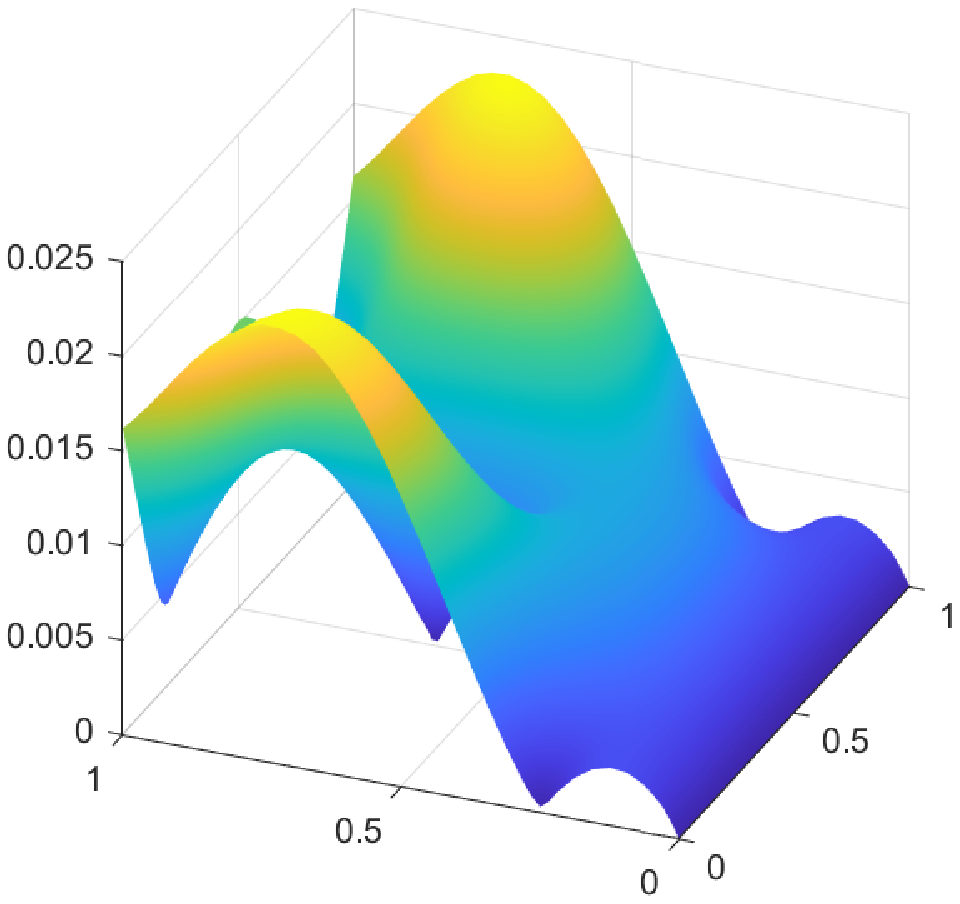}
		\caption{Plots of some approximated eigenfunctions. Left: $\textbf{w}_{h5}$; right: $\textbf{w}_{h6}$.}
		\label{fig:meshesxxxxy}
	\end{center}
\end{figure}

\bibliographystyle{siam}
\footnotesize
\bibliography{AmLeRi_eigen}

\end{document}